\apptocmd{\sloppy}{\hbadness 10000\relax}{}{}
\newtheorem{theorem}{Theorem}[section]
\newtheorem{lemma}[theorem]{Lemma}
\newtheorem{proposition}[theorem]{Proposition}
\newtheorem{cor}[theorem]{Corollary}
\newtheorem{question}[theorem]{Question}
\theoremstyle{definition}
\newtheorem{definition}[theorem]{Definition}
\newtheorem{example}[theorem]{Example}
\theoremstyle{remark}
\newtheorem{remark}[theorem]{Remark}
\newcommand{\CC}{\mathbb{C}}
\newcommand{\NN}{\mathbb{N}}
\newcommand{\RR}{\mathbb{R}}
\newcommand{\QQ}{\mathbb{Q}}
\newcommand{\udisc}{\mathbb{D}}
\newcommand{\boundary}{\partial}
\newcommand{\aut}{\mathop{\mathrm{Aut}}}
\newcommand{\id}{\mathop{\mathrm{id}}}
\newcommand{\Olo}{\mathcal{O}}
\newcommand{\slgrp}{\mathrm{SL}}
\newcommand{\slalg}{\mathfrak{sl}}
\newcommand{\rank}{\mathop{\mathrm{rank}}}
\author{Rafael~B.~Andrist}
\address{Rafael B. Andrist \\ School of Mathematics and Natural Sciences \\ University of Wuppertal \\ Germany}
\email{rafael.andrist@math.uni-wuppertal.de}
\author{Riccardo~Ugolini}
\address{Riccardo~Ugolini \\ Institute for Mathematics, Physics and Mechanics \\ University of Ljubljana \\ Slovenia}
\email{riccardo.ugolini@imfm.si}
\title{A new notion of Tameness}
\begin{document}

\begin{abstract}
We generalize the notion of tame discrete sets introduced by Rosay and Rudin from complex-Euclidean space to arbitrary complex manifolds and establish their basic properties. We show that complex-linear algebraic groups different from the complex line or the punctured complex line contain tame discrete sets.
\end{abstract}

\maketitle

\section{Introduction}

The seminal paper \cite{RosayRudin} published in 1988 by Rosay and Rudin marked the beginning of a new era in the study of holomorphic maps and automorphisms; they introduced several new ideas that are still further developed thirty years later.

Their main area of interest was the complex Euclidean space $\CC^n$, $n>1$. In the present paper we wish to extend the concepts of tame and unavoidable discrete sets to more general complex manifolds. The standard notion of {\it tame set} is recalled at the beginning of Section $2$.

Attempts to study such sets in a setting different from the one proposed by Rosay and Rudin are already present in literature. In \cite{Kolaric2009} Kolari\v c considered automorphisms of $\CC^n$ fixing an algebraic subvariety of codimension at least $2$. Using a growth condition described by Winkelmann \cite{Winkelmann2008} he showed many interesting properties of discrete sets which are tame with respect to this subgroup of automorphisms. For a complete account on previous work we refer to \cite{FrancBook}*{Section 4.6}.

We introduce the following definition of (strongly) tame sets:
\begin{definition}
\label{def-tame}
Let $X$ be a complex manifold and let $G \subseteq \aut(X)$ be a subgroup of its group of holomorphic automorphisms. We call a closed discrete infinite set $A \subset X$ a \emph{$G$-tame set} if for every injective mapping $f \colon A \to A$ there exists a holomorphic automorphism $F \in G$ such that $F|_{A} = f$.
\end{definition}

In the next section we will discuss its relation to the standard definition given by Rosay and Rudin (see Def.~\ref{def-tame-RR}), then focus on the action of the holomorphic symplectic group on $\CC^{2n}$.

Recently, Winkelmann proposed a definition of weakly tame sets:
\begin{definition}\cite{WinkelmannTame}
Let $X$ be a complex manifold.
An infinite discrete
subset $D$ is called (weakly) tame
if for every exhaustion function $\rho \colon X \to \RR$
and every function $\zeta \colon D \to \RR$
there exists an automorphism $\Phi$ of $X$
such that $\rho(\Phi(x)) \geq \zeta(x)$ for all $x \in D$.
\end{definition}

While it is obvious that any strongly tame set is weakly tame, further investigation is required to understand when the two notions are equivalent in some suitable class of manifolds. In $\CC^n$, where they both coincide with the standard definition by Rosay and Rudin, we do have an instance of equivalence. The manifold $\udisc \times \CC$ is an example where the two notions are different \cite{WinkelmannTame}. For the rest of this paper we will use the word tame to mean strongly $\aut(X)$-tame, where $X$ will be a complex manifold deducible from context.

We provide results on Stein manifold with the density property, a class introduced by Varolin in 2001 \cite{Varolin2} for which many properties of $\CC^n$ hold.
In particular it is worth mentioning that the elements of this class have the \textit{universal embeddability property} for Stein manifolds, in the sense that all Stein manifolds can be properly embedded into any Stein manifold with the density property of sufficiently large dimension \cite{Embedded}.
Stein manifolds with the density property are central in  Anders\'en--Lempert Theory, as it is possible to approximate {\it well behaved} isotopies of injective maps from a Runge domain $\Omega \subset X$ into $X$ with isotopies of automorphisms.
For a precise statement, more details on Stein manifolds, the density property and Andersen-Lempert Theory we refer to the monograph \cite{FrancBook}*{Section 4.10}.

We also consider a symplectic version of tameness in Section \ref{sec-symplectic}.

In Section \ref{sec-sln} we turn our attention to the special linear group. This is our first example of a manifold different from the complex Euclidean space with a tame sequence. The given proofs are rather constructive, expliciting both the tame set and the interpolating automorphism.

In Section \ref{sec-lineargroups} we first establish the existence of tame sets in some large classes of manifolds with the density property, most notably Gizatullin surfaces (Corollary \ref{cor-Gizatullin}) and the Koras--Russel  cubic threefold (Corollary \ref{cor-KorasRussell}).
Then we proceed to prove the existence of tame sets in complex-linear algebraic groups (Theorem \ref{thm-lingrp}). The previous work done for $\slgrp_2(\CC)$ provides a blueprint for this discussion. 

We conclude in Section \ref{sec-unavoidable} by investigating \textit{unavoidable sets}, a class of closed discrete sets that stands at the opposite side of tame ones. We are able to generalize the methods developed in \cite{RosayRudin} to give a new proof of the existence of unavoidable sets in complex subvarieties of $\CC^n$. This result was already obtained by Winkelmann \cite{Winkelmann01} by entirely different methods. Interestingly, algebraicity enters in both proofs and statements, in our case as a growth restriction for $L^2$-estimates.


\section{Basic properties}
\label{sec-basic}

We begin by recalling the classical definition of tameness by Rosay and Rudin:

\begin{definition}[\cite{RosayRudin}*{Def.~3.3}]
\label{def-tame-RR}
Let $e_1$ be the first standard basis vector of $\CC^n$. A set $A \subset \CC^n$ is called \emph{tame} (in the sense of Rosay and Rudin) if there exists an $F \in \aut(\CC^n)$ such that $F(A) = \NN \cdot e_1$. It is very tame if we can choose $F \in \aut_1(\CC^n)$ to be volume preserving.
\end{definition}

We may refer to $\NN \cdot e_1 \subset \CC^n$ as the \emph{standard embedding of the natural numbers into $\CC^n$} and just write $\NN \subset \CC^n$.
The following lemma shows that these two definitions of tameness coincide for $X = \CC^n$.

\begin{lemma}
Let $A \subset \CC^n$. Then $A$ is $G$-tame with $G = \aut(\CC^n)$ in the sense of Definition \ref{def-tame} if and only if it is tame in sense of Rosay and Rudin. 
\end{lemma}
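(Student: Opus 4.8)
The plan is to establish the two implications separately, assuming throughout that $n \geq 2$, as is implicit in the notion of tameness (for $n = 1$ the equivalence fails). In both directions one reduces to standard facts about tame discrete subsets of $\CC^n$ from \cite{RosayRudin}, conveniently collected in \cite{FrancBook}*{Section~4.6}: (a) an infinite subset of a tame set is tame; (b) any bijection between two tame discrete sets extends to a holomorphic automorphism of $\CC^n$; (c) every closed discrete infinite subset of $\CC^n$ contains an infinite tame subset. Observe first that a set satisfying either condition of the lemma is automatically closed, discrete and countably infinite.

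For the forward implication (Rosay--Rudin tameness implies $\aut(\CC^n)$-tameness), fix $F_0 \in \aut(\CC^n)$ with $F_0(A) = \NN \cdot e_1$ and let $f \colon A \to A$ be an arbitrary injection. Then $g := F_0 \circ f \circ F_0^{-1}$ is an injection of $\NN \cdot e_1$ into itself, so $g(\NN \cdot e_1)$ is an infinite subset of the tame set $\NN \cdot e_1$, hence tame by (a). Regarding $g$ as a bijection between the tame sets $\NN \cdot e_1$ and $g(\NN \cdot e_1)$, fact (b) yields $G \in \aut(\CC^n)$ with $G|_{\NN \cdot e_1} = g$; then $F := F_0^{-1} \circ G \circ F_0 \in \aut(\CC^n)$ satisfies $F|_A = f$. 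As $f$ was arbitrary, $A$ is $\aut(\CC^n)$-tame.

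For the converse, use (c) to choose an infinite tame subset $B \subseteq A$ and let $f \colon A \to B$ be any bijection. Being in particular an injection $A \to A$, the map $f$ extends by hypothesis to some $F \in \aut(\CC^n)$ with $F|_A = f$, so $F(A) = B$. Picking $G \in \aut(\CC^n)$ with $G(B) = \NN \cdot e_1$, the automorphism $G \circ F$ maps $A$ onto $\NN \cdot e_1$, which is exactly Rosay--Rudin tameness of $A$.

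The step deserving the most care is invoking (a)--(c) in the generality used. Facts (a) and (c) are part of the original Rosay--Rudin analysis, and (b) is their interpolation theorem for tame sets, in the sharpest form combining \cite{RosayRudin} with later refinements of Anders\'en--Lempert theory. If a clean reference for (c) is lacking, it can be proved directly: pass to a subsequence of $A$ that tends to infinity fast enough; after a generic linear change of coordinates this subsequence projects injectively, with moduli tending to infinity, onto the first coordinate; a shear obtained by Weierstrass interpolation over a discrete subset of $\CC$ flattens it into the line $\CC \times \{0\}^{n-1}$; and discrete subsets of a complex line in $\CC^n$ are tame. Everything else is routine bookkeeping with compositions of automorphisms.
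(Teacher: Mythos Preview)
Your proof is correct and follows essentially the same approach as the paper's (which treats this lemma jointly with the volume-preserving version): both directions rest on the same two ingredients, namely that any bijection between tame sets extends to an automorphism, and that a $G$-tame set contains a Rosay--Rudin tame subset constructed by projecting to a line and shearing back. The only cosmetic difference is that the paper finds the tame subset by projecting onto a standard coordinate axis where $\pi(A)$ is unbounded and then picking a closed discrete subset of the projection, whereas you first pass to a subsequence of $A$ and then use a generic linear projection; the paper's version sidesteps the (minor) issue of verifying that a generic projection sends moduli to infinity.
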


\begin{lemma} \label{eqverytame}
Let $A \subset \CC^n$. Then $A$ is $G$-tame with $G = \aut_1(\CC^n)$, the group of volume preserving automorphisms, in the sense of Definition \ref{def-tame} if and only if it is very tame in sense of Rosay and Rudin. 
\end{lemma}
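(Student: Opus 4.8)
The plan is to mirror the proof of the preceding lemma, the only new ingredient being that the interpolating automorphisms can be chosen volume preserving; for the required extension statements we invoke the volume-preserving (\emph{very tame}) refinements of the Rosay--Rudin theory, and throughout we take $n\ge 2$. Two elementary observations are used repeatedly. First, if $F\in\aut_1(\CC^n)$ then $F$ preserves the standard holomorphic volume form $\omega=dz_1\wedge\cdots\wedge dz_n$, hence so does $F^{-1}$, so $\aut_1(\CC^n)$ is stable under conjugation by its own elements. Second, every infinite subset of a very tame set is again very tame, and any bijection between two very tame subsets of $\CC^n$ extends to an element of $\aut_1(\CC^n)$ --- the volume-preserving analogues of the corresponding facts for tame sets in \cite{RosayRudin}.

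For the implication ``very tame $\Rightarrow$ $\aut_1(\CC^n)$-tame'', we fix $F\in\aut_1(\CC^n)$ with $F(A)=\NN\cdot e_1$ and let $f\colon A\to A$ be injective. Then $g:=F\circ f\circ F^{-1}$ is an injective self-map of $\NN\cdot e_1$; its image is an infinite subset of $\NN\cdot e_1$ and hence very tame, so $g$ extends to some $\tilde g\in\aut_1(\CC^n)$ by the second observation. By the first observation $F^{-1}\circ\tilde g\circ F\in\aut_1(\CC^n)$, and it restricts to $f$ on $A$. This is precisely the argument used for $\aut(\CC^n)$ in the previous lemma, now carried out inside the volume-preserving group.

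For the converse, suppose $A$ is $\aut_1(\CC^n)$-tame. A fortiori it is $\aut(\CC^n)$-tame, so by the previous lemma there is $G\in\aut(\CC^n)$ with $G(A)=\NN\cdot e_1$; what remains is to upgrade $G$ to a volume-preserving automorphism. Put $J:=\det DG$, a nowhere-vanishing entire function, and $\phi:=(1/J)\circ G^{-1}$. It then suffices to find $M\in\aut(\CC^n)$ with $M(\NN\cdot e_1)=\NN\cdot e_1$ and $\det DM=\phi$: indeed $\det D(M\circ G)=(\phi\circ G)\cdot J\equiv 1$ and $(M\circ G)(A)=M(\NN\cdot e_1)=\NN\cdot e_1$, so $M\circ G$ witnesses that $A$ is very tame. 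Constructing such an $M$ --- an automorphism that fixes the very tame set $\NN\cdot e_1$ setwise and at the same time realizes a prescribed nowhere-vanishing Jacobian determinant --- is the step I expect to be the main obstacle. I would obtain it directly, using overshears that fix $\NN\cdot e_1$ (indeed pointwise) while realizing a prescribed Jacobian of the form $e^{a(z_1,\dots,z_{n-1})}$, together with overshears in the remaining coordinate slots, and composing them so as to produce an arbitrary prescribed Jacobian; the prescribed Jacobian imposes no compatibility condition at the points of $\NN\cdot e_1$, since only the values of $M$ there are constrained, not its derivatives. As a by-product this argument shows that every tame discrete subset of $\CC^n$ is in fact very tame, so in this direction the hypothesis enters only through the previous lemma.
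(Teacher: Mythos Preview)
Your ``very tame $\Rightarrow$ $\aut_1(\CC^n)$-tame'' direction is fine and matches the paper's argument.

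The converse, however, has a genuine gap. You reduce to constructing $M\in\aut(\CC^n)$ with $M(\NN\cdot e_1)=\NN\cdot e_1$ and $\det DM=\phi$ for an \emph{arbitrary} nowhere-vanishing entire $\phi$, and then sketch a construction via overshears. This step is not carried out, and the sketch does not work as stated: an overshear in the $z_n$-slot has Jacobian $e^{a(z_1,\dots,z_{n-1})}$, but under composition the factors are pre-composed with the earlier automorphisms, so you are not simply multiplying functions of complementary sets of variables. Producing a prescribed Jacobian exactly (as opposed to approximately, which is Anders\'en--Lempert territory) is not a routine matter, and you offer no mechanism for it. Your own observation that the argument ``as a by-product'' would show that every tame set is very tame --- a question Rosay and Rudin left open --- should be a warning that something nontrivial is being swept under the rug. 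In particular, you have discarded the hypothesis $G=\aut_1(\CC^n)$ and are trying to prove the strictly stronger implication ``$\aut(\CC^n)$-tame $\Rightarrow$ very tame''.

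The paper avoids this entirely by \emph{using} the $\aut_1$-tameness hypothesis in this direction rather than throwing it away. Since $A$ is unbounded, some coordinate projection $\pi(A)$ is unbounded; choose a closed discrete infinite $B\subset\pi(A)$, which lies in a complex line and is therefore very tame by \cite{RosayRudin}*{Prop.~3.1, Rem.~3.4}. A shear $F(z_1,\dots,z_n)=(z_1,z_2+f_2(z_1),\dots,z_n+f_n(z_1))$ --- automatically volume preserving --- sends $B$ into $A$. Now choose any injection $f\colon A\to A$ with image $F(B)$; by $\aut_1$-tameness there is $\Phi\in\aut_1(\CC^n)$ with $\Phi|_A=f$, so $\Phi(A)=F(B)$. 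Then $F^{-1}\circ\Phi\in\aut_1(\CC^n)$ maps $A$ onto $B$, and composing with a volume-preserving map taking $B$ to $\NN\cdot e_1$ shows $A$ is very tame. No Jacobian correction is needed because every map used is already in $\aut_1(\CC^n)$.
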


\begin{proof} \hfill\newline
\framebox{$\Longrightarrow$} 
Consider all projections of the $G$-tame set $A$ to the coordinates axes in $\CC^n$. Since $A$ must be an unbounded set, at least one projection $\pi \colon \CC^n \to L$ to a certain coordinate axis must be such that $\pi(A)$ is an unbounded set. Pick an infinite discrete closed subset $B := \{b_j\}_{j \in \NN} \subset \pi(A)$. According to \cite{RosayRudin}*{Prop.~3.1, Rem.~3.4} an infinite closed discrete set $B \subset \CC^n$ that is contained in a complex line $L \subset \CC^n$ is very tame. Without loss of generality we may assume that $L$ coincides with the first coordinates axis. By the Mittag-Leffler Theorem we find a holomorphic automorphism of the form $F(z_1, z_2, \dots, z_n) = (z_1, z_2 + f_2(z_1), \dots, z_n + f_n(z_1))$ such that $F(B) \subseteq A$. By assumption of $G$-tameness for $A$ we have now found a composition of three automorphisms of $\CC^n$ that map $A$ to $\NN \subset L$, hence $A$ is tame in the sense of Rosay and Rudin. \newline
\framebox{$\Longleftarrow$} Let $A \subset \CC^n$ be a (very) tame set in the sense of Rosay and Rudin. Then every infinite subset of $B \subset A$ is (very) tame as well, again by \cite{RosayRudin}*{Prop.~3.1, Rem.~3.4}. Hence by composition of the two automorphisms furnished by the definition of (very) tameness, we obtain an (volume preserving) automorphism $F$ of $\CC^n$ with $F(A) = B$. Moreover we can achieve that for any injective given $f \colon A \to B$ the automorphism $F$ coincides with $f$ on $A$, see \cite{RosayRudin}*{Rem.~3.2}.
\end{proof}

Because of Lemma \ref{eqverytame} we will use the words \textit{very tame} when discussing $\aut_1(X)$-tame sets in a complex manifold $X$ with a holomorphic volume form.

The original Definition \ref{def-tame-RR} of Rosay and Rudin clearly implies that any two tame sequences can be mapped bijectively into one another by an automorphism of $\CC^n$, as they can both be mapped to $\NN \subset \CC^n$. It is not immediate from Definition \ref{def-tame} that any two $\aut(X)$-tame sets are equivalent, that is they can be mapped bijectively into one another by an automorphism of $X$.
We show that this is the case when $X$ is a Stein manifold with the density property.

\begin{proposition}
\label{prop-tame-equivalence}
Let $X$ be a Stein manifold with the density property and let $A,B \subset X$ be $\aut(X)$-tame sets. Then there exists $F \in \aut(X)$ such that $F(A)=B$.
\end{proposition}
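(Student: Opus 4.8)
The plan is to build $F$ as a composition of two automorphisms obtained from the tameness hypotheses, mediated by a common "subsequence" of $A$ and $B$. First I would fix enumerations $A = \{a_j\}_{j \in \NN}$ and $B = \{b_j\}_{j \in \NN}$. Since $A$ is $\aut(X)$-tame, the shift map $a_j \mapsto a_{2j}$ is an injection $A \to A$, so there exists $G_A \in \aut(X)$ with $G_A(a_j) = a_{2j}$; in particular $G_A(A) = \{a_{2j}\}_{j} =: A'$ is a proper infinite closed discrete subset of $A$ that is again $\aut(X)$-tame (a set carried onto by an automorphism of an $\aut(X)$-tame set is clearly $\aut(X)$-tame). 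Likewise one gets $G_B \in \aut(X)$ with $G_B(B) = B'$, a thinned-out copy of $B$.

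Next I would produce an automorphism taking $A$ \emph{onto} $B'$, which is where the density property does the work. Here is where I would invoke Andersén--Lempert theory: since $X$ is Stein with the density property, for any "nice" isotopy of biholomorphisms between Runge domains one can approximate by ambient automorphisms, and in particular one can realize any prescribed injective self-map on a tame discrete set — but to stay entirely within the framework of the excerpt, the cleaner route is: because $B$ is $\aut(X)$-tame and $B'$ is an infinite subset of $B$, I want an automorphism sending $B$ \emph{bijectively} onto $B'$. This does not follow directly from Definition~\ref{def-tame}, which only furnishes interpolation of injections $B \to B$ whose image may be proper. So the genuine step is: \emph{tameness plus the density property implies that any infinite closed discrete subset of a tame set is again tame, and moreover there is an automorphism realizing the inclusion-type bijection.} I would prove this by a standard Andersén--Lempert push-out/exhaustion argument: enumerate, and inductively construct automorphisms that move finitely many points of $B$ onto the corresponding points of $B'$ while controlling the previously-placed points, using Runge approximation of the local maps supported near the relevant compacta; the density property guarantees these local maps are approximable by global automorphisms. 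Taking a convergent composition yields $H_B \in \aut(X)$ with $H_B(B) = B'$, and symmetrically $H_A \in \aut(X)$ with $H_A(A) = A'$.

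With these in hand the conclusion is short. We have $A' = G_A(A)$ and $B' = H_B(B)$, both infinite closed discrete subsets of $A$ and $B$ respectively; I would choose the enumerations so that $A'$ and $B'$ are "the same size" in the only sense available, namely countably infinite, and then use $\aut(X)$-tameness of $A$ one more time: the composition $A \xrightarrow{\sim} A' \to B'$ — wait, that map is not a self-map of $A$. So instead I take the self-map of $B$ given by $b_j \mapsto b_{\sigma(j)}$ realizing whatever bijection $A' \to B'$ we want pulled back, and interpolate it; cleaner still: since $A$ is tame, pick $\Phi \in \aut(X)$ with $\Phi(A) = A'$ followed by nothing — the honest statement is that from $A'\subset A$ tame and a bijection $\beta\colon A' \to B$ we need $\Psi\in\aut(X)$ with $\Psi|_{A'}=\beta$, which again is exactly the subset-tameness lemma applied to $A$. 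So: let $\Psi \in \aut(X)$ restrict on $A' = G_A(A)$ to the bijection $G_A(a_j) = a_{2j} \mapsto b_j$ (this is an injection $A' \to B$, extended by subset-tameness of $A$ to a bijection $A \to B$ after composing with $H_A^{-1}$ suitably), and set $F := \Psi \circ G_A$. Then $F(A) = \Psi(A') \supseteq \{b_j\} = B$, and with the bijectivity arranged in the construction, $F(A) = B$.

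The main obstacle is precisely the passage from Definition~\ref{def-tame} (which only interpolates injections of $A$ \emph{into itself}) to an automorphism realizing a \emph{bijection} between a tame set and an infinite subset of it. Overcoming this is the Andersén--Lempert step and is the technical heart of the argument; once it is available, everything else is bookkeeping with enumerations. I expect the write-up to state this as an intermediate lemma ("an infinite closed discrete subset of an $\aut(X)$-tame set in a Stein manifold with the density property is again $\aut(X)$-tame, and the inclusion is realized by an automorphism"), prove it via an exhaustion/push-out induction, and then deduce Proposition~\ref{prop-tame-equivalence} in two lines by composing the automorphisms that send $A$ and $B$ onto a common thinned-out copy.
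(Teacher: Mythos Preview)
Your proposal has a genuine gap, and it stems from a misreading of what tameness provides.

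First, a small correction: the step you worry about --- producing $G_B \in \aut(X)$ with $G_B(B) = B'$ --- \emph{does} follow directly from Definition~\ref{def-tame}. The shift $b_j \mapsto b_{2j}$ is an injective self-map of $B$, so tameness yields $G_B$ with $G_B|_B$ equal to this shift; since $G_B$ is a bijection of $X$, it sends $B$ bijectively onto $B' = \{b_{2j}\}$. No Anders\'en--Lempert argument is needed here, and your proposed ``subset-tameness lemma'' is redundant.

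The real gap is elsewhere. At the end you need some $\Psi \in \aut(X)$ whose restriction to $A'$ is the bijection $a_{2j} \mapsto b_j$. You justify this by ``subset-tameness of $A$'', but tameness of $A$ (or of any subset of $A$) only furnishes automorphisms extending injections \emph{from $A$ into $A$}. It can never, by itself, produce an automorphism whose image of $A'$ lands in the unrelated set $B$. This is the step that genuinely requires the density property, and it is precisely the step you abandoned when you pivoted to the ``cleaner route''.

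The paper does essentially what you first sketched and then discarded: it uses Anders\'en--Lempert theory directly to build, by an exhaustion/induction moving one point at a time while staying close to the identity on growing compacts, an automorphism $F$ with $F(a_{i_k}) = b_{j_k}$ for suitably chosen subsequences $\{a_{i_k}\} \subset A$ and $\{b_{j_k}\} \subset B$. Once this bridge between $A$ and $B$ exists, tameness of $A$ gives $G_A$ with $G_A(a_k) = a_{i_k}$, tameness of $B$ gives $G_B$ with $G_B(b_k) = b_{j_k}$, and $G_B^{-1} \circ F \circ G_A$ sends $A$ onto $B$. So the architecture you outlined is right; you just need to aim the Anders\'en--Lempert construction at the correct target (a map $A' \to B'$, not $B \to B'$).
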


\begin{proof}
Write $A=\{ a_i\}_{i \in \NN}$ and $B=\{ b_j\}_{j \in \NN}$. By the definition of $\aut(X)$-tame sets it is enough to find subsequences $\{ a_{i_k}\}_{k \in \NN} \subset A$, $\{ b_{j_k}\}_{k \in \NN} \subset B$ and $F \in \aut(X)$ such that $F(a_{i_k})=b_{j_k}$ for all $k \in \NN$.
Fix a sequence $\varepsilon_k >0$ such that $\sum_{k \geq 1} \varepsilon_k < \infty$ and an exhaustion $K_k$ of $X$ by compact sets.
We will obtain $F$ as the limit of a composition of automorphisms $F_k=\phi_k \circ \phi_{k-1} \circ \dots \circ \phi_1$ with the following properties:

\begin{itemize}
\item[\rm (i)] $F_k(a_{i_r})=b_{j_r}$ for all $r \leq k$;
\vspace{1mm} 
\item[\rm (ii)] $\phi_k$ is $\varepsilon_k$-close to the identity on $C_{k-1}$,

where $C_k$ is an exhaustion of $X$ by holomorphically convex compact sets such that

\item[\rm (iii)] $K_k \cup F_k(K_k) \subset C_k$;
\vspace{1mm} 
\item[\rm (iv)] $C_{k-1} \subset \mathring{C_k}$ and $dist(X \setminus C_k, C_{k-1})>\varepsilon_k$.
\end{itemize}

Suppose we have such a sequence. Then it converges (uniformly on compacts) to $F \in \aut(X)$ thanks to \cite{Forstneric1999}*{Proposition 5.1} and it has the required property because of (i).

We will construct the automorphisms $\phi_k$ by induction. For $k=1$, let $j_k = 1$ and $C_0=\emptyset$. Then $\phi_1$ is any automorphism such that $\phi_1(a_1)=b_1$ which exists because $X$ has the density property.
Suppose we have chosen $j_r$ and constructed $\phi_r$ and $C_{r-1}$ as above for all $r \leq k$. First of all pick a holomorphically convex compact set $C_k$ satisfying (iii) and (iv) above and such that $b_{j_r} \in C_k$ for all $r \leq k$, this is possible because $X$ is Stein. Choose $j_{k+1}$ such that $a_{j_{k+1}}, b_{j_{k+1}} \in X \setminus C_k$, this can be done as $A$ and $B$ are necessarily unbounded. By means of the Anders{\'e}n--Lempert theory, see \cite{PresentState}*{Prop.~2.1}, we can then obtain an automorphism $\phi_{k+1}$ which is $\varepsilon_{k+1}$-close to the identity on $C_k$ and is such that $\phi_{k+1}(b_{j_r})=b_{j_r}$ for $r \leq k$ and $\phi_{k+1}(a_{j_{k+1}})= b_{j_{k+1}}$.
The composition $F_{k+1}=\phi_{k+1} \circ \phi_k \circ \dots \circ \phi_1$ then satisfies (i) and we can proceed with the induction.
\end{proof}

\section{Symplectic case}
\label{sec-symplectic}

We are also interested in a special class of volume preserving automorphisms, namely the ones preserving the standard symplectic form $\omega_{Sp}=\sum_{j=1}^{n} dz_j \wedge dz_{n+j}$ on $\CC^{2n}$.

\begin{definition}
We denote by $\aut_{Sp}(\CC^{2n})$ the group of automorphisms of $\CC^{2n}$ that preserve $\omega_{Sp}$ and call it the \emph{group of symplectic automorphisms}.
\end{definition}

It was shown by Forstneri\v{c} \cite{ForstnericActions}*{Thm.~5.1} that the group of symplectic automorphisms $\aut_{Sp}(\CC^{2n})$ contains a dense subgroup generated by shears of the following form
\[
F(z)=z + f(z^TJv)v,
\]
where $v \in \CC^{2n}$, $f\colon \CC \to \CC$ is an entire function, $z^T$ denotes the transpose of $z$ and $J$ is the $2n \times 2n$ block matrix
\[\begin{bmatrix}
   0   & \id  \\
    -\id     & 0
\end{bmatrix}.
\]

While Rosay and Rudin were not interested in the study of tameness with respect to this group, we have an obvious extension of the definition of tame and very tame sets.

\begin{definition}
\label{def-symp-tame-RR}
A set $A \subset \CC^{2n}$ is called \emph{symplectically tame} (in the sense of Rosay and Rudin) if there exists an $F \in \aut_{Sp}(\CC^{2n})$ such that $F(A) = \NN \cdot e_1$.
\end{definition}

As with tame and very tame, we wish to verify that we can permute any two points of $\NN \cdot e_1$ with an element of $\aut_{Sp}(\CC^{2n})$. We will show that any two such sequences contained in the $z_1$-axis of $\CC^{2n}$ can be mapped into one another by an element of $\aut_{Sp}(\CC^{2n})$. The following is analogous to \cite{RosayRudin}*{Prop.~3.1}.

\begin{lemma}
If $\{\alpha_j\}$ and $\{\beta_j\}$ are closed discrete sequences in $\CC$, $\alpha_i \neq \alpha_j$ and $\beta_i \neq \beta_j$ for $i \neq j$, then there exists $F \in \aut_{Sp}(\CC^{2n})$ such that $F(\alpha_j e_1)=\beta_j e_1$ for $j \in \NN$.
\end{lemma}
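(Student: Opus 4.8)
The plan is to reduce to a two-dimensional picture and then imitate the classical three-shear argument of Rosay and Rudin (\cite{RosayRudin}*{Prop.~3.1}). Among Forstneri\v{c}'s generating shears $F(z)=z+f(z^TJv)v$ of $\aut_{Sp}(\CC^{2n})$ (see \cite{ForstnericActions}*{Thm.~5.1}) there are two families that move only the symplectic coordinate plane spanned by $e_1$ and $e_{n+1}$ and fix every other coordinate: taking $v=e_{n+1}$ one computes $z^TJv=z_1$, so $F$ becomes $z_{n+1}\mapsto z_{n+1}+f(z_1)$ with all remaining coordinates unchanged; taking $v=e_1$ one gets $z^TJv=-z_{n+1}$, so $F$ becomes $z_1\mapsto z_1+f(-z_{n+1})$. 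Each such map is a symplectic automorphism of $\CC^{2n}$ (its inverse is the shear with $f$ replaced by $-f$, since $v^TJv=0$), and on the $(z_1,z_{n+1})$-plane these are precisely the two elementary shears of $\CC^2$. As every point $\alpha_j e_1$ lies in this plane with vanishing $z_{n+1}$- and all other coordinates, it suffices to carry $\alpha_j e_1$ to $\beta_j e_1$ by a finite composition of shears of these two types.

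First fix once and for all a closed discrete sequence $\{\gamma_j\}_{j\in\NN}\subset\CC$ with pairwise distinct terms, e.g. $\gamma_j=j$. By the Weierstrass interpolation theorem — this is where one uses that $\{\alpha_j\}$ is closed discrete with distinct terms — choose an entire function $f_1$ with $f_1(\alpha_j)=\gamma_j$ for all $j$, and set $\Phi_1(z)=z+f_1(z_1)e_{n+1}$, so that $\Phi_1(\alpha_j e_1)=\alpha_j e_1+\gamma_j e_{n+1}$. Next choose an entire $f_2$ with $f_2(-\gamma_j)=\beta_j-\alpha_j$ (admissible because $\{\gamma_j\}$ is closed discrete with distinct terms) and set $\Phi_2(z)=z+f_2(-z_{n+1})e_1$, giving $\Phi_2(\alpha_j e_1+\gamma_j e_{n+1})=\beta_j e_1+\gamma_j e_{n+1}$. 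Finally choose an entire $f_3$ with $f_3(\beta_j)=-\gamma_j$ (using that $\{\beta_j\}$ is closed discrete with distinct terms) and set $\Phi_3(z)=z+f_3(z_1)e_{n+1}$, so that $\Phi_3(\beta_j e_1+\gamma_j e_{n+1})=\beta_j e_1$. Then $F:=\Phi_3\circ\Phi_2\circ\Phi_1$ is a finite product of symplectic shears, hence $F\in\aut_{Sp}(\CC^{2n})$, and $F(\alpha_j e_1)=\beta_j e_1$ for all $j$.

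There is no genuinely hard step. The one point requiring care is the bookkeeping that the auxiliary sequence $\{\gamma_j\}$ be chosen closed discrete and with pairwise distinct entries, since otherwise the interpolation problems defining $f_2$ and $f_3$ would be over-determined (the shear $\Phi_2$ depends only on $z_{n+1}$, so two points sharing a $\gamma$-value would be forced to receive the same translation in the $z_1$-direction). The only other routine check is that $z\mapsto z+f(z^TJv)v$ preserves $\omega_{Sp}$: writing $dF=I+v\,f'(z^TJv)\,(Jv)^T$, the identity $v^TJv=0$ kills the quadratic term in $dF^TJ\,dF$ and $J^T=-J$ makes the two linear terms cancel, leaving $dF^TJ\,dF=J$.
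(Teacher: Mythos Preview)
Your proof is correct and follows essentially the same three-shear strategy as the paper: both work entirely in the symplectic coordinate plane spanned by $e_1$ and $e_{n+1}$ and compose shears of the two types $z\mapsto z+f(z_1)e_{n+1}$ and $z\mapsto z+g(-z_{n+1})e_1$. The only cosmetic difference is that the paper takes the auxiliary sequence to be $\{\alpha_j\}$ itself, so the first shear is simply $\sigma_1(z)=z+z_1\,e_{n+1}$ with no interpolation needed, whereas you introduce a separate sequence $\gamma_j=j$; this costs one extra interpolation but changes nothing of substance.
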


\begin{proof}
Let $\sigma_1(z)=z+z_1 e_{n+1}=z+(z^TJe_{n+1}) e_{n+1}$. By the Mittag-Leffler interpolation theorem there are $f,g \colon \CC \to \CC$ such that $f(\alpha_j)=\beta_j-\alpha_j$ and $g(\beta_j)=-\alpha_j$. Choose $\sigma_2(z)= z+f(z_{n+1}) e_1 =z+f(-z^T J e_1) e_1$ and $\sigma_3(z)= z + g(z_1) e_{n+1}  = z+g(z^T J e_{n+1}) e_{n+1}$. Then $F=\sigma_3 \circ \sigma_2 \circ \sigma_1$ is the required symplectic automorphism.
\end{proof}

We will now compare Definition \ref{def-symp-tame-RR} with Definition \ref{def-tame} for $G=\aut_{Sp}(\CC^{2n})$.

\begin{lemma}
Let $A \subset \CC^{2n}$. Then $A$ is $G$-tame with $G = \aut_{Sp}(\CC^{2n})$ in the sense of Definition \ref{def-tame} if and only if it is symplectically tame in sense of Rosay and Rudin. 
\end{lemma}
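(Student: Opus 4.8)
The plan is to run the argument of Lemma~\ref{eqverytame}, with two substitutions: the Rosay--Rudin fact ``a closed discrete set contained in a complex line is very tame'' is replaced by the preceding Lemma, and the Mittag-Leffler shears used there to straighten a graph over a line are replaced by symplectic shears of the form $z \mapsto z + f(z^{T}Jv)v$.

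For the implication from Rosay--Rudin symplectic tameness to Definition~\ref{def-tame}, I would fix $F_{0} \in \aut_{Sp}(\CC^{2n})$ with $F_{0}(A) = \NN \cdot e_{1}$ and index $A = \{a_{j}\}_{j\in\NN}$ so that $F_{0}(a_{j}) = j e_{1}$. Given an injective $f \colon A \to A$, write $f(a_{j}) = a_{\sigma(j)}$ with $\sigma \colon \NN \to \NN$ injective; then $\{j\}_{j}$ and $\{\sigma(j)\}_{j}$ are closed discrete in $\CC$ with pairwise distinct entries, so the preceding Lemma gives $H \in \aut_{Sp}(\CC^{2n})$ with $H(j e_{1}) = \sigma(j) e_{1}$, and $F_{0}^{-1} \circ H \circ F_{0} \in \aut_{Sp}(\CC^{2n})$ restricts to $f$ on $A$.

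For the converse I would first observe that $A$, being infinite, closed and discrete, is unbounded; by the pigeonhole principle there is a coordinate index $m_{0}$ and an infinite subset $\{a_{i_{j}}\}_{j} \subseteq A$ along which the $m_{0}$-th coordinate has the largest modulus among all coordinates, and after thinning I may assume these $m_{0}$-th coordinates are pairwise distinct with modulus tending to infinity. Applying a linear symplectic automorphism that is a signed permutation of the coordinates and sends the $m_{0}$-th axis to the first one (this changes neither tameness property, by conjugation), I may assume $a_{i_{j}} = (b_{j}, c^{(2)}_{j}, \dots, c^{(2n)}_{j})$ with $b_{j}$ pairwise distinct, $|b_{j}| \to \infty$, and $|c^{(m)}_{j}| \le |b_{j}|$ for all $m, j$. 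The core of the argument is then to construct $G \in \aut_{Sp}(\CC^{2n})$ with $G(a_{i_{j}}) = b_{j} e_{1}$ as a finite composition of symplectic shears: shears on the vectors $e_{m} + s_{m} e_{n+1}$ ($2 \le m \le n$) clear the coordinates $z_{2}, \dots, z_{n}$; a shear on $e_{n+1}$ then resets $z_{n+1}$ to $0$; shears on $e_{n+m} + t_{m} e_{n+1}$ ($2 \le m \le n$) clear $z_{n+2}, \dots, z_{2n}$; and a final shear on $e_{n+1}$ clears $z_{n+1}$ again. Each shear function comes from the Mittag-Leffler theorem, which applies because the scalars $s_{m}, t_{m}$ can be taken of modulus $> 1$ and outside a countable exceptional set, making the relevant arguments (such as $-c^{(n+m)}_{j} + s_{m} b_{j}$, of modulus at least $(|s_{m}| - 1)|b_{j}|$) closed discrete sequences in $\CC$ with pairwise distinct entries --- this is where the normalization $|c^{(m)}_{j}| \le |b_{j}|$ and $|b_{j}| \to \infty$ are used.

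Once $G$ is available, $\{b_{j} e_{1}\}_{j}$ lies on the first coordinate axis, so the preceding Lemma provides $\Phi \in \aut_{Sp}(\CC^{2n})$ with $\Phi(b_{j} e_{1}) = j e_{1}$; and since $\{a_{i_{j}}\}_{j}$ is an infinite subset of $A$, choosing a bijection $A \to \{a_{i_{j}}\}_{j}$ and using $\aut_{Sp}(\CC^{2n})$-tameness of $A$ yields $H \in \aut_{Sp}(\CC^{2n})$ with $H(A) = \{a_{i_{j}}\}_{j}$. Then $\Phi \circ G \circ H \in \aut_{Sp}(\CC^{2n})$ maps $A$ onto $\NN \cdot e_{1}$, so $A$ is symplectically tame in the sense of Rosay and Rudin. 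I expect the construction of $G$ to be the only genuine obstacle: symplectic shears can alter a coordinate only as a function of a prescribed linear combination of the others, so one must check carefully that the above bookkeeping really straightens the graph $\{a_{i_{j}}\}_{j}$ onto the axis while keeping every Mittag-Leffler interpolation admissible. The remainder is exactly the template of Lemma~\ref{eqverytame}.
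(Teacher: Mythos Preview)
Your proposal is correct and follows the same template as the paper: both directions mirror Lemma~\ref{eqverytame}, with the preceding Lemma replacing the Rosay--Rudin fact on very tame line subsets, and a composition of symplectic shears replacing the Mittag-Leffler graph-shears.

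The one genuine difference is the \emph{direction} of the key shear map in the $\Longrightarrow$ part. You straighten a subsequence $\{a_{i_j}\}\subset A$ onto the $z_1$-axis; the paper instead starts from a closed discrete subset $B=\{b_i e_1\}$ of the axis and builds $F\in\aut_{Sp}(\CC^{2n})$ with $F(B)\subset A$, i.e.\ it lifts the axis into $A$ rather than projecting $A$ onto the axis. Concretely, the paper uses shears $\sigma_j(z)=z+f_j(z^TJ(e_j+e_{n+1}))(e_j+e_{n+1})$ for $2\le j\le n$ (argument $-z_{n+j}+z_1=b_i$ since one starts on the axis), then shears on $e_{n+j}+e_{n+1}$, and finally a shear on $e_{n+1}$. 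Because the starting points lie on the axis, most interpolation arguments reduce to the distinct values $b_i$, so the paper avoids your $|s_m|>1$ normalization and the ``outside a countable exceptional set'' bookkeeping. Your reversed construction is equally valid, but buys a small amount of extra checking; conversely, the paper's direction hides an analogous distinctness issue in its second round (the arguments $\pi_j(a_i)+\pi_1(b_i)$), which you handle more explicitly. Either way the composition with the preceding Lemma and the $G$-tameness of $A$ finishes the proof exactly as you describe.
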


\begin{proof}
The proof is similar to the one presented for tame and very tame sets. As we have already enstablished that a discrete subset $B \subset \pi(A)$ of a complex line is symplectically tame, we need to show that we can construct the automorphism $F$ in the \framebox{$\Longrightarrow$} direction of the above proof in such a way that $F \in \aut_{Sp}(\CC^{2n})$. \newline
Again, suppose $\pi=\pi_1$ is the projection on the first coordinate axis. We wish to find $F \in \aut_{Sp}(\CC^{2n})$ such that $F(B) \subset A$. We will obtain it as a composition $\sigma_{2n}\circ \dots \circ \sigma_2$. Enumerate $B=\{b_i\}_{i \in \NN}$ and for each $b_i \in B$ choose $a_i \in A \cap \pi^{-1}(b_i)$. Denote by $\pi_j \colon \CC^{2n} \to \CC$ the projection onto the $j$-th coordinate.\newline
For $2 \leq j\leq n$, let $f_j \colon \CC \to \CC$ be such that $f_j(\pi_1 (b_i))=\pi_j(a_i)$ for $i \in \NN$ and define $\sigma_j(z)=z+f_j(z^T J (e_j+e_{n+1}))(e_j+e_{n+1})$. Observe that $\pi_j(\sigma_{n}\circ \dots \circ \sigma_2(b_i))=\pi_j(a_i)$ for $1 \leq j \leq n$ and $i \in \NN$.\newline
Let $c_i=\sigma_{n}\circ \dots \circ \sigma_2(b_i)$ and for $2 \leq j \leq n-1$ pick functions $f_{j+n} \colon \CC \to \CC$ such that $f_{j+n}(\pi_j(c_i)+\pi_1(c_i))=\pi_{n+j}(a_i)$ for all $i \in \NN$. If we define $\sigma_{n+j}(z)=z+f_{n+j}(z^T J (e_{n+j}+e_{n+1}))(e_{n+j}+e_{n+1})$, then $\pi_j(\sigma_{2n-1}\circ \dots \circ \sigma_2(b_i))=\pi_j(a_i)$ for $1 \leq j \leq 2n-1, \ j \neq n+1$ and $i \in \NN$. Hence all projections are fine but the one on the $(n+1)$-th coordinate axis. Denote by $d_i=\sigma_{2n-1}\circ \dots \circ \sigma_2(b_i)$ and let $f_{2n} \colon \CC \to \CC$ be such that $f_{2n}(\pi_1(b_i))=\pi_{n+1}(a_i)-\pi_{n+1}(d_i)$. For $\sigma_{2n}(z)=z+f_{2n}(z^T J e_{n+1}) e_{n+1}$, we obtain $F=\sigma_{2n}\circ \dots \circ \sigma_2$ with the required properties.
\end{proof}

\begin{lemma}
The set $\NN \times \NN \times \{0\} \times \{0\} \subset \CC^4$ is $\aut_{Sp}(\CC^4)$-tame.
\end{lemma}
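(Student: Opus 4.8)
The plan is to reduce the statement to the line case that has already been settled. By the earlier lemma on sequences of distinct points in $\CC$, any sequence of distinct points contained in the $z_1$-axis of $\CC^4$ can be mapped by an element of $\aut_{Sp}(\CC^4)$ onto any other such sequence; taking the target sequence to be a subsequence of the source, it follows that every injective self-map of a discrete sequence in the $z_1$-axis extends to an element of $\aut_{Sp}(\CC^4)$. Hence it suffices to produce a $G \in \aut_{Sp}(\CC^4)$ carrying $A := \NN\times\NN\times\{0\}\times\{0\}$ onto a closed discrete sequence lying in the $z_1$-axis: for an injective $f \colon A \to A$ one then forms $\tilde f := G\circ f\circ G^{-1}$ on $G(A)$, realizes $\tilde f$ by some $H \in \aut_{Sp}(\CC^4)$ as above, and sets $F := G^{-1}\circ H\circ G$, which restricts to $f$ on $A$.

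To build $G$ I would fix an irrational $\lambda > 0$, note that $S := \{\,m+\lambda n : (m,n)\in\NN^2\,\} \subset \RR_{\ge 0}$ is closed and discrete in $\CC$ (every disc contains only finitely many of its points) and that $(m,n)\mapsto m+\lambda n$ is a bijection $\NN^2 \to S$, and use Weierstrass interpolation to pick an entire function $\psi$ with $\psi(m+\lambda n)=n$ for all $m,n$. Then flatten the grid onto the axis in four symplectic-shear steps, using $z^{T}Je_1=-z_3$, $z^{T}Je_2=-z_4$, $z^{T}Je_3=z_1$, $z^{T}Je_4=z_2$: the shear along $v=e_1+e_4$ with $f(w)=\lambda w$ sends $(m,n,0,0)$ to $(m+\lambda n,\,n,\,0,\,\lambda n)$; the shear along $v=e_4$ with $f(w)=-\lambda w$ clears the last coordinate; the shear along $v=e_2+e_3$ with $f=-\psi$ then clears $z_2$, at the price of placing $-n$ into $z_3$; and the shear along $v=e_3$ with $f=\psi$ clears $z_3$. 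The composition $G$ (a product of maps $z\mapsto z+f(z^{T}Jv)v$, hence in $\aut_{Sp}(\CC^4)$) thus sends $(m,n,0,0)\mapsto(m+\lambda n,0,0,0)$, so $G(A)=S\cdot e_1$, which is a closed discrete sequence of distinct points in the $z_1$-axis since $G$ is an automorphism. This completes the reduction.

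The main obstacle is exactly the construction of $G$. A symplectic shear cannot change one coordinate as a function of another without simultaneously perturbing a third, so there is no way to collapse the two-dimensional grid onto a line in one clean move; the device is to proceed in stages and let each shear repair the auxiliary coordinate produced by the previous one. What makes the repair possible is the irrational slope $\lambda$, which renders the surviving first coordinate $m+\lambda n$ injective over $\NN^2$, so that the ``decoding'' function $\psi$ exists by Weierstrass interpolation. Everything past this point — the conjugation, the invocation of the lemma on sequences in the $z_1$-axis, and the bookkeeping that $S\cdot e_1$ is genuinely closed and discrete — is routine.
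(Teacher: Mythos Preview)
Your proof is correct and follows essentially the same strategy as the paper: use an irrational slope to encode the pair $(m,n)$ injectively into a single complex number, then push $A$ onto a closed discrete subset of the $z_1$-axis by a composition of symplectic shears, thereby reducing to the already-established lemma on sequences in the line. The only difference is cosmetic---you route the encoding through $z_1$ first and then clean up $z_2,z_3,z_4$ in three further shears, whereas the paper first stores the encoding in the $(z_3,z_4)$-plane and then transfers it to $z_1$; both are the same idea.
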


\begin{proof}
Let $\alpha \in \RR \setminus \QQ$ be a positive irrational number. Let $\sigma_1 \in \aut_{Sp}(\CC^4)$ of the form
\[
\sigma_1(z)=z+f(z_1+\alpha z_3)(e_2 + \alpha e_4),
\]
where $f \in \Olo(\CC)$ is such that $f(n+\alpha m)=x_{n,m}$ and $\{x_{n,m}\}_{n,m \in \NN}$ is any closed discrete set in $\CC$, for instance the enumeration of $\NN \times \NN$ given by Cantor diagonal argument.
Let $\sigma_2 \in \aut_{Sp}(\CC^4)$ satisfy
\[
\sigma_2(z)=z+g(z_3)e_1 +h(z_4)e_2,
\]
where $g,h \in \Olo(\CC)$ such that $g(x_{n,m})=x_{n,m}-n$ and $h(\alpha x_{n,m})=-m$.
Finally we have $\sigma_3 \in \aut_{Sp}(\CC^4)$ of the form
\[
\sigma_3(z)=z+b(z_1+ \alpha z_2)(e_3+\alpha e_4)
\]
for $b\in \Olo(\CC)$ such that $b(x_{n,m})=-x_{n,m}$.

The composition $\sigma_3 \circ \sigma_2 \circ \sigma_1 \in \aut_{Sp}(\CC^4)$ then maps $(n,m,0,0)$ to $(x_{n,m},0,0,0)$, proving the statement.
\end{proof}

\begin{proposition}
Let $A \subset \CC^4$ be a closed discrete set and $\pi_1,\pi_2: \CC^4 \to \CC$ the projections on the first two coordinate axes respectively. If $\pi_1(A)$ and $\pi_2(A)$ are closed discrete and with finite fibers, then $A$ is $\aut_{Sp}(\CC^4)$-tame.
\end{proposition}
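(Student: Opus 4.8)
We propose the following approach. By the lemma identifying $\aut_{Sp}(\CC^{4})$-tameness with symplectic tameness in the sense of Rosay and Rudin, it suffices to produce a single $F\in\aut_{Sp}(\CC^{4})$ with $F(A)=\NN\cdot e_{1}$; and by the lemma asserting that any two closed discrete sequences sitting in the $z_{1}$-axis are related by a symplectic automorphism, it is in fact enough to produce $F\in\aut_{Sp}(\CC^{4})$ carrying $A$ \emph{into} the $z_{1}$-axis, since the image is then automatically a closed discrete sequence there. So the whole task is to build a composition of Forstnerič shears $z\mapsto z+f(z^{T}Jv)v$ that kills $z_{2},z_{3},z_{4}$ on $A$.

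The plan is to reduce first to the model set $\tilde A=\{(p_{k},q_{k},0,0)\}_{k\in\NN}$ with the $p_{k}$ pairwise distinct and $\{p_{k}\}$ closed discrete, because from there two explicit shears finish the argument. Indeed, $z^{T}Je_{3}=z_{1}$ and $z^{T}J(e_{2}+e_{3})=z_{1}-z_{4}$, so, choosing entire $f,g$ with $f(p_{k})=q_{k}$ and $g(p_{k})=-q_{k}$ (which exist since $\{p_{k}\}$ is closed discrete with the $p_{k}$ distinct), the shear $\sigma_{1}(z)=z+f(z_{1})e_{3}$ sends $\tilde A$ to $\{(p_{k},q_{k},q_{k},0)\}$, and then $\sigma_{2}(z)=z+g(z_{1}-z_{4})(e_{2}+e_{3})$ sends that to $\{(p_{k},0,0,0)\}\subset\CC\cdot e_{1}$; one further symplectic automorphism then brings this to $\NN\cdot e_{1}$.

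It remains to reduce $A$ to $\tilde A$. I would do this in three stages. (a) A pair of \emph{spreading} shears $z\mapsto z+h_{1}(z_{1})e_{3}$ and $z\mapsto z+h_{2}(z_{2})e_{4}$: these affect only $z_{3}$, resp.\ $z_{4}$, and are legitimate because $\pi_{1}(A)$ and $\pi_{2}(A)$ are closed discrete, so $h_{1},h_{2}$ may be prescribed arbitrarily on them; using that each fibre of $\pi_{1}$ (resp.\ $\pi_{2}$) is a \emph{bounded cluster}, one can arrange that on $A$ the new coordinate $z_{3}$ is a closed discrete sequence dominating $|z_{1}|,|z_{2}|,|z_{4}|$ in modulus by a large margin. (b) Because $|z_{1}(a)|\to\infty$ along $A$ and $z_{3}$ now dominates the other coordinates, any linear form $\gamma z_{1}+\delta z_{2}-z_{3}-\beta z_{4}$ with $|\gamma|,|\delta|,|\beta|$ small is closed discrete on $A$; shears $z\mapsto z+f(\gamma z_{1}+\delta z_{2}-z_{3}-\beta z_{4})(e_{1}+\beta e_{2}+\gamma e_{3}+\delta e_{4})$ move $z_{1}$ by an amount that varies with $z_{2},z_{3},z_{4}$, hence can be used to separate the finitely many points of $A$ lying over each value of $\pi_{1}$, making $\pi_{1}|_{A}$ injective; a symmetric manoeuvre (re-spreading $z_{4}$ first) makes $\pi_{2}|_{A}$ injective, both projections staying closed discrete. (c) With $\pi_{1}|_{A}$ and $\pi_{2}|_{A}$ injective, the shears $z\mapsto z-h_{3}(z_{1})e_{3}$ and $z\mapsto z-h_{4}(z_{2})e_{4}$ with $h_{3}(\pi_{1}(a))=z_{3}(a)$, $h_{4}(\pi_{2}(a))=z_{4}(a)$ kill $z_{3}$ and $z_{4}$, yielding $\tilde A$.

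The main obstacle is stage (b), and precisely the bookkeeping needed to keep \emph{every} shear argument a \emph{closed discrete} sequence in $\CC$: a linear combination of two closed discrete sequences need not be closed discrete, and the set of bad coefficients can be dense, so no naive genericity argument is available. This is exactly what the spreading stage (a) is for — once $z_{3}$ dominates the remaining coordinates, the relevant forms are forced to be closed discrete — and it is also where the finite-fibre hypothesis is indispensable: at each interpolation node one must simultaneously separate the finitely many points of $A$ over the corresponding value of $\pi_{1}$ (or $\pi_{2}$), avoid creating new coincidences with points over nearby values, and keep the projection closed discrete; finiteness of the fibres turns this into finitely many conditions per node, which can then be met by a diagonal choice of interpolation data (one may even take the interpolation values uniformly bounded, which automatically preserves discreteness of the projection and, being small, does not undo separations already achieved). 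Granting this, the three stages combine with the two explicit shears $\sigma_{1},\sigma_{2}$ and the $z_{1}$-axis lemma to give the $\aut_{Sp}(\CC^{4})$-tameness of $A$.
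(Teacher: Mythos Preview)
Your approach is correct in outline and genuinely different from the paper's, but one transition deserves an extra sentence. The paper exploits a structural shortcut you bypass: the observation that for any $F=(F_1,F_2)\in\aut_1(\CC^2)$ the map $(z_1,z_2,z_3,z_4)\mapsto(F_1(z_1,z_3),z_2,F_2(z_1,z_3),z_4)$ lies in $\aut_{Sp}(\CC^4)$. Since the symplectic form splits as $dz_1\wedge dz_3+dz_2\wedge dz_4$, this embeds $\aut_1(\CC^2_{z_1,z_3})\times\aut_1(\CC^2_{z_2,z_4})$ into $\aut_{Sp}(\CC^4)$. The hypothesis that $\pi_1|_A$ is closed discrete with finite fibres then says exactly that $\pi_{1,3}(A)\subset\CC^2$ satisfies the Rosay--Rudin criterion \cite{RosayRudin}*{Theorem~3.5} for very tameness, so a single $\aut_1(\CC^2)$-move sends it into $\NN\times\{0\}$; symmetrically for $(z_2,z_4)$; and one lands in $\NN\times\NN\times\{0\}\times\{0\}$, already shown symplectically tame in the preceding lemma. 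This is a two-line reduction to existing $\CC^2$ theory.

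Your route trades that black box for an explicit shear construction, which is self-contained and makes the mechanism visible, at the cost of the bookkeeping you flag. The one point you should make explicit is why the ``symmetric manoeuvre'' for $\pi_2$ does not destroy the injectivity of $\pi_1$ just achieved: in your shear $z\mapsto z+f(\cdot)(e_1+\beta e_2+\gamma e_3+\delta e_4)$ the $e_2$-component is what perturbs $z_2$, and in the symmetric step the $e_1$-component would perturb $z_1$. The clean fix is to observe that once $\pi_1|_A$ is injective, the second-round argument $\gamma' z_1+\delta' z_2-z_4$ (with \emph{no} $e_1$-component in $v$, i.e.\ $v=e_2+\gamma' e_3+\delta' e_4$) is already generically injective on $A$, because $z_1$ alone separates points; so you may take the $e_1$-coefficient to be zero and leave $z_1$ untouched. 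With that remark your stages (a)--(c) go through as written.
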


\begin{proof}
We will map $A$ to a subset of $\NN \times \NN \times \{0\} \times \{0\}$ using symplectic automorphisms. Observe that if $F=(F_1,F_2) \in \aut_1(\CC^2)$, then $(F_1(z_1,z_3),z_2,F_2(z_1,z_3),z_4) \in \aut_{Sp}(\CC^4)$.
Consider the projections
\[
\begin{diagram}
\node{\CC^4} \arrow{e,t}{\pi_{1,3}} \node{\CC^2} \arrow{e,t}{\tilde{\pi}_1} \node{\CC} \\
\node{(z_1,z_2,z_3,z_4)} \arrow{e,t,T}{} \node{(z_1,z_3)} \arrow{e,t,T}{} \node{z_1} \\
\end{diagram}
\]
As $\pi_1(A)$ is closed discrete and with finite fibers so is $\tilde{\pi}_1$ when restricted to $\pi_{1,3}(A)$, hence  $\pi_{1,3}(A)$ is very tame in $\CC^2$ thanks to \cite{RosayRudin}*{Theorem 3.5}.
Therefore there exists $F \in \aut_1(\CC^2)$ such that $F(\pi_{1,3}(A)) \subset \NN \times \{0\}$. By the previous observation we also have a symplectic automorphism such that $A$ is mapped into $\NN \times \CC \times \{0\} \times \CC$.
By the same argument applied to the second and fourth coordinate, we have mapped $A$ in $\NN \times \NN \times \{0\} \times \{0\}$.
\end{proof}

\section{Special Linear Group}
\label{sec-sln}

In this section we will provide a few examples of tame sequences in the special linear group $\slgrp_2(\CC)$.

\begin{lemma} \label{seq1}
The set
\[
\left\{ \begin{pmatrix}
1 & k \\
0 & 1
\end{pmatrix} \,:\, k \in \NN \right\} \subset \slgrp_2(\CC)
\]
is $G$-tame for $G = \aut(\slgrp_2(\CC))$
\end{lemma}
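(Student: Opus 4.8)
The plan is to realize every injective self-map of $A$ by composing three elementary shear automorphisms of $\slgrp_2(\CC)$, in the spirit of the coordinate shears of Rosay and Rudin. Write $p_k = \begin{pmatrix} 1 & k \\ 0 & 1 \end{pmatrix}$, so that $A = \{p_k : k \in \NN\}$, and note that giving an injective map $f \colon A \to A$ is the same as giving an injective map $\phi \colon \NN \to \NN$ with $f(p_k) = p_{\phi(k)}$; we must produce $F \in \aut(\slgrp_2(\CC))$ with $F(p_k) = p_{\phi(k)}$ for all $k$. We use two families of shears. For an entire function $h \colon \CC \to \CC$, the map $R_h \colon \begin{pmatrix} a & b \\ c & d \end{pmatrix} \mapsto \begin{pmatrix} a & b \\ c & d \end{pmatrix}\begin{pmatrix} 1 & h(c) \\ 0 & 1 \end{pmatrix}$ leaves the $(2,1)$-entry $c$ unchanged, hence is a holomorphic automorphism of $\slgrp_2(\CC)$ with inverse $R_{-h}$; symmetrically, $L_h \colon \begin{pmatrix} a & b \\ c & d \end{pmatrix} \mapsto \begin{pmatrix} 1 & 0 \\ h(b) & 1 \end{pmatrix}\begin{pmatrix} a & b \\ c & d \end{pmatrix}$ leaves the $(1,2)$-entry $b$ unchanged and is a holomorphic automorphism with inverse $L_{-h}$.

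First I would apply $L_{\id}$ to break the symmetry: since $p_k$ has first row $(1,k)$, we get $L_{\id}(p_k) = \begin{pmatrix} 1 & k \\ k & k^2+1 \end{pmatrix}$, whose $(2,1)$-entry now records $k$. Next, by the Mittag--Leffler interpolation theorem choose an entire $h$ with $h(k) = \phi(k) - k$ for all $k \in \NN$, which is possible because $\NN$ is closed and discrete in $\CC$; applying $R_h$ and computing, $R_h\bigl(L_{\id}(p_k)\bigr) = \begin{pmatrix} 1 & \phi(k) \\ k & k\phi(k)+1 \end{pmatrix}$, so the $(1,2)$-entry is now $\phi(k)$ while the $(2,1)$-entry is still $k$. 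Finally, using that $\phi$ is injective — so the nodes $\{\phi(k)\}_{k \in \NN}$ are pairwise distinct and form a closed discrete subset of $\CC$ — pick by Mittag--Leffler an entire $\ell$ with $\ell(\phi(k)) = -k$ and apply $L_\ell$: the current matrix has first row $(1,\phi(k))$, so the $(2,1)$-entry becomes $-k + k = 0$, and then the $(2,2)$-entry is forced to equal $1$ by the determinant relation, i.e.\ $L_\ell\bigl(R_h(L_{\id}(p_k))\bigr) = p_{\phi(k)}$. Hence $F := L_\ell \circ R_h \circ L_{\id} \in \aut(\slgrp_2(\CC))$ satisfies $F|_A = f$, which proves the lemma.

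The argument is essentially algebraic bookkeeping together with two applications of the classical interpolation theorem, and I do not expect a serious obstacle. The one place where the hypothesis is genuinely used is the final shear $L_\ell$: injectivity of $f$ is exactly what guarantees that the interpolation nodes $\phi(k)$ are distinct, so that an entire function with the prescribed values $-k$ at these nodes exists. The interest of the statement is not its difficulty but that it exhibits the first explicit tame set in a manifold other than $\CC^n$, with both the set and the interpolating automorphism written down concretely.
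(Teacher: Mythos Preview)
Your proof is correct. The three shears $L_{\id}$, $R_h$, $L_\ell$ are indeed holomorphic automorphisms of $\slgrp_2(\CC)$ (they are time-$1$ flows of the complete fields $b\,B$, $h(c)\,C$, $\ell(b)\,B$ in the paper's notation for Lemma~\ref{seq2}, since $b\in\ker B$ and $c\in\ker C$), and your bookkeeping checks out line by line.

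Your route differs from the paper's. The paper proves Lemma~\ref{seq1} using \emph{conjugation} shears, i.e.\ the fields $V$ and $W$ coming from $x\mapsto gxg^{-1}$, and the interpolation data involve $\sqrt{\ell(k)/k}$. You instead use \emph{one-sided multiplication} shears (left by lower-triangular, right by upper-triangular), exactly the mechanism the paper employs later for Lemma~\ref{seq2}; this avoids the square root and makes the computation shorter. Both approaches yield volume-preserving automorphisms, so the remark following the paper's proof applies equally to yours. The advantage of the paper's conjugation argument is structural: it is the template for Proposition~\ref{prop-simplegroup}, where an immersion $i\colon\slgrp_2(\CC)\to G$ is given and conjugation by $i(\slgrp_2(\CC))$ provides the needed complete fields on $G$. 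Your one-sided shears would also transplant to that setting, but the conjugation picture matches the quotient-by-$\slgrp_2$ machinery used there more directly.
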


Before we start with the actual proof of the lemma, let us recall some facts about the special linear group $\slgrp_2(\CC)$ and its Lie algebra $\slalg_2(\CC)$ in the adjoint representation.

In the following we will fix the coordinates for
\[
\slgrp_2(\CC) = \left\{ \begin{pmatrix} a & b \\ c & d \end{pmatrix} \,:\, ad - bc = 1 \right\}
.\]

There are three $\CC$-complete vector fields that generate $\slalg_2(\CC)$ and correspond to conjugations:
\begin{align*}
V &= c \frac{\partial}{\partial a} + (d-a) \frac{\partial}{\partial b} - c  \frac{\partial}{\partial d} \\
W &= -b \frac{\partial}{\partial a} + (a-d) \frac{\partial}{\partial c} + b \frac{\partial}{\partial d} \\
H &= -2b \frac{\partial}{\partial b} + 2c \frac{\partial}{\partial c}
\end{align*}
satisfying $[V, W] = H$, $[H,V] = 2V$ and $[H, W] = 2W$,
with their respective flows
\begin{align*}
\varphi_V^t &= \begin{pmatrix} 1 & t \\ 0 & 1 \end{pmatrix} \cdot \begin{pmatrix} a & b \\ c & d \end{pmatrix} \cdot\begin{pmatrix} 1 & -t \\ 0 & 1 \end{pmatrix} = \begin{pmatrix}c t+a & t \left( d-c t\right) -a t+b\\ c & d-c t\end{pmatrix} \\
\varphi_W^t &= \begin{pmatrix} 1 & 0 \\ t & 1 \end{pmatrix} \cdot \begin{pmatrix} a & b \\ c & d \end{pmatrix} \cdot \begin{pmatrix} 1 & 0 \\ -t & 1 \end{pmatrix} = \begin{pmatrix}a-b t & b \\ t \left( a-b t\right) -d t+c & b t+d\end{pmatrix}  \\
\varphi_H^t &= \begin{pmatrix} e^{-t} & 0 \\ 0 & e^{t} \end{pmatrix} \cdot \begin{pmatrix} a & b \\ c & d \end{pmatrix} \cdot \begin{pmatrix} e^{t} & 0 \\ 0 & e^{-t} \end{pmatrix}
\end{align*}

We also have the following relations for the kernels:
\begin{align*}
\ker {V} &\supseteq \CC[c,a+d] \\
\ker {W} &\supseteq \CC[b,a+d] 
\end{align*}
And for the second kernels we have:
\begin{align*}
\ker {V^2} &\supseteq \CC[a,c,d] \\
\ker {W^2} &\supseteq \CC[a,b,d] 
\end{align*}
Note that $ad - bc = 1$ is trivially contained in all the kernels.

\begin{proof}
We are given an injective map $\ell \colon \NN \to \NN$ that prescribes the injection
\[
\begin{pmatrix}
1 & k \\
0 & 1
\end{pmatrix}
\mapsto
\begin{pmatrix}
1 & \ell(k) \\
0 & 1
\end{pmatrix}
.\]

Next, we construct an interpolating holomorphic automorphism of $\slgrp_2(\CC)$ as a composition of suitable time-$1$ maps of complete vector fields corresponding to conjugations.

The desired automorphism is given by:
\[
\varphi_W^{G} \circ \varphi_V^{F}  \circ \varphi_W^1 
\]
where $F$ and $G$ are holomorphic functions given as follows:
\begin{enumerate}
\item \[ F\left( \begin{pmatrix} a & b \\ c & d \end{pmatrix} \right) = f(-c) \]
By the Mittag-Leffler interpolation theorem we find a holomorphic function $f \colon \CC \to \CC$ such that $f(k) = \sqrt{\frac{\ell(k)}{k}} - 1$ for all $k \in \NN$. The root can be chosen arbitrary. Note that $F \in \ker V$, hence $F \cdot V$ is $\CC$-complete and $\varphi_V^{F}$ is a holomorphic automorphism.
\item \[ G\left( \begin{pmatrix} a & b \\ c & d \end{pmatrix} \right) = g(b) \]
Note that $\ell \colon \NN \to \NN$ is injective and has in particular a closed discrete image. Again by the Mittag-Leffler interpolation theorem we find a holomorphic function $g \colon \CC \to \CC$ such that $g(\ell^{-1}(k)) = - \frac{1}{1 + f(k)} - 1$ for all $k \in \NN$. This is well defined, since $f(k) \neq - 1$ for all $k \in \NN$ by construction. Note that $G \in \ker W$, hence $G \cdot W$ is $\CC$-complete and $\varphi_V^{F}$ is a holomorphic automorphism.
\end{enumerate}
We now check that we indeed interpolate correctly:
\begin{align*}
\begin{pmatrix} 1 & k \\ 0 & 1 \end{pmatrix}
&\stackrel{\varphi_W^1}{\mapsto}
\begin{pmatrix} 1-k & k \\ -k & k+1\end{pmatrix} \\
&\stackrel{\varphi_V^F}{\mapsto}
\begin{pmatrix} -k f(k) -k+1 & k \cdot \left( 1+f(k) \right)^2 \\ -k & k \cdot f(k) + k+1 \end{pmatrix} \\
&\stackrel{\varphi_W^G}{\mapsto} 
\begin{pmatrix} 1 & k\cdot \left( 1+ f(k)\right)^2 \\ 0 & 1\end{pmatrix}
=
\begin{pmatrix} 1 & \ell(k) \\ 0 & 1 \end{pmatrix} \qedhere
\end{align*}

\end{proof}

\begin{remark}
The $\aut(\slgrp_2(\CC))$-tame set above is easily seen to be also $\aut_\omega(\slgrp_2(\CC))$-tame where $\omega$ is the bi-invariant Haar form on $\slgrp_2(\CC)$, since we use only shears of $\slgrp_2(\CC)$-actions by left- or right-multiplication, and shears do not change the vanishing $\omega$-divergence.
\end{remark}

\begin{lemma} \label{seq2}
The set
\[
\left\{ \begin{pmatrix}
k & 0 \\
0 & \frac{1}{k}
\end{pmatrix} \,:\, k \in \NN \right\} \subset \slgrp_2(\CC)
\]
is $G$-tame for $G = \aut(\slgrp_2(\CC))$.
\end{lemma}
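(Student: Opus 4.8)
Given the injective map $\ell \colon \NN \to \NN$ we must produce $F \in \aut(\slgrp_2(\CC))$ with $F(D_k) = D_{\ell(k)}$ for all $k$, where $D_k = \begin{pmatrix} k & 0 \\ 0 & 1/k \end{pmatrix}$. As in Lemma~\ref{seq1}, the plan is to write $F$ as a short composition of time-$1$ maps of sheared $\CC$-complete vector fields; the new feature is that $D_k$ and $D_{\ell(k)}$ have different eigenvalues, so the conjugation flows $\varphi_V,\varphi_W,\varphi_H$ — which preserve conjugacy classes — cannot suffice, and one must bring in automorphisms coming from left multiplication. To that end consider the vector fields
\[
L = a\,\frac{\partial}{\partial c} + b\,\frac{\partial}{\partial d}, \qquad E = -a\,\frac{\partial}{\partial a} - b\,\frac{\partial}{\partial b} + c\,\frac{\partial}{\partial c} + d\,\frac{\partial}{\partial d},
\]
whose flows are $M \mapsto \begin{pmatrix} 1 & 0 \\ t & 1 \end{pmatrix}M$ and $M \mapsto \begin{pmatrix} e^{-t} & 0 \\ 0 & e^{t} \end{pmatrix}M$; both are $\CC$-complete, and since $a \in \ker L$ and $ac \in \ker E$, the fields $h(a)\cdot L$ and $\rho(ac)\cdot E$ are $\CC$-complete for all entire $h,\rho$, with time-$1$ maps in $\aut(\slgrp_2(\CC))$.

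I would then take $F = \phi_3 \circ \phi_2 \circ \phi_1$. First, $\phi_1$ is left multiplication by the fixed matrix $\begin{pmatrix} 1 & 0 \\ 1 & 1 \end{pmatrix}$ (the time-$1$ map of $L$); it sends $D_k$ to $\begin{pmatrix} k & 0 \\ k & 1/k \end{pmatrix}$. Its sole purpose is to push the set off the maximal torus: on $D_k$ the function $ac$ vanishes identically, whereas on $\phi_1(D_k)$ one has $ac = k^2$, a closed discrete sequence in $\CC$ along which Mittag-Leffler interpolation is available. Second, $\phi_2$ is a shear of $E$, namely $M \mapsto \begin{pmatrix} m(ac) & 0 \\ 0 & 1/m(ac) \end{pmatrix}M$ with $m$ entire and nowhere zero (say $m = e^{-\rho}$ for a suitable entire $\rho$) chosen so that $m(k^2) = \ell(k)/k$; this is the step that changes the eigenvalues, sending $\begin{pmatrix} k & 0 \\ k & 1/k \end{pmatrix}$ to $\begin{pmatrix} \ell(k) & 0 \\ k^2/\ell(k) & 1/\ell(k) \end{pmatrix}$, a matrix whose first column now has top entry $a = \ell(k)$. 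Third, $\phi_3$ is the shear $M \mapsto \begin{pmatrix} 1 & 0 \\ h(a) & 1 \end{pmatrix}M$ of $L$; since $\ell$ is injective the sequence $\{\ell(k)\}_{k}$ is closed discrete in $\CC$ and $k$ is a function of $\ell(k)$ along it, so we may pick $h$ entire with $h(\ell(k)) = -k^2/\ell(k)^2$, and then $\phi_3$ cancels the lower-left entry, carrying $\begin{pmatrix} \ell(k) & 0 \\ k^2/\ell(k) & 1/\ell(k) \end{pmatrix}$ to $D_{\ell(k)}$.

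What remains is the routine $2 \times 2$ verification of the three displayed assignments (together with the trivial check that each $\phi_i$ has determinant $1$, so that it really maps the affine quadric $\slgrp_2(\CC)$ to itself and is an automorphism). The only conceptual obstacle is the point raised above: conjugations cannot alter the spectrum, so genuine left translations are needed, and before the torus-type shear $\phi_2$ can act differently at different points of the set one has to leave the diagonal torus — every function invariant under $M \mapsto \begin{pmatrix} e^{-t} & 0 \\ 0 & e^{t} \end{pmatrix}M$, namely $ac,\ ad,\ bc,\ bd$, is constant on $\{D_k\}$, which is precisely why the preliminary move $\phi_1$ is required. Injectivity of $\ell$ is used exactly once, in the choice of $h$ in the third step.
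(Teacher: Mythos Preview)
Your proof is correct and shares with the paper the same opening move: left-multiplying by $\begin{pmatrix}1&0\\1&1\end{pmatrix}$ to push the sequence off the diagonal torus so that a kernel function separates the points. After that, however, the two arguments diverge. The paper stays entirely within \emph{unipotent} shears: it uses the three vector fields $A,B,C$ coming from left- and right-multiplication by strictly triangular matrices, with kernels $\CC[c,d]$, $\CC[a,b]$, $\CC[a,c]$, and builds the interpolating automorphism as a composition of four shears $H\circ G\circ F\circ\varphi_B^1$ (the functions $f,g,h$ depending on $c$, $a$, $a$ respectively). Your approach instead invokes the \emph{torus} field $E$ (left multiplication by $\mathrm{diag}(e^{-t},e^{t})$) and a single overshear $\rho(ac)\cdot E$ to change the eigenvalues in one stroke, followed by one unipotent shear to clear the off-diagonal entry; this cuts the composition to three maps and is conceptually more direct given your own observation that conjugations cannot change the spectrum. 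The price is that $\phi_2$ is an overshear rather than a shear (its flow is not algebraic), though since $ac\in\ker E$ and $E$ is divergence-free for the Haar form, your automorphisms are still volume-preserving, so nothing is lost relative to the paper's remark after Lemma~\ref{seq1}.
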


To prove this fact we will use a combination of left and right multiplication. Consider the complete vector fields

\begin{align*}
A &= c \frac{\partial}{\partial a} + d \frac{\partial}{\partial b} \\
B &= a \frac{\partial}{\partial c} +b \frac{\partial}{\partial d} \\
C &= a \frac{\partial}{\partial b} +c \frac{\partial}{\partial d}
\end{align*}

and the respective flows

\begin{align*}
\varphi_A^t &= \begin{pmatrix} 1 & t \\ 0 & 1 \end{pmatrix} \cdot \begin{pmatrix} a & b \\ c & d \end{pmatrix} \\
\varphi_B^t &= \begin{pmatrix} 1 & 0 \\ t & 1 \end{pmatrix} \cdot \begin{pmatrix} a & b \\ c & d \end{pmatrix} \\
\varphi_C^t &= \begin{pmatrix} a & b \\ c & d \end{pmatrix} \cdot \begin{pmatrix} 1 & t \\ 0 & 1 \end{pmatrix}
\end{align*}

In this case we have the following:

\begin{align*}
\ker {A} &\supseteq \CC[c,d] \\
\ker {B} &\supseteq \CC[a,b] \\
\ker {C} &\supseteq \CC[a,c] 
\end{align*}

\begin{proof}
Denote the given injective map by
\[
\begin{pmatrix}
k & 0 \\
0 & \frac{1}{k}
\end{pmatrix}
\mapsto
\begin{pmatrix}
\ell(k) & 0 \\
0 & \frac{1}{\ell(k)}
\end{pmatrix}
.\]

As before, we will obtain the interpolating automorphism as a composition of time-1 flows of suitably defined vector fields. Choose holomorphic functions $f=f(c) \in \ker A$, $g = g(a) \in \ker B$ and $h =h(a)\in \ker C$ such that
\begin{align*}
f\begin{pmatrix}
k & 0 \\
k & \frac{1}{k}
\end{pmatrix}&=\frac{\ell(k)}{k}-1 \\
g\begin{pmatrix}
\ell(k) & \frac{\ell(k)}{k^2}-\frac{1}{k} \\
k & \frac{1}{k}
\end{pmatrix}&=-\frac{k}{\ell(k)} \\
h\begin{pmatrix}
\ell(k) & \frac{\ell(k)}{k^2}-\frac{1}{k} \\
0 & \frac{1}{\ell(k)}
\end{pmatrix}&=\frac{1}{k \ell(k)}-\frac{1}{k^2}.
\end{align*}

Let $F,G,H \in \aut(\slgrp_2(\CC))$ be the time-1 flows of $fA, gB$ and $hC$ respectively. Then
\[
H\circ G\circ F \left(\begin{pmatrix}
1 & 0 \\
1 & 1
\end{pmatrix}
\begin{pmatrix}
k & 0 \\
0 & \frac{1}{k}
\end{pmatrix}
\right)=\begin{pmatrix}
\ell(k) & 0 \\
0 & \frac{1}{\ell(k)}
\end{pmatrix}.
\]
\end{proof}

%
%

\begin{cor}
There exists $F \in \aut(\slgrp_2(\CC))$ such that 
\[
F \begin{pmatrix}
k & 0 \\
0 & \frac{1}{k}
\end{pmatrix} =  \begin{pmatrix}
1 & k \\
0 & 1
\end{pmatrix}
\]
for all $k \in \NN$.
\end{cor}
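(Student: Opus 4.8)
The plan is to produce $F$ explicitly, in the constructive spirit of this section, as a short composition of the shear automorphisms already introduced for the proof of Lemma~\ref{seq2}. There is also a soft route: $\slgrp_2(\CC)$ is a Stein manifold with the density property, and both sequences are $\aut(\slgrp_2(\CC))$-tame by Lemmas~\ref{seq1} and~\ref{seq2}; Proposition~\ref{prop-tame-equivalence} then supplies an automorphism sending one onto the other as a set, and composing with the automorphism furnished by Lemma~\ref{seq1} for the appropriate reindexing of $\{\left(\begin{smallmatrix}1&k\\0&1\end{smallmatrix}\right)\}_k$ gives the bijective correspondence $\left(\begin{smallmatrix}k&0\\0&1/k\end{smallmatrix}\right)\mapsto\left(\begin{smallmatrix}1&k\\0&1\end{smallmatrix}\right)$. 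I prefer the explicit argument here, as it stays inside $\slgrp_2(\CC)$ and does not invoke the density property.

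For the explicit construction, take $F=\varphi_B^{-b}\circ\varphi_A^{q(c)}\circ\varphi_C^{p(a)}\circ\varphi_B^{1}$, where $p,q\in\Olo(\CC)$ are to be chosen and $\varphi_C^{p(a)}$, $\varphi_A^{q(c)}$, $\varphi_B^{-b}$ are the time-$1$ flows of the complete fields $p(a)\cdot C$, $q(c)\cdot A$, $-b\cdot B$ (these are $\CC$-complete since $p(a)\in\ker C$, $q(c)\in\ker A$ and $-b\in\ker B$). Track the point $\left(\begin{smallmatrix}k&0\\0&1/k\end{smallmatrix}\right)$. The constant shear $\varphi_B^{1}$ sends it to $\left(\begin{smallmatrix}k&0\\k&1/k\end{smallmatrix}\right)$; the next two maps leave, respectively, the first column and the bottom row unchanged, so $\varphi_C^{p(a)}$ acts through the single value $p(k)$ and $\varphi_A^{q(c)}$ through the single value $q(k)$. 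A direct computation shows that the choices $p(k)=k+\frac{k-1}{k^{2}}$ and $q(k)=\frac{1-k}{k}$ turn the point successively into $\left(\begin{smallmatrix}k& k^{2}+\frac{k-1}{k}\\k&k^{2}+1\end{smallmatrix}\right)$, then into $\left(\begin{smallmatrix}1&k\\k&k^{2}+1\end{smallmatrix}\right)$, and finally, after $\varphi_B^{-b}$ (which acts through the value $-k$), into $\left(\begin{smallmatrix}1&k\\0&1\end{smallmatrix}\right)$, as required. The entire functions $p,q$ taking these values on $\NN$ exist by the Mittag--Leffler interpolation theorem, $\NN$ being closed and discrete in $\CC$.

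The one point that genuinely needs attention---and the reason for prepending $\varphi_B^{1}$---is that every interpolation must be carried out along a closed discrete set: one cannot prescribe values along the sequence $1/k$, which accumulates at $0$. After $\varphi_B^{1}$ the lower-left entry equals $k$, making the choice of $p(k)$ legitimate; $\varphi_C^{p(a)}$ preserves that entry, making the choice of $q(k)$ along it legitimate; and after $\varphi_A^{q(c)}$ the upper-right entry equals $k$, so the final shear, needing only the value $-k$ there, simultaneously clears the lower-left entry to $0$ and normalizes the lower-right entry to $1$. Beyond this, the proof is just the verification of the four $2\times 2$ matrix identities above (with determinant $1$ throughout), which is routine.
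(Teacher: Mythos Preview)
Your proof is correct. I verified the four-step shear computation and it lands exactly on $\left(\begin{smallmatrix}1&k\\0&1\end{smallmatrix}\right)$; the kernel conditions $p(a)\in\ker C$, $q(c)\in\ker A$, $-b\in\ker B$ hold, and the interpolations for $p,q$ are taken along $\NN$, which is closed and discrete.

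The paper's own proof is precisely your ``soft route'': it invokes Lemmas~\ref{seq1} and~\ref{seq2} to get tameness of both sets, cites the density property of $\slgrp_2(\CC)$, and then appeals to Proposition~\ref{prop-tame-equivalence}. It does not give an explicit $F$. Your explicit construction is therefore a genuinely different argument: it is self-contained, stays within the shear calculus of this section, and avoids the density property and the Anders\'en--Lempert-type limiting process behind Proposition~\ref{prop-tame-equivalence}. What the soft route buys is uniformity---it works without fresh computation whenever two tame sets are at hand---whereas your route buys constructivity and a strictly smaller toolbox. You also make explicit the reindexing step (composing with the automorphism from Lemma~\ref{seq1} to pass from $F(A)=B$ as sets to the prescribed pointwise correspondence), which the paper's one-line proof leaves implicit.
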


\begin{proof}
Lemmas \ref{seq1} and \ref{seq2} give that the two sequences are tame. Since $\slgrp_2(\CC)$ has the density property \cite{TothVarolin2000}, the result is a consequence of Proposition \ref{prop-tame-equivalence}.
\end{proof}

\section{Existence results}
\label{sec-lineargroups}

\begin{lemma}
\label{lem-product}
Let $X$ and $Y$ be Stein manifolds and let $V$ resp.\ $W$ be a complete holomorphic vector field on $X$ resp.\ $Y$ with at least one non-periodic unbounded orbit. Then there exists an $\aut(X \times Y)$-tame set in $X \times Y$.
\end{lemma}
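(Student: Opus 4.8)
The plan is to put the tame set along the two given orbits and to realize any self-injection of it by a composition of two \emph{shears} of the product. First I fix $p_0\in X$ on a non-periodic unbounded orbit of $V$, so that $\gamma\colon\CC\to X$, $\gamma(t):=\varphi_V^t(p_0)$, is injective with $\gamma(\CC)$ not relatively compact, and likewise $q_0\in Y$ with $\delta\colon\CC\to Y$, $\delta(s):=\varphi_W^s(q_0)$. Fixing an exhaustion $K_1\subset K_2\subset\cdots$ of $X$ by compact sets, the set $\{t\in\CC:\gamma(t)\notin K_m\}$ is nonempty (the orbit is unbounded) and open (by continuity of $\gamma$), hence infinite, so I may choose pairwise distinct $t_n\in\CC$ with $\gamma(t_n)\notin K_n$; then $T:=\{\gamma(t_n):n\in\NN\}$ meets each $K_m$ in a finite set and is therefore closed and discrete in $X$. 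In the same way I choose pairwise distinct $s_n\in\CC$ with $S:=\{\delta(s_n):n\in\NN\}$ closed and discrete in $Y$, and I set
\[
A:=\{(\gamma(t_n),\delta(s_n)):n\in\NN\}\subset X\times Y .
\]
It is infinite, and it is closed and discrete because $\pi_X(A)=T$ is closed and discrete while $\pi_X|_A$ is injective (an accumulation point of $A$ would project to one of $T$).

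Next I record the shear automorphisms. If $Z$ is a $\CC$-complete holomorphic vector field on a complex manifold and $h$ is holomorphic with $Z(h)=0$, then $hZ$ is again $\CC$-complete, with flow $p\mapsto\varphi_Z^{\,t\,h(p)}(p)$. On $X\times Y$ the lift of $V$ differentiates only along the $X$-factor, so $\beta\circ\pi_Y$ is one of its first integrals for every $\beta\in\Olo(Y)$, and the time-one map of $(\beta\circ\pi_Y)$ times this lift is the automorphism
\[
\Phi_\beta(x,y)=\bigl(\varphi_V^{\beta(y)}(x),\,y\bigr) ;
\]
symmetrically, $\Psi_\alpha(x,y)=\bigl(x,\,\varphi_W^{\alpha(x)}(y)\bigr)$ is an automorphism for every $\alpha\in\Olo(X)$.

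To finish, let $f\colon A\to A$ be injective. Since $\pi_X|_A$ is injective, $f$ determines a unique injective $\sigma\colon\NN\to\NN$ with $f(\gamma(t_n),\delta(s_n))=(\gamma(t_{\sigma(n)}),\delta(s_{\sigma(n)}))$. Because $X$ is Stein and $T$ is closed discrete with pairwise distinct nodes, I can solve the interpolation problem $\alpha(\gamma(t_n))=s_{\sigma(n)}-s_n$ with $\alpha\in\Olo(X)$; because $Y$ is Stein and $\{\delta(s_{\sigma(n)}):n\in\NN\}$ is a closed discrete subset of $S$ with pairwise distinct nodes (here injectivity of $\delta$, of $(s_n)$ and of $\sigma$ is used, so that there is no compatibility constraint), I can solve $\beta(\delta(s_{\sigma(n)}))=t_{\sigma(n)}-t_n$ with $\beta\in\Olo(Y)$. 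Then $F:=\Phi_\beta\circ\Psi_\alpha\in\aut(X\times Y)$ and, using $\varphi_W^{a}(\delta(s))=\delta(s+a)$ and $\varphi_V^{a}(\gamma(t))=\gamma(t+a)$,
\[
F(\gamma(t_n),\delta(s_n))=\Phi_\beta\bigl(\gamma(t_n),\delta(s_{\sigma(n)})\bigr)=\bigl(\gamma(t_{\sigma(n)}),\delta(s_{\sigma(n)})\bigr)=f(\gamma(t_n),\delta(s_n)),
\]
so $F|_A=f$, proving that $A$ is $\aut(X\times Y)$-tame.

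The only place the hypotheses are genuinely used is the solvability of these three interpolation problems, so the step I expect to require the most care is the bookkeeping that $T$ and $S$ — and hence every subset of them indexed by an injective $\sigma$ — are closed and discrete in the respective Stein factors (this is exactly what an \emph{unbounded} orbit provides) and that the interpolation nodes are pairwise distinct (this is what \emph{non-periodicity}, i.e.\ injectivity of $\gamma$ and $\delta$, provides). The completeness of the sheared fields on the product is routine. If "non-periodic" is only meant to exclude orbits admitting a full lattice of periods, the same argument works once the $t_n$, resp.\ $s_n$, are chosen pairwise distinct modulo the respective period subgroups.
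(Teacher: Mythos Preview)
Your proof is correct and rests on the same mechanism as the paper's: shear automorphisms of the product built from the complete fields $V,W$ together with holomorphic interpolation on the Stein factors. The only difference is in the bookkeeping. The paper places its tame set in a single slice, $A=\{\varphi_V^n(x_0):n\in\NN\}\times\{y_0\}$, and therefore needs a composition of \emph{three} shears
\[
F_3\circ F_2\circ F_1,\qquad F_1(x,y)=(x,\varphi_W^{h(x)}(y)),\quad F_2(x,y)=(\varphi_V^{f(y)}(x),y),\quad F_3(x,y)=(x,\varphi_W^{g(x)}(y)),
\]
the first to spread the points into the $Y$-direction, the second to perform the permutation in $X$, the third to collapse back to the slice. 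You instead place the tame set on the ``diagonal'' $\{(\gamma(t_n),\delta(s_n))\}$, which is already spread in $Y$, so two shears $\Phi_\beta\circ\Psi_\alpha$ suffice. Your version is marginally more economical; the paper's has the cosmetic advantage that the tame set sits inside a copy of $X$. Either way the substance --- completeness of $(\text{first integral})\cdot V$ and solvability of interpolation on closed discrete subsets of Stein manifolds --- is identical. (Minor slip: you speak of ``three interpolation problems'' but only pose two, for $\alpha$ and $\beta$.)
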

\begin{proof}
Let $\phi_V^t$ resp.\ $\phi_W^t$ denote the flow map of the non-zero complete holomorphic vector field $V$ resp.\ $W$ on $X$ resp.\ $Y$.


Choose $x_0 \in X$ resp.\ $y_0 \in Y$ such that its orbit under $\phi_V^t$ resp.\ $\phi_W^t$ is non-periodic and unbounded. We define the tame set $A$ to be $A := \{ \phi_V^n(x_0) \,:\, n \in \NN \} \times \{ y_0 \} \subset X \times Y$. By assumption, $A$ is closed and discrete.

We need to show that for any injective map $\ell \colon \NN \to \NN$ we find a holomorphic automorphism $F$ of $X \times Y$ such that $F( \phi_V^n(x_0), y_0 ) = ( \phi_V^{\ell(n)}(x_0), y_0 )$.

We construct this automorphism as a composition $F = F_3 \circ F_2 \circ F_1$ where
\begin{align*}
F_1(x, y) &= (x, \phi_W^x(y)) \\
F_2(x, y) &= (\phi_V^{f(y)}(x), y)) \\
F_3(x, y) &= (x, \phi_W^{g(x)}(y))
\end{align*}
where we choose holomorphic functions $f \colon Y \to \CC$ and $g \colon X \to \CC$ as follows:
\begin{align*}
f( \phi_W^n(y_0) ) &= \ell(n) - n \\
g( \phi_V^{\ell(n)}(x_0) ) &= -n
\end{align*}
Such functions exist, since we can prescribe the values of holomorphic function on a closed discrete set in a Stein manifold.
\end{proof}

\begin{cor}
For $m, n \in \NN$ with $m + n \geq 2$ the manifold $\CC^m \times (\CC^\ast)^n$ contains a tame set.
\end{cor}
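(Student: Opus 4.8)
The plan is to deduce the corollary directly from Lemma~\ref{lem-product}. The idea is to write $\CC^m \times (\CC^\ast)^n$ (with $m+n\ge 2$) as a product $X \times Y$ of two \emph{nonempty} Stein manifolds, each again of the form $\CC^a \times (\CC^\ast)^b$, and to equip each factor with a complete holomorphic vector field of the kind the lemma needs.

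First I would record the two building blocks. On $\CC$ take the translation field $V=\partial/\partial z$: it is complete, its orbit through $0$ is all of $\CC$ (hence non-periodic and unbounded), and $\{\phi_V^k(0):k\in\NN\}=\NN$ is closed and discrete. On $\CC^\ast$ take the Euler field $V=z\,\partial/\partial z$: it is complete, its orbit through $1$ is all of $\CC^\ast$ (non-compact and unbounded), and $\{\phi_V^k(1):k\in\NN\}=\{e^k:k\in\NN\}$ is closed and discrete in $\CC^\ast$, the points leaving every compact subset. Applying one of these to the first coordinate of a product $\CC^a\times(\CC^\ast)^b$ with $a+b\ge 1$, and extending by zero in the remaining coordinates, gives a complete field on the product whose $\NN$-orbit through a suitable base point is closed and discrete there.

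Now, since $m+n\ge 2$, split off one coordinate: set $X:=\CC$ if $m\ge 1$ and $X:=\CC^\ast$ if $m=0$ (so $n\ge 2$), and let $Y$ be the product of the remaining copies of $\CC$ and $\CC^\ast$; then $Y$ is nonempty and of the same type, and both $X$ and $Y$ are Stein. Endowing $X$ and $Y$ with the fields above, Lemma~\ref{lem-product} yields an $\aut(\CC^m\times(\CC^\ast)^n)$-tame set. (For $n=0$, $m\ge 2$ this already reproduces the classical tame set $\NN\times\{0\}\subset\CC^m$.)

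The only point requiring care — and the one mild obstacle — is that the Euler field on $\CC^\ast$ is periodic as a $\CC$-action, $\phi_V^{2\pi i}=\id$, so a factor equal to $\CC^\ast$ does not literally meet the hypothesis ``non-periodic unbounded orbit'' of Lemma~\ref{lem-product} under its strictest reading (this concerns only small values of $m,n$, such as $(m,n)=(1,1)$ or $(0,2)$). I would resolve it by observing that the proof of Lemma~\ref{lem-product} uses only that $\{\phi_V^k(x_0)\}_k$ and $\{\phi_W^k(y_0)\}_k$ are infinite, closed and discrete and that $k\mapsto\phi_W^k(y_0)$ and $k\mapsto\phi_V^{\ell(k)}(x_0)$ are injective — all of which hold for the Euler field because $e^k=e^{k'}$ forces $k=k'$ for $k,k'\in\NN$. (Equivalently, reading ``non-periodic'' as ``orbit not compact'' lets $\CC^\ast$ satisfy the hypothesis verbatim, its orbit $\CC^\ast$ being non-compact.) With this remark the construction goes through uniformly in all cases, completing the proof.
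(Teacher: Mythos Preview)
Your argument is essentially the same as the paper's: apply Lemma~\ref{lem-product} using the translation field $\partial/\partial z$ on $\CC$ and the Euler field $z\,\partial/\partial z$ on $\CC^\ast$. You are in fact more careful than the paper, which simply asserts that ``the assumptions of the preceding lemma are satisfied'' without commenting on the $2\pi i$-periodicity of the Euler flow; your observation that the proof of Lemma~\ref{lem-product} only needs the integer-time orbits $\{\phi^k(x_0):k\in\NN\}$ to be injective, closed and discrete (which $\{e^k\}_{k\in\NN}\subset\CC^\ast$ certainly is) is the right way to close that small gap.
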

\begin{proof}
Note that $z \frac{\partial}{\partial z}$ is a complete holomorphic vector field on $\CC^\ast$ resp.\ $\frac{\partial}{\partial z}$ is a complete holomorphic vector field on $\CC$ such that the assumptions of the preceding lemma are satisfied.
\end{proof}

Due to an important application for a large class of affine surfaces, we give another variation of this statement and proof.
\begin{lemma}
Let $X=\CC_z \times \CC^\ast_w$. Consider the complete flow maps 
$\varphi^t(z,w) = (z + w^m t, w)$ of $w^m \frac{\partial}{\partial z}$ and 
$\psi^t(z,w) = (z, \exp(z^m t) w)$ of $z^m w \frac{\partial}{\partial w}$.
Then the tameness of the set $A = \left\{ (1, n) \,:\, n \in \NN \right\}$ can be established using only the flows $\varphi^t$ and $\psi^t$.
\end{lemma}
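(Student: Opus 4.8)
The plan is to reuse the three-shear scheme of Lemma~\ref{lem-product}, but carried out with the two prescribed flows only. First note that $A$ is closed and discrete in $X$, since $n\to\infty$ in $\CC^\ast_w$. Given an arbitrary injection $\ell\colon\NN\to\NN$ encoding the map $(1,n)\mapsto(1,\ell(n))$, I will build the interpolating automorphism as a composition $F=F_3\circ F_2\circ F_1$, where each $F_j$ is obtained from $\varphi^t$ or $\psi^t$ by replacing the time parameter with a holomorphic function of the coordinate that the corresponding flow leaves invariant: a function of $w$ for $\varphi^t$, a function of $z$ for $\psi^t$. These ``variable-time'' maps are genuine automorphisms of $X$, because the chosen time functions lie in the kernels of the respective vector fields ($w^m\partial/\partial z$ annihilates functions of $w$, and $z^m w\,\partial/\partial w$ annihilates functions of $z$): explicitly $\varphi^{f}(z,w)=(z+w^m f(w),\,w)$ for $f\in\Olo(\CC^\ast)$, with inverse $(z,w)\mapsto(z-w^m f(w),w)$, and $\psi^{g}(z,w)=(z,\,\exp(z^m g(z))\,w)$ for $g\in\Olo(\CC)$, with inverse $(z,w)\mapsto(z,\exp(-z^m g(z))\,w)$ (note $\exp(z^m g(z))\in\CC^\ast$ automatically).

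Then I would pick $F_1,F_2,F_3$ so as to guide the point $(1,n)$ through the stages
\[
(1,n)\ \xrightarrow{\,F_1\,}\ (n,n)\ \xrightarrow{\,F_2\,}\ (n,\ell(n))\ \xrightarrow{\,F_3\,}\ (1,\ell(n)).
\]
For $F_1=\varphi^{f_1}$ I need $1+n^m f_1(n)=n$, i.e.\ $f_1(n)=(n-1)/n^m$; since $\{n:n\in\NN\}$ is closed and discrete in $\CC^\ast$, the interpolation theorem on the Stein manifold $\CC^\ast$ provides such an $f_1\in\Olo(\CC^\ast)$. For $F_2=\psi^{g_2}$ I need $\exp(n^m g_2(n))=\ell(n)/n$, so for each $n$ I choose a value $c_n$ of $\log(\ell(n)/n)$ and set $g_2(n)=c_n/n^m$, interpolating to $g_2\in\Olo(\CC)$ on the closed discrete set $\{n\}\subset\CC$. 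For $F_3=\varphi^{f_3}$ I need $n+\ell(n)^m f_3(\ell(n))=1$, i.e.\ $f_3(\ell(n))=(1-n)/\ell(n)^m$; the injectivity of $\ell$ makes this assignment well defined on the closed discrete set $\{\ell(n):n\in\NN\}\subset\CC^\ast$, so interpolation again supplies $f_3\in\Olo(\CC^\ast)$. Composing, $F=F_3\circ F_2\circ F_1\in\aut(X)$ restricts on $A$ to the prescribed injection, and by construction it was assembled purely from $\varphi^t$ and $\psi^t$.

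I do not expect a serious obstacle here, as this is essentially a transcription of Lemma~\ref{lem-product} once the kernels of the two vector fields are identified. The only points that need care are: (i) verifying that the variable-time maps are well-defined automorphisms, which is immediate from the kernel conditions; and (ii) checking admissibility of the interpolation data at each stage — the factors $w^m$ and $z^m$ are harmless because they are evaluated only at the nonzero integers $n$, resp.\ $\ell(n)$, so one may divide by them, and the stage-three data is single-valued precisely because $\ell$ is injective.
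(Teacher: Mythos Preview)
Your proof is correct and follows exactly the paper's three-shear scheme, the only difference being that you compose in the order $\varphi,\psi,\varphi$ while the paper uses $\psi,\varphi,\psi$. Incidentally, your order is the one that actually matches the stated set $A=\{(1,n)\}$: tracing the paper's $F_1=\psi^{z}$ on $(1,n)$ yields $(1,e\,n)$, not a point with $w$-coordinate $\exp(n^{m+1})$ as the subsequent interpolation data presumes, so the paper's own argument is really written for the mirror set $\{(n,1)\}$.
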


\begin{proof}
Let $\ell \colon \NN \to \NN$ be any injective map. To establish tameness, we need to show that there exists a holomorphic automorphism mapping $(1, n)$ to $(1, \ell(n))$.

We construct this automorphism as a composition $F = F_3 \circ F_2 \circ F_1$ and define those three automorphisms as follows:
\begin{align*}
F_1(z, w) &= \psi^z(z, w) \\
F_2(z, w) &= \varphi^{g(w)}(z, w) \\
F_3(z, w) &= \psi^{h(z)}(z, w)
\end{align*}
where we choose the holomorphic functions $g$ and $h$ to have the following prescribed values:
\begin{align*}
g(\exp(n^{m+1})) &= \exp( -m n^{m+1}) \cdot (\ell(n) - n) \\
h(\ell(n)) &= -(\ell(n))^{-m} \cdot n^{m+1}
\end{align*}
Tracing the images of $(1, n)$ under the composition $F = F_3 \circ F_2 \circ F_1$, it is then easy to verify that $F$ is the desired automorphism.
\end{proof}

\begin{cor}
\label{cor-Gizatullin}
Every Gizatullin surface $X$ with a reduced singular fibre contains an $\aut(X)$-tame set.
\end{cor}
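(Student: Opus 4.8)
The plan is to realise a Zariski-open subset of $X$ as $\CC_z\times\CC^\ast_w$ in such a way that the two complete vector fields $w^{m}\,\partial/\partial z$ and $z^{m}w\,\partial/\partial w$ of the preceding lemma, as well as the three shears built from them in its proof, all extend to complete holomorphic vector fields and to holomorphic automorphisms of $X$. Once this is done, the discrete set furnished by that lemma will be closed and discrete in $X$ as well, and its $\aut(X)$-tameness will be immediate.

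First I would recall the structure of a Gizatullin surface $X$ with a reduced singular fibre (after Gizatullin, Danilov--Gizatullin, Flenner--Kaliman--Zaidenberg, Dubouloz): it carries an $\mathbb{A}^1$-fibration $\pi\colon X\to\CC$ with a single degenerate fibre $F_0=\pi^{-1}(p_0)$, which by hypothesis is reduced, hence a disjoint union of affine lines; since all fibres of $\pi$ are then reduced, $\pi$ is the algebraic quotient of a $\mathbb{G}_a$-action on $X$. Over $\CC\setminus\{p_0\}\cong\CC^\ast$ the morphism $\pi$ is a Zariski-locally trivial $\mathbb{A}^1$-bundle over an affine curve with trivial Picard group, hence trivial; using the reducedness of $F_0$ one chooses the trivialisation so that its fibre coordinate is the restriction of a regular function $z\in\Olo(X)$, and then $(z,\pi)$ identifies $U:=X\setminus F_0=\pi^{-1}(\CC\setminus\{p_0\})$ with $\CC_z\times\CC^\ast_w$, where $w:=\pi-p_0\in\Olo(X)$. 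The model to keep in mind is the Danielewski surface $X=\{xy=p(z)\}\subset\CC^{3}$ with $p$ squarefree, where $\pi=x$, $F_0=\{x=0\}$ is a disjoint union of $\deg p$ lines, and $U=\{x\neq0\}\cong\CC^\ast_x\times\CC_z$.

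Next I would produce the two vector fields. The generator $V_0$ of the $\mathbb{G}_a$-action with quotient $\pi$ is a complete algebraic vector field which on $U$ has the form $c\,w^{k}\,\partial/\partial z$ with $c\in\CC^\ast$, $k\geq1$ (in the model, $V_0=x\,\partial/\partial z+p'(z)\,\partial/\partial y$ and $k=1$); rescaling $z$ and passing to the replica $V:=w^{m-k}V_0$ — complete because $w^{m-k}\in\ker V_0$ — yields $V|_U=w^{m}\,\partial/\partial z$ for any $m\geq k$. For the transverse field I would use a complete holomorphic vector field $E$ on $X$ with $E|_U=w\,\partial/\partial w$ — in the model, the $\mathbb{G}_m$-action $x\,\partial/\partial x-y\,\partial/\partial y$ — and take the replica $W:=z^{m}E$, so that $W|_U=z^{m}w\,\partial/\partial w$. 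The flows of $V$ and $W$ restrict on $U$ to the maps $\varphi^t,\psi^t$ of the lemma and are holomorphic automorphisms of $X$ for each fixed $t$. It then remains to see that the three shears $F_1=\psi^{z}$, $F_2=\varphi^{g(w)}$, $F_3=\psi^{h(z)}$ occurring in the proof of the lemma extend to $X$: they are the time-one flows of $z^{m+1}E$, of $g(w)\,w^{m-k}V_0$, and of $z^{m}h(z)\,E$, and since $g,h$ may be taken entire (their interpolation nodes are closed discrete in $\CC$) all three coefficients lie in $\Olo(X)$ and in the kernels of $E$, resp.\ $V_0$, so these flows are automorphisms of $X$. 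The discrete set $A$ of the lemma lies in a closed curve isomorphic to $\CC$ or $\CC^\ast$ and escapes to infinity along it, hence is closed and discrete in $X$; and for every injective $\ell\colon\NN\to\NN$ the interpolating automorphism the lemma produces is, by the above, an element of $\aut(X)$. Therefore $A$ is $\aut(X)$-tame.

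The main obstacle is the algebro-geometric input used above: for a general Gizatullin surface with reduced degenerate fibre, producing the global coordinate $z\in\Olo(X)$ that trivialises $\pi$ over $\CC\setminus\{p_0\}$ and the complete vector field $E$ restricting to $w\,\partial/\partial w$ on $U$. This is precisely where the reducedness of $F_0$ enters, and it requires a careful analysis of the local structure of $X$ near $F_0$ via its zigzag completion — with the Danielewski surfaces as the model case in which the construction is explicit.
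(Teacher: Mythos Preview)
Your approach and the paper's coincide: both reduce to having an affine chart $U\cong\CC_z\times\CC^\ast_w$ of $X$ in which the vector fields $w^m\,\partial/\partial z$ and $z^m w\,\partial/\partial w$ of the preceding lemma extend to complete holomorphic vector fields on $X$, after which the shears $F_1,F_2,F_3$ --- being replicas of these fields by entire functions of $z$ resp.\ $w$, with $z\in\ker W$ and $w\in\ker V$ globally by density of $U$ --- extend to $\aut(X)$. The paper's proof is a one-line citation of \cite{AndristSurfaces} for exactly this chart and extension (for $m$ large enough, depending on $X$); you instead sketch how one might build it and correctly note that the algebro-geometric input is the hard part.

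One caveat in your sketch: you factor the second field as $W=z^m E$ with $E$ a complete field on $X$ restricting to $w\,\partial/\partial w$ on $U$. This is asking for more than is needed and possibly more than is available --- a global complete $E$ of this shape is essentially a $\CC^\ast$-action, and not every Gizatullin surface with reduced degenerate fibre carries one. What \cite{AndristSurfaces} actually supplies is that the \emph{combined} field $z^m w\,\partial/\partial w$ extends completely once $m$ is large; the factor $z^m$ is part of the mechanism enabling extension across $F_0$, not a replica taken afterwards. So your ``main obstacle'' is located in the right place but phrased too strongly: you do not need $E$ itself, only $z^m w\,\partial/\partial w$ for large $m$, and once you grant that, the rest of your argument (extension of the shears, and closedness of $A$ in $X$ via $|w|\to\infty$) goes through unchanged.
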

\begin{proof}
See \cite{AndristSurfaces} for an affine chart $\CC_z \times \CC^\ast_w$ of $X$ which is such that the vector fields $w^m \frac{\partial}{\partial z}$ and $z^m w \frac{\partial}{\partial w}$ extend as complete holomorphic vector fields to $X$ provided $m$ is chosen large enough, depending on $X$.
\end{proof}

\begin{example}
This includes for example the Danielewski surfaces which are given by
\[
\{ (x,y,z) \in \CC^3 \,:\, x^k y = p(z) \}
\]
where $p$ is a polynomial with simple roots.
A suitable affine coordinate chart is given by
\[
\CC^\ast \times \CC \ni (x, z) \mapsto (x, p(z)/x^k ,z)
\]
\end{example}

We already have many examples of manifolds admitting tame sets, yet we are still missing a somewhat general theorem concerning a large class of complex manifolds. What follows is some standard theory of invariant functions which we will use to discuss the case of Lie groups.

Given an affine algebraic manifold $X$ and a complete algebraic vector field $V$ whose flow $\phi_V^t$ is algebraic, we have an algebraic action of the additive group $(\CC, +)$:
\[
\begin{split}
\CC \times X& \to X \\
(t,x)& \mapsto \phi_V^t(x)
\end{split}
\]
This gives an action on the algebra of regular functions $\CC[X]$ on $X$. We observe that the functions invariant under this action are precisely the ones in the kernel of $V$. Therefore we are interested in the ring of invariant functions $ \CC[X]^{(\CC,+)}$ with respect to the action of $\phi_V^t$. If $ \CC[X]^{(\CC,+)}$ is finitely generated, its spectrum is the GIT quotient considered in geometric invariant theory and it is affine. When this ring is not finitely generated, we will use the following result of Winkelmann as a substitute:

\begin{theorem}[\cite{Winkelmann03}*{Theorem~3}] \label{Wink}
Let $k$ be a field, $\Omega$ an irreducible, reduced, normal $k$-variety and $G \subset \aut(V)$.

Then there exists a quasi-affine $k$-variety $Z$ and a rational map $\pi \colon \Omega \to Z$ such that
\begin{enumerate}
\item The rational map $\pi$ induces an inclusion $\pi^{*} \colon k[Z]\subset k[\Omega]$.
\item The image of the pull-back $\pi^*(k[Z])$ coincides with the ring of invariant functions $k[\Omega]^G$.
\item For every affine $k$-variety $\Theta$ and every $G$-invariant morphism $f \colon \Omega \to \Theta$ there exists a morphism $F \colon Z \to \Theta$ such that $F \circ \pi$ is a morphism and $f = F \circ \pi$.
\end{enumerate}
\end{theorem}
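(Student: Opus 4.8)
The plan is to realise the ring $R:=k[\Omega]^{G}\subseteq k[\Omega]$ of $G$-invariant regular functions as the ring of regular functions on a quasi-affine model of $K:=\mathrm{Frac}(R)$, and to let $\pi$ be the resulting dominant rational map; I treat the essential case in which $\Omega$ is affine. First I would record the two structural facts about $R$ on which everything hinges. Since $G$ fixes $R$ it fixes $K$ pointwise, so $R=k[\Omega]\cap K$; consequently $R$ is integrally closed in $K$, because any $f\in K$ integral over $R$ is integral over $k[\Omega]$, hence lies in $k[\Omega]$ by normality of $\Omega$, hence in $k[\Omega]\cap K=R$. Moreover $K$ is an intermediate field of the finitely generated extension $k(\Omega)/k$, hence is itself finitely generated over $k$; and $R$, being the invariant ring of the Krull domain $k[\Omega]$, is a Krull domain.

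Next I would build a finitely generated model. Choose finitely many elements of $R$ generating $K$ as a field, let $B$ be the integral closure in $K$ of the $k$-subalgebra they generate; by finiteness of normalisation $B$ is a finitely generated $k$-algebra, and $B\subseteq R$ with $\mathrm{Frac}(B)=K$. Put $Y:=\mathrm{Spec}\,B$, a normal affine variety with $k(Y)=K$; the inclusion $B\hookrightarrow k[\Omega]$ gives a dominant morphism $\psi\colon\Omega\to Y$, and $\pi$ will be its corestriction to the open subset $Z$ constructed now. Viewing each $f\in R$ as a rational function on the normal variety $Y$, with polar divisor $P_{f}$, set $Z:=Y\setminus\bigcup_{f\in R}P_{f}$. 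Since $\psi$ is dominant, $\psi(\Omega)$ contains a dense open $U\subseteq Y$ and $Y\setminus U$ has only finitely many codimension-one components $E_{1},\dots,E_{r}$; moreover no $f\in R$ can have a pole along a prime divisor of $Y$ meeting $U$, because pulling $f$ back along $\psi$ near the generic point of such a divisor would contradict the regularity of $f$ on $\Omega$. Hence every $P_{f}$ is a union of some of the $E_{i}$, so $\bigcup_{f}P_{f}$ is closed, $Z$ is open (hence quasi-affine), and $\pi^{*}$ is injective. Granting the identification $k[Z]=R$ established in the final step, all three assertions follow at once: $\pi^{*}$ then embeds $k[Z]=R$ into $k[\Omega]$ with image exactly $k[\Omega]^{G}$, giving (1) and (2), while (3) is formal, since a $G$-invariant morphism $f\colon\Omega\to\Theta$ into an affine variety has all its coordinate functions in $k[\Omega]^{G}=k[Z]$ and hence factors as $F\circ\pi$.

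The hard part — the one I expect to cost genuine work — is to arrange that in fact $k[Z]=R$, i.e.\ that one finitely generated normal model $Y$ sees all the essential valuations of $R$. Writing $R=\bigcap_{\mathfrak p}R_{\mathfrak p}$ over its height-one primes, each $R_{\mathfrak p}$ a discrete valuation ring of $K$, one wants to enlarge $B$ inside $R$ by finitely many elements until every $R_{\mathfrak p}$ becomes the local ring of a prime divisor of $Y$ meeting $Z$; the finite-character property of the Krull intersection, together with the finiteness already exploited for the openness of $Z$, should ensure that finitely many enlargements suffice. This is exactly the step where finite generation of $R$ may fail — the phenomenon behind Hilbert's fourteenth problem — so it cannot be glossed over; it is carried out in \cite{Winkelmann03}, while the remaining ingredients (the field-theoretic reductions, finiteness of normalisation, the openness of $Z$, and the universal property) are routine.
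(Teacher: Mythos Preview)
The paper does not prove this theorem at all: it is quoted verbatim from \cite{Winkelmann03} and used as a black box, with no argument supplied. So there is nothing in the paper to compare your attempt against.

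That said, your sketch is a reasonable outline of how Winkelmann's proof actually goes --- realise $R=k[\Omega]^G$ as a Krull domain with finitely generated fraction field, build a normal affine model $Y$ of $K$, and excise the polar loci of the elements of $R$ to obtain $Z$ --- and you correctly isolate the genuinely delicate point, namely arranging $k[Z]=R$ despite $R$ possibly not being finitely generated. You also honestly defer that step to \cite{Winkelmann03}. For the purposes of this paper, simply citing the result (as the authors do) is entirely appropriate; your outline would only be needed if one wanted to make the exposition self-contained.
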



When $G=(\CC,+)$ and the action is given by a complete vector field $V$, we will denote the quotient by $\pi_V \colon X \to Q_V$.
This is a useful construction to produce functions in the kernel of such a derivation and we will use it repeatedly in the upcoming discussion.

A crucial ingredient will also be actions of $\slgrp_2(\CC)$ which is a reductive group and therefore the ring of invariant functions is finitely generated, see e.g.\ the textbook of Freudenburg \cite{LND}*{Section~6.1}.

\begin{proposition} \label{prop-simplegroup}
Let $G$ be a linear algebraic group and assume there exists an algebraic Lie group homomorphism $i \colon \slgrp_2(\CC) \to G$ that is immersive. Then $G$ contains a $\aut(G)$-tame set.
\end{proposition}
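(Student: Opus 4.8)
The plan is to transfer the proof of Lemma~\ref{seq1} from $\slgrp_2(\CC)$ to $G$ along the immersion $i$. The only genuinely new point is that the two shear steps of that proof use holomorphic functions on $\slgrp_2(\CC)$ lying in $\ker V$ and $\ker W$; on $G$ we will need $\mathcal V$- resp.\ $\mathcal W$-invariant holomorphic functions on \emph{all} of $G$ with prescribed values on a closed discrete set, and this is where the reductivity of $\slgrp_2(\CC)$ enters.

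\emph{Setup.} Since $i$ is a morphism of algebraic groups its image $R := i(\slgrp_2(\CC))$ is a closed subgroup of $G$, and immersivity forces $\ker i$ to be finite, hence central; thus $R$ is a closed reductive subgroup isomorphic to $\slgrp_2(\CC)$ or $\mathrm{PSL}_2(\CC)$. The unipotent one-parameter subgroups $u_+(t) = i\bigl(\begin{smallmatrix}1&t\\0&1\end{smallmatrix}\bigr)$ and $u_-(t) = i\bigl(\begin{smallmatrix}1&0\\t&1\end{smallmatrix}\bigr)$ are closed embeddings of $\CC$, so $A := \{\, u_+(k) \,:\, k \in \NN \,\}$ is closed and discrete in $G$; this is our candidate tame set. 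Conjugation by $u_+(t)$ resp.\ $u_-(t)$ is a complete algebraic flow of automorphisms of $G$; let $\mathcal V$ resp.\ $\mathcal W$ be the generating holomorphic vector fields. Both flows map $R$ into itself, and on $R$ they correspond via $i$ to the conjugation flows $\varphi_V^t$ resp.\ $\varphi_W^t$ of Section~\ref{sec-sln}.

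\emph{The interpolating automorphism.} Given an injection $\ell \colon \NN \to \NN$, I would imitate $\varphi_W^{G} \circ \varphi_V^{F} \circ \varphi_W^1$ by setting $\Phi := \Psi_3 \circ \Psi_2 \circ \Psi_1$, where $\Psi_1$ is conjugation by $u_-(1)$ and $\Psi_2$ resp.\ $\Psi_3$ is the time-$1$ map of the complete vector field $\widehat F \cdot \mathcal V$ resp.\ $\widehat G \cdot \mathcal W$, for holomorphic functions $\widehat F \in \ker \mathcal V$ and $\widehat G \in \ker \mathcal W$ to be chosen. As in Section~\ref{sec-sln}, these are holomorphic automorphisms of $G$, and each $\Psi_j$ maps $R$ into itself. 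If $\widehat F$ and $\widehat G$ take at the relevant points the values prescribed for the functions $f,g$ in the proof of Lemma~\ref{seq1}, then the images of $u_+(k)$ under $\Psi_1$, then $\Psi_2\circ\Psi_1$, then $\Phi$, are exactly those computed there, and the computation ends at $\Phi(u_+(k)) = u_+(\ell(k))$; this is tameness of $A$.

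\emph{The main obstacle: the functions $\widehat F$ and $\widehat G$.} We need, for instance, a $\mathcal V$-invariant holomorphic function on $G$ taking the value $\sqrt{\ell(k)/k}-1$ at $\Psi_1(u_+(k))$, which is the image under $i$ of $\bigl(\begin{smallmatrix}1-k&k\\-k&k+1\end{smallmatrix}\bigr)$. Here I would exploit that $u_+(\CC)$ is a maximal unipotent subgroup of the reductive group $R$ acting on $G$ by conjugation, so $\CC[G]^{\mathcal V} = \CC[G]^{u_+(\CC)}$ is finitely generated (the ``reductivity of $\slgrp_2(\CC)$'' argument, i.e.\ Had\v{z}iev's theorem, cf.\ \cite{LND}*{Section~6.1}; alternatively invoke Theorem~\ref{Wink} and pass to an affine closure). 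Let $\pi_{\mathcal V} \colon G \to Q_{\mathcal V} := \operatorname{Spec}\CC[G]^{\mathcal V}$ be the resulting affine, hence Stein, quotient, and likewise $\pi_{\mathcal W} \colon G \to Q_{\mathcal W}$. In characteristic zero the $R$-equivariant surjection $\CC[G] \twoheadrightarrow \CC[R]$ splits as a map of $R$-modules, hence stays surjective after taking $u_+(\CC)$-invariants; so the function $c$ (or $c^2$, when $\ker i \neq \{\id\}$) — which lies in $\ker V$ and descends to $R$ — extends to an element of $\CC[G]^{\mathcal V}$, and the induced morphism $\operatorname{Spec}\CC[R]^{\mathcal V} \hookrightarrow Q_{\mathcal V}$ is a closed immersion. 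On the points $\Psi_1(u_+(k))$ this function takes the pairwise distinct, discretely placed values $-k$ (resp.\ $k^2$), so $\pi_{\mathcal V}$ is injective on $\Psi_1(A)$ and maps it to a closed discrete subset of $\operatorname{Spec}\CC[R]^{\mathcal V}$, hence of $Q_{\mathcal V}$. Cartan's Theorem~B on the Stein manifold $Q_{\mathcal V}$ then produces a holomorphic function with the prescribed values, and $\widehat F$ is its pullback along $\pi_{\mathcal V}$ (automatically $\mathcal V$-invariant); the same recipe with $\mathcal W$ and the function $b$ (or $b^2$), whose values on $\Psi_2\circ\Psi_1(u_+(k))$ are $\ell(k)$ (resp.\ $\ell(k)^2$), furnishes $\widehat G$. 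Since $\ell$ was an arbitrary injection, $A$ is $\aut(G)$-tame.
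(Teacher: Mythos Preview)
Your proof is correct and follows the same overall strategy as the paper: transport the three-shear construction of Lemma~\ref{seq1} to $G$ along the immersion $i$, and use the affine quotients $Q_{\mathcal V},Q_{\mathcal W}$ (available because the $(\CC,+)$-actions sit inside the reductive $\slgrp_2(\CC)$-action) to manufacture the needed invariant holomorphic functions.

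The one substantive difference lies in how the two arguments verify that the relevant points have closed, discrete, pairwise distinct images in $Q_{\mathcal V}$ and $Q_{\mathcal W}$. The paper proceeds abstractly: it checks, via a tangent-space computation at the identity, that the compositions $\alpha_V=\pi_V\circ\beta_V$ and $\alpha_W=\pi_W\circ\beta_W$ are non-constant polynomial maps from $\CC$, hence unbounded, and then \emph{chooses} a sequence $\{t_n\}\subset\CC$ whose images under both are closed and discrete; the tame set is $\{\,i\bigl(\begin{smallmatrix}1&t_n\\0&1\end{smallmatrix}\bigr)\,\}$. You instead exploit that the $R$-equivariant surjection $\CC[G]\twoheadrightarrow\CC[R]$ splits (reductivity of $R$), so the concrete invariants $c$ resp.\ $b$ (or their squares when $R=\mathrm{PSL}_2$) extend to $\mathcal V$- resp.\ $\mathcal W$-invariant regular functions on all of $G$; since these already separate the points $\Psi_1(u_+(k))$ resp.\ $\Psi_2\Psi_1(u_+(k))$ with values tending to infinity, no subsequence is needed and the tame set is simply $\{u_+(k):k\in\NN\}$. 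Your route is a bit more algebraic and yields an explicit tame set; the paper's route avoids the splitting argument at the price of a less explicit sequence. Both are perfectly valid.
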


\begin{proof}
Let $V$ and $W$ be the vector fields on $G$ with flows given by
\[
\begin{split}
\CC \times G& \to G \\
(t,x)& \to i \begin{pmatrix} 1 & t \\ 0 & 1 \end{pmatrix} \cdot x \cdot i \begin{pmatrix} 1 & -t \\ 0 & 1 \end{pmatrix}
\end{split}
\]
and
\[
\begin{split}
\CC \times G& \to G \\
(t,x)& \to i \begin{pmatrix} 1 & 0 \\ t & 1 \end{pmatrix} \cdot x \cdot i \begin{pmatrix} 1 & 0 \\ -t & 1 \end{pmatrix}
\end{split}
\]
respectively.

As $V$ and $W$ are complete algebraic vector fields with algebraic flows, we are interested in $Q_V$ and $Q_W$ to find functions in their respective kernels. 

We consider the following polynomial maps:
\begin{align*}
\beta_V \colon \CC \to G, & \quad
t \mapsto \phi^1_W \circ i \begin{pmatrix} 1 & t \\ 0 & 1 \end{pmatrix}
 = i \begin{pmatrix} 1-t & t \\ -t & 1+t \end{pmatrix} \\
\beta_W \colon \CC \to G, & \quad
t \mapsto i \begin{pmatrix} 1 & t \\ 0 & 1 \end{pmatrix} \\
\alpha_V \colon \CC \to Q_V, & \quad \alpha_V = \pi_V \circ \beta_V \\
\alpha_W \colon \CC \to Q_W, & \quad \alpha_W = \pi_W \circ \beta_W
\end{align*}
Since the $(\CC,+)$-actions of $V$ and $W$ arise from the action of the reductive group $\slgrp_2(\CC)$ on $G$ which is an affine variety, their ring of invariant functions is actually finitely generated, hence $Q_V$  and $Q_W$ are affine, see e.g.\ \cite{LND}*{Prop.~6.2}.

We wish to choose a sequence $\{ t_n\}_{n \in \NN} \subset \CC$ such that its image under both $\alpha_V$ and $\alpha_W$ is closed, discrete and unbounded. This is possible if $\alpha_V(\CC)$ and $\alpha_W(\CC)$ are unbounded as maps to complex-affine space.
Let us consider the differentials $\gamma_V$ and $\gamma_W$ of $\beta_V$ and $\beta_W$ at $0 \in \CC$ to obtain maps into the Lie algebra $i_\ast(\slalg_2(\CC)) \subset \mathfrak{g}$. The differentials $d\pi_V$ and $d\pi_W$ at $\id \in G$ are maps from $\mathfrak{g}$ into the tangent spaces of $Q_V$ and $Q_W$ respectively. By the third isomorphism theorem $T_{\pi_V(\id)}Q_V$ is isomorphic to the Lie algebra $\mathfrak{g}$ modulo the kernel of the projection $d_{\id} \pi_V$.
Restricting our attention to $i_\ast(\slalg_2(\CC)) \subset \mathfrak{g}$, we have that $K_V := i_\ast \slalg_2(\CC) \cap \ker d_{\id} \pi_V = \{ i_\ast \left( \begin{pmatrix} 0 & b \\ 0 & 0 \end{pmatrix}\right) : b \in \CC \}$. For $W$ we have that $K_W := i_\ast \slalg_2(\CC) \cap \ker d_{\id} \pi_W = \{ i_\ast \left( \begin{pmatrix} 0 & 0 \\ c & 0 \end{pmatrix}\right) : c \in \CC \}$. The differentials
\[
\begin{diagram}
\node{\CC} \arrow{e,t}{\gamma_V} \node{i_\ast {\slalg_2(\CC)}} \arrow{e,t}{d\pi_V} \node{{i_\ast {\slalg_2(\CC)} / K_V} \subset T_{\pi_V(\id)}Q_V} \\
\node{t} \arrow{e,t,T}{} \node{ i_\ast \left( t \cdot \begin{pmatrix} -1 & 1 \\ -1 & 1 \end{pmatrix}\right)} \arrow{e,t,T}{} \node{ \left[ t \cdot \begin{pmatrix} -1 & 1 \\ -1 & 1 \end{pmatrix}\right]} \\
\end{diagram}
\]
and
\[
\begin{diagram}
\node{\CC} \arrow{e,t}{\gamma_W} \node{i_\ast {\slalg_2(\CC)}} \arrow{e,t}{d\pi_W} \node{{{i_\ast \slalg_2(\CC)} / K_W} \subset T_{\pi_W(\id)}Q_W} \\
\node{t} \arrow{e,t,T}{} \node{ i_\ast \left( t \cdot \begin{pmatrix} 0 & 1 \\ 0 & 0 \end{pmatrix}\right)} \arrow{e,t,T}{} \node{ \left[ t \cdot \begin{pmatrix} 0 & 1 \\ 0 & 0 \end{pmatrix} \right]} 
\end{diagram}
\]
are not constant, hence $\alpha_V$ and $\alpha_W$ are not constant.

We can now make our choice of  $\{ t_n\}_{n \in \NN} \subset \CC$ such that its image under both $\alpha_V$ and $\alpha_W$ is closed, discrete and unbounded. We claim that $\left \{ i \begin{pmatrix} 1 & t_n \\ 0 & 1 \end{pmatrix} \right \}_{n \in \NN} \subset G$ is a tame sequence.

Let $\ell \colon \NN \to \NN$ denote the injective self-map we need to interpolate. As $\{\alpha_V(t_n)\}_{n\in \NN} \subset Q_V$ is closed and discrete, there exists $f \in \ker V$ such that $f\left(\varphi_W^1\left( i \begin{pmatrix} 1 & t_n \\ 0 & 1 \end{pmatrix}\right) \right)=\sqrt{\frac{t_{\ell(n)}}{t_n}}-1$. For the same reason there exists $g \in \ker W$ such that $g\left( i \begin{pmatrix} 1 & t_{\ell(n)} \\ 0 & 1 \end{pmatrix}\right) = \sqrt{\frac{t_n}{t_{\ell(n)}}}$.

Let $F=\varphi_V^f \in \aut(G)$ and $H=\varphi_W^g \in \aut (G)$, then the same computation from Lemma \ref{seq1} shows that $H^{-1} \circ F \circ \varphi_W^1$ is the interpolating automorphism.
\end{proof}

\begin{cor}
A complex semi-simple Lie group contains always a tame set.
\end{cor}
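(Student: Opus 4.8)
The plan is to reduce the statement to Proposition~\ref{prop-simplegroup}. Recall first that a connected complex semi-simple Lie group $G$ is a linear algebraic group (its radical is trivial and its centre is finite, so it admits a faithful algebraic representation), so the hypotheses of that proposition make sense for $G$. It therefore suffices to exhibit an immersive algebraic Lie group homomorphism $i \colon \slgrp_2(\CC) \to G$.

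To build $i$ I would first produce an $\slalg_2$-triple inside the Lie algebra $\mathfrak{g}$ of $G$. Fix a Cartan subalgebra $\mathfrak{h} \subseteq \mathfrak{g}$ and a root $\alpha$ of the associated root system. Choosing a nonzero $e_\alpha \in \mathfrak{g}_\alpha$, standard structure theory provides a unique $f_\alpha \in \mathfrak{g}_{-\alpha}$ such that, setting $h_\alpha := [e_\alpha, f_\alpha]$, one has $[h_\alpha, e_\alpha] = 2 e_\alpha$ and $[h_\alpha, f_\alpha] = -2 f_\alpha$. Then $\langle e_\alpha, h_\alpha, f_\alpha \rangle$ is a subalgebra of $\mathfrak{g}$ isomorphic to $\slalg_2(\CC)$, and this inclusion is an injective homomorphism of Lie algebras $\iota \colon \slalg_2(\CC) \hookrightarrow \mathfrak{g}$. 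Since $\slgrp_2(\CC)$ is simply connected, $\iota$ integrates to a holomorphic group homomorphism $i \colon \slgrp_2(\CC) \to G$ with $d_{\id} i = \iota$; in particular $i$ is immersive because $\iota$ is injective. (Note that because the source is simply connected this works for every isogeny form of $G$, so no case distinction is needed.)

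It remains to check that $i$ is algebraic. I would argue that $i$ sends the two standard unipotent one-parameter subgroups of $\slgrp_2(\CC)$ to $t \mapsto \exp(t\, e_\alpha)$ and $t \mapsto \exp(t\, f_\alpha)$ in $G$; since $e_\alpha$ and $f_\alpha$ act nilpotently, in a faithful algebraic representation of $G$ these exponentials are polynomial in $t$, so they are unipotent one-parameter algebraic subgroups of $G$. As $\slgrp_2(\CC)$ is generated by these two subgroups and $i$ is algebraic on each of them, $i$ is a morphism of algebraic groups. (Alternatively one may invoke the rigidity of semi-simple groups: any holomorphic homomorphism from a connected semi-simple complex Lie group to a linear algebraic group is automatically algebraic.) With $i$ in hand, Proposition~\ref{prop-simplegroup} applies and yields an $\aut(G)$-tame set in $G$.

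The only genuinely delicate point is the verification of algebraicity of $i$; the construction of the root $\slalg_2$-triple and its integration to $\slgrp_2(\CC)$ are entirely standard, and everything else is a direct appeal to Proposition~\ref{prop-simplegroup}.
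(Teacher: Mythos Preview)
Your proof is correct and follows essentially the same strategy as the paper: produce an $\slalg_2$-triple in $\mathfrak{g}$, integrate it to an immersive homomorphism $i \colon \slgrp_2(\CC) \to G$, and invoke Proposition~\ref{prop-simplegroup}. The only differences are cosmetic --- the paper obtains the triple via the Jacobson--Morozov theorem rather than your root-space construction, and it does not spell out the algebraicity of $i$ as carefully as you do.
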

\begin{proof}
As $G$ is semi-simple, there exists an immersion of $\slgrp_2(\CC)$.
This follows from the proof of the classification of simple Lie algebras. For lack of a reference mentioning this fact, we refer to the much stronger Jacobson--Morozov Theorem \cite{basicLie}*{Theorem 3.7.2}: If $\mathfrak{g}$ is a completely reducible Lie algebra of linear transformations, then for any nilpotent element $e \in \mathfrak{g}$ there exists another nilpotent $f \in \mathfrak{g}$ and $h = [f,g]$ such that $e,f,h$ span $\slalg_2(\CC)$ as an embedded subalgebra of $\mathfrak{g}$.
Let $i$ be the induced immersion $\slgrp_2(\CC) \hookrightarrow G$.
\end{proof}

\begin{theorem}
\label{thm-lingrp}
Every connected complex-linear algebraic group $X$ different from the complex line $\CC$ or the punctured complex line $\CC^\ast$ contains an $\aut(X)$-tame set.
\end{theorem}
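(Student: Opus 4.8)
The plan is to reduce to the two situations already handled in the paper — products of copies of $\CC$ and $\CC^\ast$ (the corollary to Lemma~\ref{lem-product}) and groups carrying an immersion of $\slgrp_2(\CC)$ (Proposition~\ref{prop-simplegroup}) — by feeding in the structure theory of linear algebraic groups. First I would reduce the dimension: a zero-dimensional (in particular finite) group has no infinite closed discrete subset, so we may take $\dim X \geq 1$; and the only connected linear algebraic groups over $\CC$ of dimension $1$ are the additive group $\CC$ and the multiplicative group $\CC^\ast$, both excluded by hypothesis. Hence $\dim X \geq 2$ from now on.

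Next I would split into cases according to whether $X$ is solvable. If $X$ is solvable, the structure theory of connected solvable linear algebraic groups provides a maximal torus $T$ with $X = T \ltimes R_u(X)$, and the multiplication map $T \times R_u(X) \to X$ is an isomorphism of varieties. Since $T \cong (\CC^\ast)^n$ and, in characteristic zero, the connected unipotent group $R_u(X)$ is isomorphic as a variety to $\CC^m$, the manifold $X$ is biholomorphic to $\CC^m \times (\CC^\ast)^n$ with $m+n = \dim X \geq 2$. The corollary to Lemma~\ref{lem-product} furnishes an $\aut(\CC^m \times (\CC^\ast)^n)$-tame set, and transporting it through the biholomorphism gives an $\aut(X)$-tame set: tameness in the sense of Definition~\ref{def-tame} is patently a biholomorphic invariant, since a biholomorphism conjugates interpolating automorphisms on one side to interpolating automorphisms on the other.

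If $X$ is not solvable, then neither is its Lie algebra, so a Levi--Malcev decomposition exhibits a nonzero semisimple subalgebra; equivalently, a Levi subgroup $L \subseteq X$ is reductive and non-abelian, so $[L,L]$ is a nontrivial semisimple algebraic subgroup of $X$. Proceeding as in the proof of the corollary to Proposition~\ref{prop-simplegroup} — Jacobson--Morozov inside $[L,L]$ to produce a subalgebra isomorphic to $\slalg_2(\CC)$, then using that $\slgrp_2(\CC)$ is simply connected so that this inclusion of Lie algebras integrates to an algebraic homomorphism — one obtains an immersive algebraic homomorphism $i \colon \slgrp_2(\CC) \to [L,L] \hookrightarrow X$. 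Proposition~\ref{prop-simplegroup} then yields an $\aut(X)$-tame set.

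The dimension reduction and the biholomorphic invariance of tameness are routine, and no genuinely new analytic input is needed beyond Lemma~\ref{lem-product}, Proposition~\ref{prop-simplegroup} and their corollaries. The real substance is the structure theory: the variety-level splitting $X \cong T \times R_u(X)$ in the solvable case, and in the non-solvable case the existence of an \emph{algebraic}, immersive homomorphism $\slgrp_2(\CC) \to X$ — I expect this last point (algebraicity and immersivity of $i$) to be the only one requiring some care, and it is already essentially carried out in the earlier corollary.
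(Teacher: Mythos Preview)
Your argument is correct and reaches the same conclusion with the same two tools (Lemma~\ref{lem-product} and Proposition~\ref{prop-simplegroup}), but organises the case analysis differently from the paper. The paper invokes Mostow's decomposition $X \cong N \rtimes M$ with $N$ unipotent and $M$ reductive, handles the cases where one factor is trivial or both are non-trivial via Lemma~\ref{lem-product}, and then, in the purely reductive case, performs an additional reduction through the identity component of the center, the universal cover, and simple factors before arriving at an immersive $\slgrp_2(\CC)$-map. Your solvable/non-solvable dichotomy is more economical: in the solvable case the splitting $X \cong T \times R_u(X) \cong (\CC^\ast)^n \times \CC^m$ puts you directly in the situation of the corollary to Lemma~\ref{lem-product}, and in the non-solvable case you observe that Proposition~\ref{prop-simplegroup} requires only an immersive algebraic homomorphism from $\slgrp_2(\CC)$ into $X$ --- not that $X$ itself be simple or semisimple --- so Jacobson--Morozov inside the semisimple part $[L,L]$ of a Levi factor, integrated to the simply connected $\slgrp_2(\CC)$, suffices without any passage to covers or quotients. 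What the paper's longer route buys is perhaps a slightly more explicit description of the vector fields in the reductive case; what your route buys is a cleaner logical structure that avoids the discussion of central quotients entirely. The one point worth stating carefully, as you note, is that the resulting homomorphism $\slgrp_2(\CC)\to X$ is algebraic (the nilpotent generators exponentiate to algebraic one-parameter unipotent subgroups) and immersive (its differential at the identity is the inclusion of the $\slalg_2$-triple); both are standard.
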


\begin{proof}
By Mostow's Theorem \cite{Mostow} the group $X$ is isomorphic to a semi-direct product $N \rtimes M$ where $N$ is the connected normal subgroup consisting of the unipotent elements of $X$ and $M$ is a maximal fully reductive subgroup of $X$. Hence, $X$ is isomorphic as affine variety to the direct product $N \times M$.
If both $M$ and $N$ are non-trivial complex Lie groups, we can apply Lemma \ref{lem-product} directly.
Since $N$ is a unipotent group, it is also nilpotent and hence as affine variety isomorphic to $\CC^n$. If $M$ is trivial, then by assumption we have $n \geq 2$ and we can again apply Lemma \ref{lem-product}.

It remains to consider the case where $N$ is trivial and $M$ is non-trivial. We will reduce this case to the situation where $M$ is a simple complex Lie group. 
We follow the same strategy as for the proof of the algebraic density property for complex-linear groups in \cite{KK-Criteria}*{Theorem 3}. Assume first that $M$ is indeed simple. Then we could apply Proposition \ref{prop-simplegroup} to $M$ and obtain two complete algebraic vector fields $V$ resp.\ $W$ with algebraic flows. 

Let $Z \cong (\CC^\ast)^n$ denote the identity component of the center of $M$ and assume it is non-trivial. If $M$ decomposes as a product $M^\prime \times Z$ we may again apply Lemma \ref{lem-product}. In the general case, $M \cong (M^\prime \times Z)/\Gamma$ for a central normal subgroup $\Gamma$. The flows of the complete vector fields will commute with $\Gamma$ and induce complete vector fields on $M$ with the same properties.

Next we may assume that the identity component of the center $Z$ is trivial. If $M$ is not simply connected, we pass to its universal cover $\widetilde M$. If a semi-simple Lie group $M$ is simply connected, then it decomposes as a product of simple Lie groups and we are done by applying Proposition \ref{prop-simplegroup} to one factor or just Lemma \ref{lem-product} in case there are at least two factors.

If $\widetilde M \to M$ is the universal cover, then $M \cong \widetilde M / H$ for a discrete (actually finite) subgroup $H$ of its center, hence as above, this induces  complete vector fields on $M$ with the same properties except that the inclusion $i \colon \slgrp_2(\CC) \to \widetilde M$ might now just be an immersion in $M$.	
\end{proof}

\begin{theorem} \label{thm-commuting}
Let $X$ be an affine algebraic complex manifold and let $V,W$ be complete algebraic vector fields whose flows are algebraic. If $[V,W]=0$ and their kernels are not contained one into the other, then there exists a tame sequence in $X$.

Moreover if $X$ has a holomorphic volume form and $V,W$ are volume preserving, then there exists a very tame sequence in $X$.
\end{theorem}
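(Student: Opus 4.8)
The plan is to mimic the three-shear interpolation scheme that already appeared in Lemma~\ref{seq1}, Lemma~\ref{lem-product}, and Proposition~\ref{prop-simplegroup}, but carried out intrinsically on $X$ using only the invariant-theory quotients $\pi_V\colon X\to Q_V$ and $\pi_W\colon X\to Q_W$ supplied by Theorem~\ref{Wink}. First I would fix a point $x_0\in X$ at which neither $V$ nor $W$ vanishes, and moreover at which the orbit data are "generic" in the following sense. Consider the curve $\beta\colon\CC\to X$, $t\mapsto \varphi_W^1(\varphi_V^t(x_0))$. Composing with $\pi_V$ and with $\pi_W$ gives $\alpha_V=\pi_V\circ\beta$ and $\alpha_W=\pi_W\circ\varphi_V^{(\cdot)}(x_0)$ (or the appropriate analogue); the key point, exactly as in Proposition~\ref{prop-simplegroup}, is that $d\alpha_V$ and $d\alpha_W$ do not vanish identically. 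Since $[V,W]=0$, the hypothesis that $\ker V$ and $\ker W$ are not contained one in the other should be exactly what guarantees that, at a suitable $x_0$, the $V$-orbit of $x_0$ is not contained in a single $W$-invariant fibre and vice versa, so that both $\alpha_V$ and $\alpha_W$ are non-constant; then one can pick an unbounded closed discrete sequence $\{t_n\}\subset\CC$ whose images under $\alpha_V$ and under $\alpha_W$ are simultaneously closed, discrete and unbounded, and set $A:=\{\varphi_V^{t_n}(x_0)\}_{n\in\NN}$.

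Next, given an injective self-map $\ell\colon\NN\to\NN$, I would build the interpolating automorphism as a composition of three shears $\varphi_W^{g}\circ\varphi_V^{f}\circ\varphi_W^{1}$ (or, in the commuting case, possibly just $\varphi_V^{f'}\circ\varphi_W^{g'}$ since the two flows commute — one should check which is cleaner). Because $\{\alpha_V(t_n)\}$ is closed and discrete in the affine variety $Q_V$, there is an algebraic (or at least holomorphic) function on $Q_V$, pulled back via $\pi_V$ to an element $f\in\ker V$, prescribing the needed values at the points $\varphi_W^1(\varphi_V^{t_n}(x_0))$; and similarly a function $g\in\ker W$ prescribing the correction values at the image points. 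The precise values to prescribe are dictated by tracing $\varphi_V^{t_n}(x_0)$ through the composition and matching with $\varphi_V^{t_{\ell(n)}}(x_0)$, exactly the bookkeeping done in Lemma~\ref{seq1} and Proposition~\ref{prop-simplegroup}; since $[V,W]=0$ the conjugation identities are replaced by honest commutation, which should only simplify the algebra. Closedness and discreteness of $A$ itself follows because $\alpha_V$ is a nonconstant algebraic curve and $\{t_n\}$ was chosen with closed discrete image.

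For the volume-preserving addendum: if $X$ carries a holomorphic volume form $\omega$ and $V,W$ are $\omega$-divergence-free, then each shear $\varphi_V^{f}$ with $f\in\ker V$ is still volume preserving, since $\operatorname{div}_\omega(fV)=f\operatorname{div}_\omega(V)+V(f)=0$; the same for $\varphi_W^{g}$ with $g\in\ker W$. Hence the interpolating automorphism lies in $\aut_\omega(X)$, and the constructed set is very tame. The main obstacle I anticipate is the very first step: showing that the hypothesis "$\ker V$ and $\ker W$ are not contained one into the other" really does produce a point $x_0$ at which \emph{both} $\alpha_V$ and $\alpha_W$ are non-constant and along which one can choose a \emph{single} sequence $\{t_n\}$ that is good for both quotients simultaneously. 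One direction is easy — if, say, $\ker W\not\subseteq\ker V$, pick $w\in\ker W$ with $Vw\not\equiv 0$, which forces $\alpha_V$ (built from the $V$-flow) to be non-constant — but getting non-constancy of \emph{both} maps from the symmetric hypothesis, and then arranging the common unbounded discrete sequence via a Baire-category or countable-intersection argument in $\CC$, is where the real care is needed; the commuting hypothesis $[V,W]=0$ should be what reconciles the two conditions, because it makes $\ker V$ invariant under the $W$-flow and conversely, so the two "bad loci" are each other's flow-saturations and cannot both be everything unless one kernel contains the other.
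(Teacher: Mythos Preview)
Your overall plan --- place a tame set along one orbit and interpolate by a three-shear composition using functions pulled back from the invariant quotients $Q_V$, $Q_W$ --- is the paper's, and your treatment of the volume-preserving addendum is correct. But the specific scheme $\varphi_W^{g}\circ\varphi_V^{f}\circ\varphi_W^{1}$, lifted from Lemma~\ref{seq1} and Proposition~\ref{prop-simplegroup}, collapses precisely \emph{because} $[V,W]=0$: the constant-time shear $\varphi_W^1$ sends the $V$-orbit $\{\varphi_V^{t_n}(x_0)\}$ onto the single $V$-orbit through $\varphi_W^1(x_0)$, so any $f\in\ker V$ is constant there and the middle shear $\varphi_V^{f}$ cannot distinguish the points. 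Equivalently, your map $\alpha_V=\pi_V\circ\beta$ is identically constant, since $\pi_V\bigl(\varphi_W^1(\varphi_V^t(x_0))\bigr)=\pi_V\bigl(\varphi_V^t(\varphi_W^1(x_0))\bigr)=\pi_V(\varphi_W^1(x_0))$. In the non-commuting situation of Proposition~\ref{prop-simplegroup} the conjugation $\varphi_W^1$ genuinely twisted the curve off a single $V$-fibre; that twisting is exactly what commutativity kills. Your two-shear alternative $\varphi_V^{f'}\circ\varphi_W^{g'}$ also fails: after $\varphi_W^{g'}$ the points lie on \emph{distinct} $V$-orbits and a single $V$-shear cannot return them all to the original one.

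The fix, carried out in the paper, is to let \emph{all three} shears carry varying coefficients. With the tame set $x_n=\varphi_W^{t_n}(\hat x)$ on a $W$-orbit, one takes $F=(F_3)^{-1}\circ F_2\circ F_1$ where $F_1=\varphi_V^{f_1}$, $F_3=\varphi_V^{f_3}$ with $f_1,f_3\in\ker V$, and $F_2=\varphi_W^{f_2}$ with $f_2\in\ker W$. The first $V$-shear pushes the $x_n$ by freely chosen amounts $a_n$ onto \emph{distinct} $W$-fibres (possible because $t\mapsto\pi_V(\varphi_W^t(\hat x))$ is non-constant --- this is where the kernel hypothesis enters, via Rosenlicht and a short contradiction argument), the $W$-shear then realises the permutation along the $W$-direction, and the last $V$-shear undoes the first. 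Two further technical points you glossed over matter here: first, $Q_V$ is only \emph{quasi}-affine in general (not affine as you wrote), so one writes $Q_V=\Omega\setminus A$ with $\Omega$ affine and passes to a subsequence whose $\pi_V$-image is discrete in $\Omega$ before interpolating; second, the intermediate amounts $a_n$ are not dictated but must themselves be chosen so that $\{\pi_W(\varphi_V^{a_n}(x_n))\}$ is discrete in $Q_W$, which is possible since $t\mapsto\pi_W(\varphi_V^t(\hat x))$ is non-constant.
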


\begin{proof}
We will first provide a tame sequence, then explain how to obtain a very tame one under the additional hypothesis.

As the kernels are not contained one into the other, there exist $g,h \in \CC[X]$ such that
\[
\begin{split}
V(g)=W(h)=0 \\
V(h),W(g)\neq 0.
\end{split}
\]
Denote by $\pi_V$ and $\pi_W$ the rational maps given by Theorem \ref{Wink}.
Consider the following properties for a point $x \in X$:
\begin{enumerate}[(i)]
\item $V(h)(x)\neq 0$;
\item $W(x) \neq 0$;
\item $\pi_V^{-1}(\pi_V(x))=\{\phi_V^t(x):t\in \CC\}$;
\item $x$ is not in the singular set of $\pi_V$
\end{enumerate}
and the analogous ones
\begin{enumerate}[(a)]
\item $W(g)(x)\neq 0$;
\item $V(x) \neq 0$;
\item $\pi_W^{-1}(\pi_W(x))=\{\phi_W^t(x):t\in \CC\}$;
\item $x$ is not in the singular set of $\pi_W$
\end{enumerate}
Conditions (i),(ii),(iv),(a),(b) and (d) are clearly generically satisfied. This is true also for conditions (iii) and (c) by a theorem of Rosenlicht \cite{Rosenlicht}.

We now prove that if $\hat{x}\in X$ satisfies (i), (ii), (iii) and (iv) then the rational map
\[
\begin{split}
\CC &\to Q_V \\
t &\mapsto \pi_V(\phi_W^t(\hat{x})) 
\end{split}
\]
is not constant.
By (i), the map
\[
s \mapsto h(\phi_V^s(\hat{x}))
\]
is not constant.
Assume that $\pi_V(\phi_W^t(\hat{x})) =\pi_V(\hat{x})$ for all $t \in \CC$.
Then by (iii) we have that $\phi_W^t(\hat{x}) \in \{\phi_V^s(x):s\in \CC\}$ for all $t \in \CC$.
Since $W(\hat{x}) \neq 0$ by condition (ii), the flow of $W$ starting at $\hat{x}$ is not constant, hence there must be $\hat{t} \neq 0$ and $\hat{s} \neq 0$ such that
$\phi_W^{\hat{t}}(\hat{x})=\phi_V^{\hat{s}}(\hat{x})$ and $h(\phi_V^{\hat{s}}(\hat{x}))\neq h(\hat{x})$.
We get a contradiction because $h(\phi_W^{\hat{t}}(\hat{x}))= h(\hat{x})$, since $h \in \ker W$

If $\hat{x}$ also satisfies (a), (b), (c) and (d), we obtain that the rational images
$\{\pi_V(\phi_W^t(\hat{x})) \,:\, t \in \CC\}\subset Q_V$ and $\{\pi_W(\phi_V^t(\hat{x})) \,:\, t \in \CC\} \subset Q_W$ are unbounded.

We claim that conditions (a),(b), (c) and (d) are generically true (with respect to $t \in \CC$) for points in $\{\phi_W^t(\hat{x}) :t \in \CC\}\subset X$.
This is true for condition (a) and (b) as
\[
\begin{split}
\CC &\mapsto \CC \\
t &\mapsto W(g)(\phi_W^t(\hat{x})) 
\end{split}
\]
and 
\[
\begin{split}
\CC &\to TX \\
t &\mapsto V(\phi_W^t(\hat{x}))
\end{split}
\]
are not constant, since $\hat{x}$ satisfies (a) and (b).
Conditions (c) and (d) are always true because $\{\phi_W^s (\phi_W^t(\hat{x})):s\in \CC\}=\{\phi_W^s(\hat{x}):s\in \CC\}=\pi_W^{-1}(\pi_W(\hat{x}))$, where the last equality is condition (c) for $\hat{x}$.

We obtain our tame sequence $\{x_n\}_{n \in \NN} \subset X$ by setting $x_n=\phi_W^{t_n}(\hat{x})$, for a sequence $\{t_n\} \subset \CC$ such that
$\{ \pi_V(x_n) \} \subset Q_V$ is discrete and each $x_n$ satisfies (a), (b), (c) and (d). It exists because $\{\pi_V(\phi_W^t(\hat{x})) :t \in \CC\}\subset Q_V$ is unbounded and (a), (b), (c) and (d) are generic with respect to $t$.
It remains to prove that such a sequence is tame.

An injective map $f \colon \{x_n\} \to \{x_n\}$ is nothing but a relabeling (possibly with omissions) of the points, in the the sense that for every $n\in \NN$ there is $k_n \in \NN$ such that $f(x_n)=x_{k_n}$ and $k_n \neq k_m$ for $n \neq m$.

The quasi-affine variety $Q_V$ is equal to $\Omega \setminus A$, where $\Omega$ is an affine variety and $A$ is an algebraic subset. Since $\{ \pi_V(x_n) \} \subset Q_V \subset \Omega$ is unbounded and discrete in $Q_V$ we may assume that, after passing to a subsequence, it is also discrete in $\Omega$. Since $\Omega$ is affine, there exist $\tilde{f}_1,\tilde{f}_3 \in \Olo(\Omega) \subset \Olo(Q_V)$ such that
\[
\begin{split}
\tilde{f}_1(\pi_V(x_n)) &= a_n; \\
\tilde{f}_3(\pi_V(x_{k_n})) &= a_n
\end{split}
\]
for any sequence $\{a_n\}\subset \CC$. We postpone the choice of this sequence.

We claim that $f_1:=\pi_V^* \tilde{f}_1, \  f_3:=\pi_V^* \tilde{f}_3 \in \Olo(X)$ are well-defined and in the kernel of $V$. Since $\Omega$ is affine there exist sequences of regular maps on $Q_V$ converging to $\tilde{f}_1$ and $\tilde{f}_3$ respectively. We define the pullbacks of $\tilde{f}_1$ and $\tilde{f}_3$ as the limit of the pullbacks of the respective sequences. They are in the kernel of $V$ because they are obtained as limits of invariant maps.

This gives complete vector fields $f_1V, f_3V$ which flows are determined by
\[
\begin{split}
\CC \times X& \to X \\
(t,x)& \mapsto \phi_V^{f_i(x)t} (x)
\end{split}
\]
for $i=1,3$. The time-$1$ maps are automorphisms $F_1, F_3$ of $X$ such that
\[
\begin{split}
F_1(x_n) &=\phi_V^{a_n}(x_n)=\phi_V^{a_n}(\phi_W^{t_n}(\hat{x})); \\
F_3(x_{k_n})& =\phi_V^{a_n}(x_{k_n})=\phi_V^{a_n}(\phi_W^{t_{k_n}}(\hat{x})).
\end{split}
\]
As $[V,W]=0$ the respective flows commute, hence we are looking for $f_2 \in \ker W$ such that $f_2(\phi_V^{a_n}(x_n))=t_{k_n}-t_n$. We could then set $F_2$ to be the time-1 flow of $f_2W$ and obtain the interpolating automorphism $F:=(F_3)^{-1} \circ F_2 \circ F_1$. To obtain $f_2$ we turn our attention to $Q_W$, in particular to the sequence $\{ \pi_W (\phi_V^{a_n}(x_n)) \} \subset Q_W$. If it is discrete and without repetition then we can find $\tilde{f}_2 \in \Olo(Q_W)$ such that $f_2:=\pi_W^* \tilde{f}_2$ has the required property. Since we chose $x_n$ such that (a), (b), (c) and (d) hold, the rational map
\[
\begin{split}
\CC &\to Q_W \\
t & \mapsto \pi_W(\phi_V^t(x_n) )=\pi_W(\phi_V^t(\phi_W^{t_n}(\hat{x}))=\pi_W(\phi_V^t(\hat{x}))
\end{split}
\]
is not constant, hence unbounded. We conclude the proof by choosing the sequence $\{a_n\}$ in such a way that $\{ \pi_W (\phi_V^{a_n}(x_n)) \} \subset Q_W$ is discrete and without repetition.

For the volume preserving case, recall that if we have a holomorphic volume form on $X$, a complete vector field $V$ preserving such a form and $f \in \Olo(X)$ such that $V(f)=0$, then $fV$ is also complete and volume preserving. Hence the automorphisms $F_1,F_2$ and $F_3$ are volume preserving if so are $V$ and $W$.
\end{proof}

Observe that even if we start from algebraic vector fields with algebraic flows, it is not true in general that the interpolating automorphism will be algebraic.

\begin{cor}
Let $G$ and $H$ be non-trivial affine complex Lie groups whose connected components are not biholomorphic to $(\CC^\ast)^n$. Then $G \times H$ contains a $\aut(G \times H)$-tame sequence.
\end{cor}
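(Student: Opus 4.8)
The plan is to realise $G\times H$ as a product $X\times Y$ of Stein manifolds to which Lemma~\ref{lem-product} applies, i.e.\ to produce a complete holomorphic vector field with a non-periodic unbounded orbit on each factor. For $G$ it suffices to find $\xi$ in the Lie algebra $\mathfrak{g}$ of $G$ such that the one-parameter subgroup $t\mapsto\exp_G(t\xi)$ is injective with unbounded image: the left-invariant field $\xi^{L}$ is then complete on all of $G$ (disconnectedness of $G$ causes no trouble, as $\xi^{L}$ is defined everywhere), and its orbit through the identity is $\{\exp_G(t\xi):t\in\CC\}$. Carrying out the same construction on $H$ and invoking Lemma~\ref{lem-product} with $X=G$, $Y=H$ then yields an $\aut(G\times H)$-tame set.

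To find $\xi$ I would split into two cases. If $\mathfrak{g}$ is non-abelian, I would first check that it contains a nonzero $\xi$ with $\operatorname{ad}_\xi$ nilpotent and $\operatorname{ad}_\xi\neq 0$: if a Levi factor $\mathfrak{s}$ of $\mathfrak{g}$ is non-trivial, a nonzero nilpotent element of $\mathfrak{s}$ acts nilpotently on $\mathfrak{g}$ via $\operatorname{ad}$ and is non-central (since $Z(\mathfrak{s})=0$); and if $\mathfrak{g}$ is solvable, Lie's theorem makes every element of $[\mathfrak{g},\mathfrak{g}]$ $\operatorname{ad}$-nilpotent, while $[\mathfrak{g},\mathfrak{g}]\not\subseteq Z(\mathfrak{g})$ unless $\mathfrak{g}$ is nilpotent, in which case any non-central element works. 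For such a $\xi$ one has $\operatorname{Ad}(\exp_G(t\xi))=\exp(t\,\operatorname{ad}_\xi)$, a non-constant polynomial in $t$. Hence $t\mapsto\exp_G(t\xi)$ is injective (if $\exp_G(t_1\xi)=\exp_G(t_2\xi)$ then $\exp((t_1-t_2)\operatorname{ad}_\xi)=\id$, which forces $(t_1-t_2)\operatorname{ad}_\xi=0$), and its image is unbounded in $G$ (since $\operatorname{Ad}$ carries relatively compact sets to relatively compact ones, whereas $\|\operatorname{Ad}(\exp_G(t\xi))\|\to\infty$).

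If $\mathfrak{g}$ is abelian, the identity component $G_0$ is a connected abelian Stein complex Lie group, hence isomorphic, as a complex Lie group, to $\CC^{a}\times(\CC^\ast)^{b}$ with $a+b=\dim G_0\ge 1$. Since $G_0$ is, by hypothesis, not biholomorphic to any $(\CC^\ast)^{n}$, we obtain $a\ge 1$, so there is $\xi\in\mathfrak{g}$ with $\exp_G(t\xi)=(tv,1,\dots,1)$ for some $v\neq 0$; this orbit is a properly embedded copy of $\CC$, hence non-periodic and unbounded. Taking this $\xi$, together with the analogous element of $\mathfrak{h}$, completes the argument.

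I expect the main obstacle to be the non-abelian case: establishing that a non-abelian complex Lie algebra always possesses a non-central $\operatorname{ad}$-nilpotent element, and then using the adjoint representation to show that the associated one-parameter subgroup is a genuine proper embedding of $\CC$, not a periodic or relatively compact curve. The hypothesis excluding $(\CC^\ast)^{n}$ is exactly what makes the abelian case go through, since every complete holomorphic vector field on $(\CC^\ast)^{n}$ is a linear combination of the fields $z_j\,\partial/\partial z_j$, all of whose non-constant orbits are periodic.
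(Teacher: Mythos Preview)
Your argument is correct. The paper states this corollary without proof, but its placement immediately after Theorem~\ref{thm-commuting} (and alongside Corollary~\ref{cor-KorasRussell}, which explicitly invokes that theorem) indicates the intended route is via Theorem~\ref{thm-commuting}: since the identity component $G_0$ is a connected linear algebraic group not isomorphic to a torus, it contains a one-parameter unipotent subgroup, giving a complete algebraic vector field $V$ on $G$ with algebraic flow; likewise $W$ on $H$; on $G\times H$ these commute and the kernel condition is immediate.

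You instead go through Lemma~\ref{lem-product}, which is a legitimate and arguably more elementary alternative, since that lemma needs only Steinness and has a much shorter proof. The Lie-theoretic work you carry out --- producing a non-central $\operatorname{ad}$-nilpotent $\xi$ and using the adjoint representation to show that $t\mapsto\exp_G(t\xi)$ is injective with unbounded (indeed properly embedded) image --- is the holomorphic analogue of locating a $\mathbb{G}_a$-subgroup in the algebraic picture. One small correction to your closing remark: it is not true that every complete holomorphic vector field on $(\CC^\ast)^n$ is a constant linear combination of the $z_j\,\partial/\partial z_j$ once $n\ge 2$ (for instance $z_1z_2\,\partial/\partial z_2$ is complete on $(\CC^\ast)^2$), nor that all non-constant orbits are periodic. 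The exclusion of every $(\CC^\ast)^n$ is really forced by Theorem~\ref{thm-commuting}'s requirement of \emph{algebraic} flows, which a torus lacks; your approach via Lemma~\ref{lem-product} in fact only needs to exclude $\CC^\ast$ itself.
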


\begin{cor}
\label{cor-KorasRussell}
Each variety of the family $X_{a,b}=\{x^2 y= a(z)+xb(z)\} \subset \CC^{n+3}$ for $a,b \in \Olo(\CC^{n+1})$ contains a very tame sequence with respect to the holomorphic volume form $\omega=\frac{dx}{x^2}\wedge dz_0 \wedge \dots \wedge dz_n$.
In particular, the \textit{Koras-Russell cubic threefold} $C=\{x^2y+x+z^2+w^3=0\} \subset \CC^4$ contains a very tame sequence with respect to the holomorphic volume form $\omega=\frac{dx}{x^2}\wedge dz \wedge dw$.
\end{cor}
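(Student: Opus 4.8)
The plan is to invoke Theorem~\ref{thm-commuting}, so the task is to exhibit on $X_{a,b}$ a pair of commuting, complete, $\omega$-volume-preserving vector fields whose kernels are not contained one in the other. For $i=0,\dots,n$ I would consider the vector field on $\CC^{n+3}$
\[
V_i \;=\; x^2\,\frac{\partial}{\partial z_i}\;+\;\bigl(\partial_{z_i}a(z)+x\,\partial_{z_i}b(z)\bigr)\,\frac{\partial}{\partial y}.
\]
Applying $V_i$ to the defining function $x^2y-a(z)-xb(z)$ gives $x^2(\partial_{z_i}a+x\,\partial_{z_i}b)-(\partial_{z_i}a+x\,\partial_{z_i}b)\,x^2=0$, so $V_i$ is tangent to $X_{a,b}$. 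On the Zariski-dense chart $\{x\ne0\}\cong\CC^\ast_x\times\CC^{n+1}_z$, where $y=(a(z)+xb(z))/x^2$ is the dependent coordinate, one checks that $V_i$ is simply $x^2\,\partial/\partial z_i$; hence its flow is $(x,z)\mapsto(x,\dots,z_i+x^2t,\dots)$, which extends to an everywhere-defined flow on $X_{a,b}$ (polynomial in $t$ when $a,b$ are polynomials), so each $V_i$ is complete, with algebraic flow in the Koras--Russell case. Finally, in this chart $\iota_{V_i}\omega=\pm\,dx\wedge dz_0\wedge\cdots\widehat{dz_i}\cdots\wedge dz_n$ is closed, so $\mathcal L_{V_i}\omega=0$ there and, by continuity, on all of $X_{a,b}$; thus each $V_i$ is $\omega$-volume-preserving.

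Assume first $n\ge1$, which in particular covers the Koras--Russell cubic threefold ($n=1$). I would take $V:=V_0$ and $W:=V_1$. In the chart $\{x\ne0\}$ these are $x^2\,\partial/\partial z_0$ and $x^2\,\partial/\partial z_1$, so $[V,W]=0$; and $z_1\in\ker V\setminus\ker W$ while $z_0\in\ker W\setminus\ker V$, so neither kernel contains the other. Theorem~\ref{thm-commuting} then yields a tame sequence, and since $V,W$ preserve $\omega$ its volume-preserving refinement upgrades this to a very tame sequence with respect to $\omega$. For the Koras--Russell cubic this is immediate: there $X_{a,b}\subset\CC^4$ is an affine algebraic manifold and $V_0=x^2\partial_z-2z\partial_y$, $V_1=x^2\partial_w-3w^2\partial_y$ have polynomial flows. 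For the general family with $a,b$ merely holomorphic, $X_{a,b}$ is Stein but not affine algebraic; here I would re-run the argument of Theorem~\ref{thm-commuting}, observing that the quotients $Q_{V_0},Q_{V_1}$ may be taken to be the explicit projections $\CC^\ast_x\times\CC^{n+1}_z\to\CC^\ast_x\times\CC^{n}_z$ forgetting the $z_0$- resp.\ $z_1$-coordinate, so that the appeals to the theorems of Rosenlicht and Winkelmann are replaced by the Mittag--Leffler interpolation available on the Stein manifold $X_{a,b}$, exactly as in the proof of Lemma~\ref{seq1} and of the Danielewski-type lemma preceding Corollary~\ref{cor-Gizatullin}.

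The remaining case is the surface $n=0$, and this is where the main difficulty is concentrated: there is no second commuting field of the above type. A complete vector field on $X_{a,b}$ moving the $x$-coordinate is still available — for instance the completion across $\{x=0\}$ of $a(z)\,x\,\partial/\partial x$, using that $a(z)=O(x)$ along $X_{a,b}$ near $x=0$ so that the $\partial/\partial y$-correction is holomorphic — and combining its flow $\psi^t(x,z)=(x\,e^{a(z)t},z)$ with the flow of $V_0=x^2\,\partial/\partial z$ one can build the interpolating automorphism as a threefold composition of shears, exactly mimicking that Danielewski-type lemma. The hardest point I expect is that this $x$-moving field is \emph{not} $\omega$-volume-preserving (its $\omega$-divergence equals $-a(z)$), so obtaining a \emph{very} tame — and not merely tame — sequence when $n=0$ will require either replacing it by a complete field whose $\omega$-divergence lies in the relevant kernel, or an explicit divergence-correcting device; the rest of the argument in that case is routine bookkeeping of Mittag--Leffler interpolations.
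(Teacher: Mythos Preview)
Your approach is essentially identical to the paper's: the authors simply write down the same vector fields $V=V_0$ and $W=V_1$ and assert that they satisfy the hypotheses of Theorem~\ref{thm-commuting}, without spelling out the verifications of tangency, completeness, commutativity, kernel non-containment, or $\omega$-preservation that you supply. You have in fact been more careful than the paper on two points: (i) the statement allows $a,b$ merely holomorphic, in which case $X_{a,b}$ is not affine algebraic and Theorem~\ref{thm-commuting} does not literally apply---the paper does not comment on this, while you correctly note that the explicit quotients replace the appeal to Rosenlicht and Winkelmann; (ii) the surface case $n=0$, where there is no $V_1$, is simply not covered by the paper's proof, and your concern about it is warranted rather than something you are missing.
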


This class of complex varieties was recently considered by Leuenberger \cite{Leuenberger}. He proved that under suitable conditions on the holomorphic functions $a$ and $b$, the space $X_{a,b}$ is a complex manifold with the density property and in some cases it also has the volume density property. Here we include the case where the variety is singular, as we will use vector fields that vanish on the singular locus.

The manifold $C$ is well-known to be an affine algebraic manifold which is diffeomorphic to $\RR^6$ but not algebraically equivalent to $\CC^3$ \cite{MakarLimanov}. The fact that it has the density and volume density property rises the question of whether $C$ is holomorphically equivalent to $\CC^3$. Related to the issue there is a conjecture stating that a manifold with the density property diffeomorphic to $\CC^n$ should be biholomorphic to $\CC^n$ \cite{TothVarolin2006}.

\begin{proof}
The vector fields
\[
V = \left( \frac{\partial a}{\partial z_0} + x \frac{\partial b}{\partial z_0} \right) \frac{\partial}{\partial y} + x^2 \frac{\partial}{\partial z_0}
\]
and
\[
W = \left( \frac{\partial a}{\partial z_1} + x \frac{\partial b}{\partial z_1} \right) \frac{\partial}{\partial y} + x^2 \frac{\partial}{\partial z_1}
\]
satisfy the hypothesis of Thm \ref{thm-commuting}.
\end{proof}

\section{Unavoidable sets}
\label{sec-unavoidable}

In analogy to quasi-affine varieties introduce the following notion:
\begin{definition}
A complex manifold $X$ is called \emph{quasi-Stein} if it biholomorphic to an open subset of a Stein manifold.
\end{definition}

\begin{theorem}
\label{thm-unavoid}
Let $V$ be an complex-affine algebraic manifold, $X$ a quasi-Stein manifold and assume that $\dim V = \dim X \geq 1$. Then there exists an infinite discrete closed subset $D \subset X$ such that $F(V) \cap D \neq \emptyset$ for every non-degenerate holomorphic map $F \colon V \to X$.
\end{theorem}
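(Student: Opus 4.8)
The plan is to realize $X$ inside Euclidean space, to construct $D$ together with a holomorphic object that detects it and whose growth is controlled by $L^{2}$-estimates, and then to play this controlled growth against the rigidity that the algebraicity of $V$ imposes on holomorphic objects living on $V$. For the set-up: since $X$ is quasi-Stein, fix an identification of $X$ with an open subset of a Stein manifold $Y$, and by Remmert's embedding theorem view $Y$ as a closed submanifold of some $\CC^{N}$, so that $X$ becomes a locally closed submanifold of $\CC^{N}$; realize $V$ as a closed \emph{algebraic} submanifold of some $\CC^{M}$, so that $\CC[V]$ consists of restrictions of polynomials and $V$ carries the plurisubharmonic exhaustion $\rho_{V}(w)=\log(1+\lVert w\rVert^{2})$ of logarithmic growth. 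Exhaust $X$ by the compacta $X_{k}=\{z\in X:\lVert z\rVert\le k,\ \mathrm{dist}(z,\partial X)\ge 1/k\}$ and set up on the Stein manifold $Y$ the H\"ormander $L^{2}$-apparatus with weights adapted to the $X_{k}$.

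\emph{The set $D$ and a defining map.} Choose $D=\{a_{j}\}_{j\in\NN}\subset X$ discrete and closed \emph{in $Y$} (not merely in $X$), distributing its points shell by shell through the $X_{k}$ so as to form a net that is arbitrarily fine in the ambient metric but globally discrete; the precise density and the companion radii $r_{j}$ of pairwise disjoint balls $\overline{B}(a_{j},r_{j})\subset X$ are fixed by the recursion below. Because $D$ is discrete and closed in the Stein manifold $Y$, solving the associated Cousin/Skoda problem on $Y$ produces a holomorphic map $\Phi\colon Y\to\CC^{n}$, $n=\dim X$, with $\Phi^{-1}(0)=D$ and transverse vanishing, and the $L^{2}$-estimates let one impose in addition a bound $\log\lVert\Phi\rVert\le\chi$ on $X$ for a prescribed slowly growing function $\chi$ of the exhaustion. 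Restricting, $\Phi|_{X}$ detects $D$ inside $X$ with this controlled growth.

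\emph{Pulling back to $V$ and closing the loop.} Suppose $F\colon V\to X$ is non-degenerate with $F(V)\cap D=\varnothing$. First, $F$ must be unbounded in $\CC^{N}$: a bounded holomorphic function on the connected affine algebraic manifold $V$ is constant (restrict to a covering family of affine algebraic, hence parabolic, curves through a point), so a bounded $F$ would be constant, contradicting non-degeneracy. The composite $\Phi\circ F\colon V\to\CC^{n}\setminus\{0\}$ is holomorphic and, by transversality of $\Phi$ along $D$, still non-degenerate. Now the algebraicity of $V$ enters through $L^{2}$-estimates \emph{on $V$}: using the logarithmic-growth exhaustion $\rho_{V}$ one bounds a Nevanlinna-type characteristic of $\Phi\circ F$ in terms of $\chi$ and of a counting function for the (empty) fibre $(\Phi\circ F)^{-1}(0)$, pinning it below an explicit bound depending only on $\chi$ and on $V$. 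It then remains to have chosen $D$ dense enough---by the recursion, with the $r_{j}$ correspondingly small---that no non-degenerate map of characteristic below that bound whose image meets the $X_{k}$ can omit the value $0$: a Jensen/area estimate that uses the non-degeneracy of $F$ shows that such a map accumulates a definite amount of zero-counting against a sufficiently fine net, giving $(\Phi\circ F)^{-1}(0)\neq\varnothing$, i.e.\ the contradiction $F(V)\cap D\neq\varnothing$.

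\emph{Main obstacle.} The heart of the proof is the simultaneous calibration of two opposing demands on $D$: it must be dense enough near $\partial X$ to be unavoidable for \emph{every} non-degenerate $F$, yet sparse enough that the $L^{2}$-estimates yield a defining map $\Phi$ with the slow growth $\chi$ that the pullback argument can absorb---and all of this while keeping $D$ discrete and closed in the Stein envelope $Y$ and contained in $X$, which is exactly where the hypothesis that $X$ is only quasi-Stein, rather than Stein, complicates the bookkeeping. A naive route via Kobayashi hyperbolicity of $X\setminus D$ and restriction to lines, which would suffice for $V=\CC^{n}$, is unavailable here since $V$ need not contain any entire curves; replacing it by the algebraically-controlled $L^{2}$/value-distribution estimate above is the new ingredient, and this is precisely the ``growth restriction for $L^{2}$-estimates'' through which algebraicity makes its appearance. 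For $\dim X=1$ the whole scheme collapses to a single entire function $\Phi$ and the classical second main theorem on the parabolic surface $V$, which is the cleanest model for the general argument.
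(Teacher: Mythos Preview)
Your proposal has a genuine gap at the step where you claim to ``bound a Nevanlinna-type characteristic of $\Phi\circ F$ in terms of $\chi$ and of a counting function for the (empty) fibre $(\Phi\circ F)^{-1}(0)$, pinning it below an explicit bound depending only on $\chi$ and on $V$.'' The characteristic of $\Phi\circ F$ is governed by $\log^{+}\lVert\Phi(F(w))\rVert\le\chi(\lVert F(w)\rVert)$, and this depends on the growth of $F$ itself, which is \emph{a priori} completely unconstrained: a holomorphic $F\colon V\to X\subset\CC^{N}$ can grow arbitrarily fast. Nothing in your construction bounds $\lVert F\rVert$, so no bound on $T(r,\Phi\circ F)$ in terms of $\chi$ and $V$ alone is available; the First Main Theorem only exchanges the (vanishing) counting function for the proximity $\int\log^{+}(1/\lVert\Phi\circ F\rVert)$, which has no reason to be small either. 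The final ``Jensen/area estimate'' is then asked to do the whole job: you need $D$ so fine that every non-degenerate map into $X$ meets it, but that is the theorem itself, and omitting a single point of $\CC^{n}$ (codimension $n$) is far too weak for any Second-Main-Theorem mechanism to bite. Your ``Main obstacle'' paragraph correctly identifies the calibration problem but does not resolve it; the recursion you allude to is never specified.

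The paper closes exactly this gap by decoupling the two demands on $D$. First, a normal-families argument taken from Rosay--Rudin (Lemmas~\ref{lem-exhaust1} and~\ref{lem-exhaust2}) constructs $D$ as a union of finite sets on level shells, arranged so that any non-degenerate $F$ with $F(V)\cap D=\emptyset$ is \emph{forced} to satisfy $F(V_{t/2})\subset X_{t}$ for all large $t$; this is the missing growth control on $F$, obtained purely from compactness and with no $L^{2}$-input. Only then do $L^{2}$-estimates enter, and they are applied on the \emph{source} via the Ohsawa--Takegoshi extension from $V\cap r\udisc^{n}$ to $r\udisc^{n}$: the extension constant $C_{r}(V)$ grows polynomially in $r$ because $V$ is algebraic, and combined with the linear growth just obtained and Cauchy estimates this forces $F$ to be a polynomial map of bounded degree. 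The contradiction then comes from a device absent from your sketch: one runs the whole construction with $X$ replaced by the graph of an exponential over $X$, which admits no nonconstant polynomial maps. So your slogan---algebraicity enters as a growth restriction for $L^{2}$-estimates---matches the paper, but the architecture is inverted: the paper first manufactures the growth bound on $F$ from the placement of $D$ and uses $L^{2}$-extension on $V$, whereas you attempt to extract the bound after composing with a $\Phi$ built by $L^{2}$-methods on $Y$, and that composition loses the needed control.
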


This Theorem has already been established by Winkelmann \cite{Winkelmann01} for $V$ being an irreducible affine-algebraic variety and $X$ being weakly Stein.
We give an entirely different proof of this theorem following more closely the ideas of Rosay and Rudin \cite{RosayRudin} where $V = X = \CC^n$. In the final step of the proof we will apply the Ohsawa--Takegoshi $L^2$-Extension Theorem. The algebraicity of $V$ will only be needed at this point, in combination with the $L^2$- estimate.

\medskip

Let $X$ be a complex manifold with an exhaustion function $\eta_X \colon X \to [0, +\infty)$. For $\rho > 0$ we denote the sub-level sets of $\eta_X$ by
\[
X_\rho := \{ z \in X \,:\, \eta(z) < \rho \}
.\]

The following lemma is due to Rosay and Rudin \cite{RosayRudin}*{Lemma~4.3} for the special where $V$ and $X$ are complex-Euclidean spaces exhausted by balls. Their proof carries over to the general case.
\begin{lemma}
\label{lem-exhaust1}
Let $V$ and $X$ be complex varieties, $X$ quasi-Stein, with fixed exhaustion functions and fixed Riemannian metrics. Let $v_0 \in V$ and $x_0 \in X$ be in the $0$-level set of their respective exhaustion functions. Given $0 < a < b$ and $0 < r < s$ and $c \geq 0$, let $\Gamma$ be the class of all holomorphic mappings $F \colon V_{b} \to X_{s}$ such that $F(v_0) \in X_{r/2}$ and 
$\displaystyle \max_{\overline{V_{a}} \cap V_{\mathrm{reg}}} | d F | \geq c$.
Then there exists a finite set $E = E(a,b,r,s,c) \subset \boundary X_{r}$ with the following property:
\begin{equation}
\label{eqhitfinite}
F \in \Gamma \text{ and } F(V_{a}) \cap \boundary X_{r} \neq \emptyset \Longrightarrow F(V_{b}) \cap E \neq \emptyset
\end{equation}
\end{lemma}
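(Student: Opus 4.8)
The plan is to extract the set $E$ from a compactness argument on the class $\Gamma$, mimicking the Rosay--Rudin strategy but keeping only the topological and function-theoretic facts that survive the passage from complex-Euclidean space to an arbitrary quasi-Stein $X$. First I would observe that $\Gamma$, equipped with the topology of locally uniform convergence on $V_b$, is a normal family: every $F \in \Gamma$ maps into the relatively compact set $\overline{X_s}$, so by Montel's theorem (applied in local holomorphic charts on the manifold $V_b$, which is a complex manifold of finite dimension) every sequence in $\Gamma$ has a subsequence converging locally uniformly on $V_b$ to a holomorphic map $F_\infty \colon V_b \to \overline{X_s}$. The constraints $F(v_0) \in \overline{X_{r/2}}$ and $\max_{\overline{V_a} \cap V_{\mathrm{reg}}} |dF| \geq c$ both pass to the limit (the derivative bound because $dF_n \to dF_\infty$ uniformly on the compact set $\overline{V_a} \cap V_{\mathrm{reg}}$, using that this set is compact in the regular locus). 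Strictly speaking the limit map need only take values in $\overline{X_s}$ rather than $X_s$, but this is harmless for the argument below, and in fact one can shrink $s$ slightly at the outset so that $\Gamma$ is defined with $\overline{X_s}$ and the normal-family closure stays inside the larger ambient manifold.

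Next I would argue by contradiction to produce $E$. Suppose no finite subset of $\partial X_r$ works. Enumerate a countable dense subset $\{p_1, p_2, \dots\}$ of $\partial X_r$ (which exists since $\partial X_r$ is a separable metric space). For each $N$, the finite set $\{p_1, \dots, p_N\}$ fails to have property (\ref{eqhitfinite}), so there is $F_N \in \Gamma$ with $F_N(V_a) \cap \partial X_r \neq \emptyset$ but $F_N(V_b) \cap \{p_1,\dots,p_N\} = \emptyset$. Pass to a locally uniformly convergent subsequence $F_N \to F_\infty$. Pick $v_N \in \overline{V_a} \cap V_{\mathrm{reg}}$ — or more carefully a point in $V_a$ — with $F_N(v_N) \in \partial X_r$; by compactness of $\overline{V_a}$ we may assume $v_N \to v_\infty \in \overline{V_a}$, and then $F_\infty(v_\infty) \in \partial X_r$ by continuity, say $F_\infty(v_\infty) = q \in \partial X_r$. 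Since $\{p_j\}$ is dense in $\partial X_r$, choose $p_j$ with $p_j$ arbitrarily close to $q$; because $F_N \to F_\infty$ uniformly near $v_\infty$ and $v_N \to v_\infty$, the values $F_N(v_N)$ converge to $q$, so for large $N$ (with $N \geq j$) the point $F_N(v_N) \in F_N(V_b)$ lies as close to $p_j$ as we like. This does not yet contradict $p_j \notin F_N(V_b)$, so the argument needs one more ingredient.

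The missing ingredient — and the step I expect to be the main obstacle — is an \emph{openness} or \emph{stability} statement: one must ensure that if $F_\infty(V_b)$ actually \emph{covers} a neighborhood of $q$ in $X$, then so does $F_N(V_b)$ for large $N$, forcing $p_j \in F_N(V_b)$ and giving the contradiction. The derivative condition $\max_{\overline{V_a} \cap V_{\mathrm{reg}}} |dF_N| \geq c > 0$ is exactly what prevents $F_\infty$ from being constant on the relevant piece and, combined with the open mapping behaviour of non-degenerate holomorphic maps between equidimensional complex manifolds, should give that $F_\infty$ is open near a suitable point of $\overline{V_a}$ where $dF_\infty$ is an isomorphism; then Hurwitz-type / degree-theoretic stability of the image under locally uniform convergence (the argument principle in several variables, or a mapping-degree count on a small ball) yields that $F_N(V_b) \supset$ a fixed neighborhood of $q$ for all large $N$. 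Here one must be slightly careful when $c = 0$ is allowed, in which case the derivative condition is vacuous and one instead simply uses that $F_N(V_a) \cap \partial X_r \neq \emptyset$ together with non-degeneracy of each $F_N$ — but since the lemma only asserts existence of $E$ for the given fixed $c$, and the case $c=0$ can be reduced to a small positive $c$ by the non-degeneracy hypothesis invoked in the theorem, this is a technicality rather than a genuine difficulty. Once the stability step is in place, the contradiction is immediate and $E := \{p_1, \dots, p_{N_0}\}$ for a sufficiently large $N_0$ has property (\ref{eqhitfinite}); its finiteness is built into the construction.
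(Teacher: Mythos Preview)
Your overall strategy is exactly the paper's: assume no finite $E$ works, take finite sets $E_N = \{p_1,\dots,p_N\}$ with dense union in $\partial X_r$, extract a locally uniform limit $F_\infty$ of the offending $F_N$, and derive a contradiction from stability of the image under uniform convergence. The normality of $\Gamma$ and the limit extraction are handled correctly.

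The gap is precisely in your ``missing ingredient'' paragraph, and it is a missing idea rather than a technicality. You have produced two distinct points: a point $w \in \overline{V_a}$ where $dF_\infty$ is an isomorphism (from the bound $c>0$), and the point $v_\infty$ with $F_\infty(v_\infty) = q \in \partial X_r$. Hurwitz/degree stability applied at $w$ shows that $F_N(V_b)$ eventually contains a fixed neighborhood of $F_\infty(w)$ --- not of $q$ --- and there is no reason for $F_\infty(w)$ to lie anywhere near $\partial X_r$, so no contradiction follows. Conversely, at $v_\infty$ you know the value lies on $\partial X_r$, but $dF_\infty$ may well be singular there, so the local-biholomorphism stability argument is unavailable. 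The paper closes this gap with a connectedness argument you have not supplied: the full-rank locus $\Omega = \{z \in V_b : \rank d_z F_\infty = \dim V\}$ is nonempty by the derivative bound, hence open, connected, and dense in the connected $V_b$; consequently $F_\infty$ is non-degenerate and therefore an \emph{open} map on all of $V_b$. Now $F_\infty(v_0) \in \overline{X_{r/2}}$ while openness at $v_\infty$ forces $F_\infty(V_b)$ to contain points outside $\overline{X_r}$; the connected open set $F_\infty(\Omega)$, being dense in $F_\infty(V_b)$, must therefore meet both sides of $\partial X_r$ and hence cross it. This produces a \emph{full-rank} point $p \in \Omega$ with $F_\infty(p) \in \partial X_r$, and only now does the Hurwitz step go through: a fixed neighborhood $L$ of $F_\infty(p)$ lies in $F_N(V_b)$ for all large $N$, and $L \cap \partial X_r$ contains some $p_j$, contradicting $p_j \notin F_N(V_b)$ once $N \geq j$.
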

\begin{proof}
Without loss of generality we may assume that $V$ is connected and $X \subseteq \CC^N$ is a not necessarily closed subvariety.
Let $E_1 \subset E_2 \subset E_3 \subset \dots$ be increasing finite subsets of $\boundary X_{r}$ each of which fails to satisfy \eqref{eqhitfinite} in place of $E$, but whose union is dense in $\boundary X_{r}$. Hence, for each $j \in \NN$ there exist $F_j \in \Gamma$ and $z_j \in V_{a}$ such that $F(z_j) \in \boundary X_{r}$ and $F_j(V_{b}) \cap E_j =	 \emptyset$.

Since $\Gamma$ is a normal family and $\overline{V_{a}}$ is compact, we may, after passing to a subsequence, assume that $z_j \to z_0 \in \boundary V_{a}$ and $F_j \to F \in \Gamma$, uniformly on compacts of $X_{b}$.

We then have
\[
F(z_0) = \lim_{j \to \infty} F_j(z_j) \in \boundary X_r
\]
and since $F \in \Gamma$ we have a lower bound for the derivatives which implies that
\[
\Omega := \{ z \in V_b \,:\, \rank d_z F = \dim V \}
\]
is not empty. Hence $\Omega$ is a connected open set that is dense in $V_b$ (since the complement is of complex codimension at least one) and because $F$ is a continuous and open map, the set $F(\Omega)$ is connected, open and dense in $F(V_b)$.

Since $F$ is an open map with $F(z_0) \in \boundary X_r$ and $z_0 \in V_b$, the set $F(V_b)$ contains points outside $\overline{V_b}$, hence so does $F(\Omega)$. Due to $F(v_0) \in X_{r/2}$, the set $F(\Omega)$ must intersect $X_r$, and -- being connected -- also $\boundary X_r$.

We can now choose a point $p \in \Omega$ with $F(p) = q \in \boundary X_a$. Since $d_p F$ has rank $\dim V$, we find a compact neighborhood $K$ of $p$ inside local coordinates such that $F|K$ is a one-to-one map. For any relatively compact open subset $L \subset\subset F(K)$ there is an index $j_0$ such that $j \geq j_0$ implies that $F_j(K) \supseteq L$, hence $L \subseteq F_j(X_b)$. But this leads to a contradiction for large enough $j$, since $L$ contains points of $E_j$.
\end{proof}

\begin{lemma}
\label{lem-exhaust2}
Let $V$ and $X$ be complex varieties, X quasi-Stein, with fixed exhaustion functions and fixed Riemannian metrics. Let $v_0 \in V$ and $x_0 \in X$ be in the $0$-level set of their respective exhaustion functions.
Then for each $t > 0$ there exists a discrete set $E_t \subset X \setminus X_t$ such that the following assumptions
imply that $F(V_{t/2}) \subset X_t$:
\begin{enumerate}
\item $F \colon V_t \to X$ is holomorphic
\item $F(v_0) \in X_{t/2}$
\item $\max\limits_{\overline{V_{t/2}} \cap V_{\mathrm{reg}}} |d_z F| \geq 1/t$
\item $F(V_t) \cap E_t = \emptyset$
\end{enumerate}
\end{lemma}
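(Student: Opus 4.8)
The plan is to build $E_t$ by applying Lemma~\ref{lem-exhaust1} infinitely often along a sequence of level sets of $\eta_X$ that escapes to infinity, so that discreteness of $E_t$ is automatic, and then to show that any $F$ satisfying (1)--(4) with $F(V_{t/2})\not\subset X_t$ is caught on one of those level sets, contradicting (4).

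Concretely, I would fix an increasing sequence $\rho_1 = t < \rho_2 < \rho_3 < \cdots$ with $\rho_j \to \infty$ and an increasing sequence $t/2 < a_1 < b_1 < a_2 < b_2 < \cdots$ with $a_j, b_j \to t$. For each $j$, let $E_j \subset \boundary X_{\rho_j}$ be the finite set produced by Lemma~\ref{lem-exhaust1} for the parameters $a = a_j$, $b = b_j$, $r = \rho_j$, $s = \rho_{j+1}$ and derivative threshold $c = 1/t$, and put $E_t := \bigcup_{j \geq 1} E_j$. Since $E_j \subset \boundary X_{\rho_j}$ and $\rho_j \to \infty$, every sublevel set $\overline{X_R}$ meets only the finitely many $E_j$ with $\rho_j \leq R$, so $E_t$ is closed and discrete in $X$; and $\rho_j \geq t$ forces $\boundary X_{\rho_j} \subset X \setminus X_t$, hence $E_t \subset X \setminus X_t$.

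Now assume $F$ satisfies (1)--(4) but $F(V_{t/2}) \not\subset X_t$, and set $\phi(\sigma) := \max_{\overline{V_\sigma}} \eta_X \circ F$ for $\sigma \in (t/2, t)$; this is finite, since $\overline{V_\sigma}$ is a compact subset of $V_t$, and non-decreasing. The failure of $F(V_{t/2}) \subset X_t$ gives $\phi(t/2) \geq t = \rho_1$, and the intermediate value theorem for $\eta_X \circ F$ along paths issuing from $v_0$ (stopping at the first point where $\eta_X \circ F$ attains a prescribed value) shows that $F(V_\sigma)$ meets $\boundary X_\rho$ whenever $t \leq \rho \leq \phi(\sigma)$. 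Since $a_j > t/2$ we have $\overline{V_{t/2}} \subset \overline{V_{a_j}}$, so (3) upgrades to $\max_{\overline{V_{a_j}} \cap V_{\mathrm{reg}}} |d_z F| \geq 1/t$, and (2) together with $\rho_j \geq t$ gives $F(v_0) \in X_{t/2} \subset X_{\rho_j/2}$; hence at stage $j$ the only hypotheses of Lemma~\ref{lem-exhaust1} that are not automatic are $\rho_j \leq \phi(a_j)$ (which yields $F(V_{a_j}) \cap \boundary X_{\rho_j} \neq \emptyset$) and $\phi(b_j) < \rho_{j+1}$ (which yields $F(V_{b_j}) \subset X_{\rho_{j+1}}$). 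If some $j$ satisfies both, Lemma~\ref{lem-exhaust1} gives $F(V_{b_j}) \cap E_j \neq \emptyset$, and since $b_j < t$ this contradicts (4). To produce such a $j$ one telescopes: $\phi(a_1) \geq \phi(t/2) \geq \rho_1$, and if stage $j$ fails only because $\phi(b_j) \geq \rho_{j+1}$, then, having arranged $a_{j+1} \geq b_j$, also $\phi(a_{j+1}) \geq \phi(b_j) \geq \rho_{j+1}$, so the first inequality holds again at stage $j+1$. Thus either some stage succeeds, or $\phi(b_j) \geq \rho_{j+1}$ for every $j$.

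The remaining, and genuinely delicate, possibility is exactly this last one: $\phi(b_j) \geq \rho_{j+1} \to \infty$, i.e.\ $\eta_X \circ F$ grows faster than the whole sequence $(\rho_j)$ along radii tending to $t$, so that $F$ is unbounded on $V_t$. This is the main obstacle, and it is where the construction must be refined: one should let $b_j \to t$ and $\rho_{j+1}$ grow fast enough, and run a diagonal book-keeping in which, for a fixed level $\rho_j$, the outer radius $s$ in Lemma~\ref{lem-exhaust1} is also allowed to range through the values $\rho_m$ with $m > j$ (using that the finite set produced by Lemma~\ref{lem-exhaust1} remains effective when the source radius $b$ is enlarged and the outer radius $s$ is decreased), so that no holomorphic map on $V_t$ obeying (2) and (3) can simultaneously outrun every level set $\boundary X_{\rho_j}$ while keeping $E_t$ closed and discrete. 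Carrying this out uniformly is the technical heart of the proof; once it is done, everything else is a mechanical verification of the hypotheses of Lemma~\ref{lem-exhaust1}.
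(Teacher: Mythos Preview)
Your overall strategy is correct, but the ``genuinely delicate'' case you identify is a phantom created by an unnecessary choice in your own setup. You stipulate that $a_j, b_j \to t$, and this is precisely what allows $\phi(b_j)$ to be unbounded. If instead you keep all the $b_j$ bounded strictly below $t$, say $b_j < \tfrac{3}{4}t$ (there is no obstruction to nesting infinitely many pairs $t/2 < a_j < b_j$ inside $(t/2, \tfrac{3}{4}t)$), then $\overline{V_{b_j}} \subset \overline{V_{3t/4}}$, which is a compact subset of $V_t$. Hence $F(\overline{V_{3t/4}})$ is compact in $X$, and
\[
\sup_j \phi(b_j) \;\leq\; \max_{\overline{V_{3t/4}}} \eta_X \circ F \;<\; \infty.
\]
This immediately rules out the possibility $\phi(b_j) \geq \rho_{j+1} \to \infty$, and your telescoping argument then terminates in finitely many steps with no diagonal book-keeping whatsoever.

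The paper's proof makes exactly this choice: it takes $\tfrac{1}{2}t = a_1 < a_2 < \cdots < \tfrac{3}{4}t$ and $t = r_1 < r_2 < \cdots \to \infty$, using $(a_j, a_{j+1})$ in the role of your $(a_j, b_j)$. It then organizes the induction in the opposite direction: by the compactness observation above there is some $j$ with $F(V_{a_{j+1}}) \subset X_{r_{j+1}}$, and one descends step by step via the contrapositive of Lemma~\ref{lem-exhaust1} to $F(V_{a_1}) = F(V_{t/2}) \subset X_{r_1} = X_t$. Once the inner radii are bounded away from $t$, this descending argument and your ascending contradiction are two sides of the same coin; the proof is short and entirely mechanical, and the refinement you sketch in your last paragraph is not needed.
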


\begin{proof}
Choose sequences $a_j \in \RR$ and $r_j \in \RR$ such that
\[ \frac{1}{2} t = a_1 < a_2 < \dots < \frac{3}{4}t \text{ and } t = r_1 < r_2 < \dots \]
and $\lim_{j \to \infty} r_j = \infty$.
Using the preceding Lemma \ref{lem-exhaust1} and its notation, define
\[
E_t := \bigcup_{j=1}^\infty E(a_j,a_{j+1},r_j,r_{j+1},1/t)
\]
It is easy to see that $E_t$ is closed and discrete.
Now assume that a map $F$ satisfying the properties above is given.
Clearly, $F(V_{a_{j+1}}) \subset X_{r_{j+1}}$ for some $j \in \NN$.
By assumption, $F(V_{a_{j+1}})$ does not intersect $E(a_j,a_{j+1},r_j,r_{j+1},1/t)$ and the application of the preceding Lemma shows that hence $F(V_{a_j})$ does not intersect $\boundary X_{r_j}$. This implies that $F(V_{a_j}) \subseteq X_{r_j}$.

We can now replace $X$ by $X_{r_j}$ and $V$ by $V_{a_j}$ and repeat the last step until we reach $F(V_{a_1}) \subseteq X_{r_1}$.
\end{proof}

We are now ready for the proof of the Theorem.

\begin{proof}[Proof of Theorem \ref{thm-unavoid}]
We apply Lemma \ref{lem-exhaust2} and define the set $D := \bigcup_{t \in \NN} E_t$. Clearly, $D$ is closed and discrete. For large enough $t$, the non-degenerate holomorphic map $F \colon V \to X$ must satisfy $F(V_{t/2}) \subseteq X_t$. Since $X$ is a (not necessarily closed) complex subvariety of some complex-Euclidean space, we can consider $F$ as vector of functions on $V$. Because $V \subseteq \CC^n$ is a complex subvariety of some $\CC^n$, we can extend $F$ from $V \cap r \udisc^n$ holomorphically to $\hat{F}_r$ on $r \udisc^n$ with $L^2$-estimates. Using Cauchy-estimates for the derivatives, we obtain:
\begin{align*}
\left| \frac{\partial^{|\alpha|} \hat F_r}{\partial z^\alpha}(0) \right| &\leq \frac{1}{(2\pi)^n} \frac{(a+2)!}{(\alpha + 1)^{(1,\dots,1)}} \frac{1}{r^{|\alpha| + 2n}} \int_{(r\udisc)^n} |\hat F_r| d\mathcal{L}^n \\
&\leq \frac{1}{(2\sqrt{\pi})^n} \frac{(a+2)!}{(\alpha + 1)^{(1,\dots,1)}} \frac{1}{r^{|\alpha| + n}} \left\| \hat F_r \right\|_{L^2(r\udisc^n)} \\
&\leq \frac{1}{(2\sqrt{\pi})^n} \frac{(a+2)!}{(\alpha + 1)^{(1,\dots,1)}} \frac{1}{r^{|\alpha| + n}} \cdot C_r(V) \cdot \left\| F \right\|_{L^2(r\udisc^n \cap V)} \\
&\leq \frac{1}{(2\sqrt{\pi})^n} \frac{(a+2)!}{(\alpha + 1)^{(1,\dots,1)}} \frac{1}{r^{|\alpha|}} \cdot C_r(V) \cdot 2^n
\end{align*}
where the last inequality holds for $r > 0$ large enough and if we assume that the exhaustion functions on $V$ and $X$ were chosen as restriction of the Euclidean norm of the ambient space.
The constant $C_r(V)$ is provided by the Ohsawa--Takegoshi $L^2$-Extension theorem and depends on the defining equations and grows polynomially in $r > 0$, since the defining equations of $V$ are polynomial. For a multi-index $\alpha \in \NN_0^n$, we obtain in the limit $r \to \infty$ that \[
 \frac{\partial^{|\alpha|} \hat F_r}{\partial z^\alpha}(0) \to 0 \qquad \text{ for } |\alpha| \geq \deg_r C_r(V) + 1,
\]
hence the higher derivatives of $F$ restricted to the tangent directions of $V$ must vanish and so $F$ is a polynomial map of degree at most $\deg_r C_r(V)$.

The growth rate does not depend on the defining equations of $X$, hence we may replace $X \subseteq \CC^m$ by the graph of an exponential function over $X$, i.e.\ $\{ (x,y) \in \CC^{m} \times \CC \,:\, y = \exp(x) \}$ for the whole construction. This excludes the existence of any such non-trivial polynomial maps and hence concludes the proof.
\end{proof}

\section{Open Questions}

\begin{question}
Let $X$ be a Stein manifold with the density property. Is a weakly tame set always (strongly) tame?
\end{question}

The following question has been asked for $X = \CC^n$ already by Rosay and Rudin \cite{RosayRudin}.

\begin{question}
Let $X$ be a Stein manifold  with the density property. Let $Z \subset X$ be a subset such that every bijection $Z \to Z$ is the restriction of a holomorphic automorphism of $X$. Is $Z$ tame?
\end{question}

Next, consider the following observation:
\begin{remark}
Let $X$ be a complex manifold that contains a tame set. Then $\aut(X)$ is infinite dimensional.
\end{remark}
\begin{proof}
Let $\dim X = n$. If a group $G$ acts transitively on $m$-tupels of points from $X$,
then $\dim G \geq m \cdot n$, since $G$ acts transitively on $X^m \setminus \mathrm{diag}(X^m)$, i.e.\ the configuration space of $m$-tupels of points from $X$, which has dimension $m \cdot n$.
\end{proof}

Typical examples of spaces with an infinite dimensional automorphism groups are the so-called flexible manifolds which comprise also the Stein manifolds with the density property.

\begin{question}
Let $X$ be a Stein manifold that contains a tame set and such that $\aut(X)$ acts transitively on $X$. Is $X$ holomorphically flexible? Does $X$ enjoy the density property?
\end{question}

Note that ``tame'' is not an algebraic condition, but nevertheless the same question can be asked also in a semi-algebraic context:

\begin{question}
Let $X$ be an affine-algebraic manifold that contains a tame set and such that $\aut_{\mathrm{alg}}(X)$ acts transitively on $X$. Is $X$ algebraically flexible? Does $X$ enjoy the (algebraic) density property?
\end{question}

In Stein manifolds with the density property there is at most one tame set up to equivalence. It is also easy to provide examples that admit no tame sets, yet the following question remains open:

\begin{question}
Is there a complex manifold admitting non-equivalent tame sets?
\end{question}

\begin{question}
Let $X$ be a Stein manifold with the density property and let $Z \subset X$ be a tame set. Is $X \setminus Z$ an Oka manifold?
\end{question}

It is well-known that $\CC^n \setminus (\NN \times \{0\}^{n-1})$ and hence the complement of any tame set in $\CC^n$, is an Oka manifold for $n \geq 2$, see \cite{FrancBook}*{Prop.~5.6.17}. We show that the question has an affirmative answer at least for another example.

\begin{example}
The manifold $\slgrp_2(\CC) \setminus Z$ where $Z$ is a tame set, is Oka.
Since $\slgrp_2(\CC)$ has the density property, it is suffient to show this for one particular tame set. We choose
\[
Z := \left\{ \begin{pmatrix} 1 - k & k \\ -k & 1 + k \end{pmatrix} \;:\; k \in \NN \; \text{prime} \right\}
\]
which is tame according to the proof of Lemma \ref{seq1}.
We consider the the following vector fields that arise from left-multiplication and hence are nowhere vanishing and complete, see also the proof of Lemma \ref{seq2}. Moreover, they span the tangent space of $\slgrp_2(\CC)$ in each point.
\begin{align*}
A &= c \frac{\partial}{\partial a} + d \frac{\partial}{\partial b} \\
B &= a \frac{\partial}{\partial c} + b \frac{\partial}{\partial d} \\
[A,B] &= - a \frac{\partial}{\partial a} - b \frac{\partial}{\partial b} + c \frac{\partial}{\partial c} - d \frac{\partial}{\partial d} 
\end{align*}
Also note that $c, d \in \ker A$, $a, b \in \ker B$ and $ac, ad, bc, bd \in \ker[A,B]$.
By the Mittag-Leffler Theorem we find holomorphic functions $\alpha(a)$, $\beta(b)$, $\gamma(c)$, $\delta(d)$ such that the complete vector fields $\gamma A, \delta A, \alpha B, \beta B$ vanish exactly in
\[
\widetilde{Z} := \left\{ \begin{pmatrix} 1 - k_1 & k_2 \\ -k_3 & 1 + k_4 \end{pmatrix} ;\:\; k_1, \dots, k_4 \in \NN \; \text{prime} \right\}
\]
Next we arrange for those integers $k_1, \dots, k_4$ to be equal: choose a holomorphic function $\varepsilon(-bc)$ such that it vanishes only for squares of primes. Then $\varepsilon(-bc) \cdot [A,B]|\widetilde Z$ will vanish only when $k_2 = k_3 =: k$ . Similarly we can also find a holomorphic function $\zeta(bd)$ to force $k_4 = k$ using $\zeta(bd) \cdot [A, B]$. In $\slgrp_2(\CC)$ this implies $k_1 = k$ as well.
We have found six complete vector fields on $\slgrp_2(\CC)$ whose common zero locus is exactly $Z$. 

\smallskip

In each point where all coordinates are non-integers, these vector fields will span the tangent space. Let $p \in \slgrp_2(\CC) \setminus Z$ be a point where one coordinate is an integer. Then at least one of the above-mentioned vector fields is non-vanishing. Following its and perhaps others' flow for an arbitrarily short time, we will eventually reach a point $q$ with all non-integer coordinates. Hence, the pull-backs by this flow of the vector fields spanning in $q$ will span in $p$. An infinitesimal pull-back is a Lie derivative, hence we just form all possible Lie brackets ($15$) of these six vector fields to be on the safe side.

Hence $\slgrp_2(\CC) \setminus Z$ is elliptic in the sense of Gromov and therefore an Oka manifold. Moreover, it is holomorphically flexible in the sense of Arzhantsev~et~al.
\end{example}

\begin{bibdiv}
\begin{biblist}

\bib{basicLie}{book}{
   author={Abbaspour, Hossein},
   author={Moskowitz, Martin},
   title={Basic Lie theory},
   publisher={World Scientific Publishing Co. Pte. Ltd., Hackensack, NJ},
   date={2007},
   pages={xvi+427},
   isbn={978-981-270-699-7},
   isbn={981-270-669-2},
   review={\MR{2364699}},
}

\bib{Embedded}{article}{
   author={Andrist, Rafael},
   author={Forstneri\v c, Franc},
   author={Ritter, Tyson},
   author={Wold, Erlend Forn\ae ss},
   title={Proper holomorphic embeddings into Stein manifolds with the
   density property},
   journal={J. Anal. Math.},
   volume={130},
   date={2016},
   pages={135--150},
   issn={0021-7670},
   review={\MR{3574650}},
}

\bib{AndristSurfaces}{article}{
   author={Andrist, Rafael},
   title={The density property for Gizatullin surfaces with reduced degenerate fibre.},
   date={2017},
   journal={J. Geom. Anal.},
   eprint={arXiv:1701.03373},
   doi={10.1007/s12220-017-9916-y},
}

\bib{FactorialQuotient2013}{article}{
    AUTHOR = {Arzhantsev, Ivan V.},
    AUTHOR ={Celik, Devrim},
    AUTHOR ={Hausen, J\"urgen},
     TITLE = {Factorial algebraic group actions and categorical quotients},
   JOURNAL = {J. Algebra},
    VOLUME = {387},
      YEAR = {2013},
     PAGES = {87--98},
      ISSN = {0021-8693},
       URL = {https://doi.org/10.1016/j.jalgebra.2013.04.018},
}

\bib{ArzAl2013}{article}{
    AUTHOR = {Arzhantsev, I.},
    AUTHOR = {Flenner, H.},
    AUTHOR = {Kaliman, S.},
    AUTHOR = {Kutzschebauch, F.},
    AUTHOR = {Zaidenberg, M.},
     TITLE = {Flexible varieties and automorphism groups},
   JOURNAL = {Duke Math. J.},
    VOLUME = {162},
      YEAR = {2013},
    NUMBER = {4},
     PAGES = {767--823},
      ISSN = {0012-7094},
       URL = {https://doi.org/10.1215/00127094-2080132},
}

\bib{ForstnericActions}{article}{
   author={Forstneric, Franc},
   title={Actions of $(\mathbf R,+)$ and $(\mathbf C,+)$ on complex manifolds},
   journal={Math. Z.},
   volume={223},
   date={1996},
   number={1},
   pages={123--153},
   issn={0025-5874},
   review={\MR{1408866}},
}

\bib{Forstneric1999}{article}{
   author={Forstneric, Franc},
   title={Interpolation by holomorphic automorphisms and embeddings in ${\bf
   C}^n$},
   journal={J. Geom. Anal.},
   volume={9},
   date={1999},
   number={1},
   pages={93--117},
   issn={1050-6926},
   review={\MR{1760722}},
}

\bib{FrancBook}{book}{
    author={Forstneri\v{c}, Franc},
    title={Stein manifolds and holomorphic mappings. The homotopy principle in complex analysis. Second Edition},
    publisher={Springer, Berlin},
    year={2017},
    pages={xvii+571},
    isbn={978-3-319-61057-3/hbk},
    isbn={978-3-319-61058-0/ebook},
    doi={10.1007/978-3-319-61058-0},
}

\bib{LND}{book}{
   author={Freudenburg, Gene},
   title={Algebraic theory of locally nilpotent derivations},
   series={Encyclopaedia of Mathematical Sciences},
   volume={136},
   publisher={Springer-Verlag, Berlin},
   date={2006},
   pages={xii+261},
   isbn={978-3-540-29521-1},
   isbn={3-540-29521-6},
   review={\MR{2259515}},
}

\bib{KK-Criteria}{article}{
   author={Kaliman, Shulim},
   author={Kutzschebauch, Frank},
   title={Algebraic volume density property of affine algebraic manifolds},
   journal={Invent. Math.},
   volume={181},
   date={2010},
   number={3},
   pages={605--647},
   issn={0020-9910},
   review={\MR{2660454}},
}

\bib{PresentState}{article}{
   author={Kaliman, Shulim},
   author={Kutzschebauch, Frank},
   title={On the present state of the Anders\'en-Lempert theory},
   conference={
      title={Affine algebraic geometry},
   },
   book={
      series={CRM Proc. Lecture Notes},
      volume={54},
      publisher={Amer. Math. Soc., Providence, RI},
   },
   date={2011},
   pages={85--122},
   review={\MR{2768636}},
}

\bib{Kolaric2009}{article}{
    AUTHOR = {Kolari\v c, Dejan},
     TITLE = {Tame sets in the complement of algebraic variety},
   JOURNAL = {J. Geom. Anal.},
    VOLUME = {19},
      YEAR = {2009},
    NUMBER = {4},
     PAGES = {847--863},
       URL = {https://doi.org/10.1007/s12220-009-9089-4},
}

\bib{Leuenberger}{article}{
   author={Leuenberger, Matthias},
   title={(Volume) density property of a family of complex manifolds
   including the Koras-Russell cubic threefold},
   journal={Proc. Amer. Math. Soc.},
   volume={144},
   date={2016},
   number={9},
   pages={3887--3902},
   issn={0002-9939},
   review={\MR{3513546}},
}

\bib{MakarLimanov}{article}{
   author={Makar-Limanov, L.},
   title={On the hypersurface $x+x^2y+z^2+t^3=0$ in ${\bf C}^4$ or a ${\bf
   C}^3$-like threefold which is not ${\bf C}^3$},
   journal={Israel J. Math.},
   volume={96},
   date={1996},
   number={part B},
   part={part B},
   pages={419--429},
   issn={0021-2172},
   review={\MR{1433698}},
}

\bib{Mostow}{article}{
   author={Mostow, G. D.},
   title={Fully reducible subgroups of algebraic groups},
   journal={Amer. J. Math.},
   volume={78},
   date={1956},
   pages={200--221},
   issn={0002-9327},
   review={\MR{0092928}},
}

\bib{RosayRudin}{article}{
   author={Rosay, Jean-Pierre},
   author={Rudin, Walter},
   title={Holomorphic maps from ${\bf C}^n$ to ${\bf C}^n$},
   journal={Trans. Amer. Math. Soc.},
   volume={310},
   date={1988},
   number={1},
   pages={47--86},
   issn={0002-9947},
   review={\MR{929658}},
   doi={10.2307/2001110},
}

\bib {Rosenlicht}{article}{
    AUTHOR = {Rosenlicht, Maxwell},
     TITLE = {Some basic theorems on algebraic groups},
   JOURNAL = {Amer. J. Math.},
    VOLUME = {78},
      YEAR = {1956},
     PAGES = {401--443},
      ISSN = {0002-9327},
 review = {\MR{0082183}},
}

\bib{TothVarolin2000}{article}{
   author={Toth, Arpad},
   author={Varolin, Dror},
   title={Holomorphic diffeomorphisms of complex semisimple Lie groups},
   journal={Invent. Math.},
   volume={139},
   date={2000},
   number={2},
   pages={351--369},
   issn={0020-9910},
   review={\MR{1738449}},
}

\bib{TothVarolin2006}{article}{
   author={T\'oth, \'Arp\'ad},
   author={Varolin, Dror},
   title={Holomorphic diffeomorphisms of semisimple homogeneous spaces},
   journal={Compos. Math.},
   volume={142},
   date={2006},
   number={5},
   pages={1308--1326},
   issn={0010-437X},
   review={\MR{2264667}},
}

\bib{Varolin2}{article}{
   author={Varolin, Dror},
   title={The density property for complex manifolds and geometric
   structures},
   journal={J. Geom. Anal.},
   volume={11},
   date={2001},
   number={1},
   pages={135--160},
   issn={1050-6926},
   review={\MR{1829353}},
   doi={10.1007/BF02921959},
}

\bib{Winkelmann01}{article}{
   author={Winkelmann, J{\"o}rg},
   title={Large discrete sets in Stein manifolds},
   journal={Math. Z.},
   volume={236},
   date={2001},
   number={4},
   pages={883--901},
   issn={0025-5874},
   review={\MR{1827508}},
   doi={10.1007/PL00004855},
}

\bib {Winkelmann03} {article}{
    AUTHOR = {Winkelmann, J{\"o}rg},
     TITLE = {Invariant rings and quasiaffine quotients},
   JOURNAL = {Math. Z.},
    VOLUME = {244},
      YEAR = {2003},
    NUMBER = {1},
     PAGES = {163--174},
      ISSN = {0025-5874},
   review = {\MR{1981881}},
       DOI = {10.1007/s00209-002-0484-9},      
}

\bib{Winkelmann2008}{article}{
    AUTHOR = {Winkelmann, J{\"o}rg},
     TITLE = {On tameness and growth conditions},
   JOURNAL = {Doc. Math.},
    VOLUME = {13},
      YEAR = {2008},
     PAGES = {97--101},
      ISSN = {1431-0635},
 
}

\bib{WinkelmannTame}{article}{
   author = {Winkelmann, J{\"o}rg},
    title = {Tame discrete subsets in Stein manifolds},
   eprint = {arXiv:1708.02802},
     year = {2017},
}

\end{biblist}
\end{bibdiv}

\end{document}